\theoremstyle{plain}
\newtheorem{thm}{Theorem}[section]
\newtheorem{lem}[thm]{Lemma}
\newtheorem{prop}[thm]{Proposition}
\newtheorem{cor}[thm]{Corollary}
\theoremstyle{definition}
\newtheorem{defn}[thm]{Definition}
\newtheorem{rem}[thm]{Remark}
\numberwithin{equation}{section}
\def\rightharpoonupfill@{\arrowfill@\relbar\relbar\rightharpoonup}
\newcommand{\xrightharpoonup}[2][]{\ext@arrow 0359\rightharpoonupfill@{#1}{#2}}
\newcommand{\ds}{\displaystyle}
\renewcommand{\Cap}{{\rm Cap}}
\newcommand{\CC}{\mathbb C} 
\newcommand{\R}{\mathbb R} 
\newcommand{\Rn}{{\mathbb R}^n} 
\newcommand{\Ms}{{\mathbb M}^{2{\times}2}_{\rm sym}}
\newcommand{\dist}{{\rm dist}}
\renewcommand{\div}{{\rm div}\,}
\newcommand{\wto}{\rightharpoonup}
\newcommand{\e}{\varepsilon}
\newcommand{\A}{{\mathcal A}}
\newcommand{\AR}{{\mathcal A}_{r}}
\newcommand{\Q}{{\mathcal Q}}
\newcommand{\LL}{{\mathcal L}}
\newcommand{\HH}{{\mathcal H}}
\newcommand{\M}{{\mathcal M}}
\newcommand{\C}{{\mathcal C}}
\newcommand{\K}{{\mathcal K}}
\newcommand{\Dc}{\beta}
\let\O=\Omega
\begin{document}
 
\title[Stress regularity in quasi-static perfect plasticity]{Stress regularity in quasi-static perfect plasticity with a pressure dependent yield criterion}
\author[J.-F. Babadjian]{Jean-Fran\c cois Babadjian}
\author[M.G. Mora]{Maria Giovanna Mora}

\address[J.-F. Babadjian]{Sorbonne Universit\'es, UPMC Univ Paris 06, CNRS, UMR 7598, Laboratoire Jacques-Louis Lions, F-75005, Paris, France}
\email{jean-francois.babadjian@upmc.fr}

\address[M.G. Mora]{Dipartimento di Matematica, Universit\`a di Pavia, via Ferrata 1, 27100 Pavia, Italy}
\email{mariagiovanna.mora@unipv.it}

\date{\today}

\subjclass{}
\keywords{Elasto-plasticity, Convex analysis, Quasi-static evolution, Regularity, Functions of bounded deformation, Capacity}

\begin{abstract}
This work is devoted to establishing a regularity result for the stress tensor in quasi-static planar isotropic linearly elastic -- perfectly plastic materials obeying a Drucker-Prager or Mohr-Coulomb yield criterion. Under suitable assumptions on the data, it is proved that the stress tensor has a spatial gradient that is locally squared integrable. As a corollary, the usual measure theoretical flow rule is expressed in a strong form using the quasi-continuous representative of the stress.
\end{abstract}

\maketitle

%\tableofcontents

\section{Introduction}

Perfect plasticity is a class of models in continuum solid mechanics involving a fixed threshold criterion on the Cauchy stress. When the stress is below a critical value, the underlying material behaves elastically, while the saturation of the constraint leads to permanent deformations after unloading back to a stress-free configuration. Elasto-plasticity represents a typical inelastic behavior, whose evolution is described by means of an internal variable, the plastic strain.\medskip

To formulate more precisely the problem, let us consider a bounded open set $\O \subset \R^n$ (in the following, only the dimension $n=2$ will be considered), 
which stands for the reference configuration of an elasto-plastic body. In the framework of small strain elasto-plasticity the natural kinematic and static variables are the displacement field $u :\O \times [0,T] \to \Rn$ and the stress tensor $\sigma:\O \times [0,T] \to {\mathbb M}^{n{\times}n}_{\rm sym}$, where  ${\mathbb M}^{n{\times}n}_{\rm sym}$ is the set of $n \times n$ symmetric matrices. In quasi-statics the equilibrium is described by the system of equations
$$
- \div\sigma= f \quad \text{in }\O \times [0,T],
$$
for some given body loads $f:\O \times [0,T] \to \R^n$. Perfect plasticity is characterized by the existence of a yield zone in  which the stress is constrained to remain. The stress tensor must indeed belong to a given closed and convex subset $K$ of ${\mathbb M}^{n{\times}n}_{\rm sym}$ with non empty interior:
$$
\sigma \in K.
$$
If $\sigma$ lies inside the interior of $K$, the material behaves elastically, so that unloading will bring the body back to its initial configuration. On the other hand, if $\sigma$ reaches the boundary of $K$ (called the yield surface), a plastic flow may develop, so that, after unloading, a non-trivial permanent plastic strain will remain. The total linearized strain, denoted by  $Eu:=(Du+Du^T)/2$, is thus additively decomposed as
$$Eu=e+p.$$
The elastic strain $e:\O \times [0,T] \to {\mathbb M}^{n{\times}n}_{\rm sym}$ is  related to the stress through the usual Hooke's law 
$$\sigma:=\CC e,$$
where $\CC$ is the symmetric fourth order elasticity tensor. The evolution of the plastic strain $p:\O \times [0,T] \to {\mathbb M}^{n{\times}n}_{\rm sym}$ is described by means of the flow rule
\begin{equation}\label{eq:diff-inc}
\dot p \in N_K(\sigma),
\end{equation}
where $N_K(\sigma)$ is the normal cone to $K$ at $\sigma$. From convex analysis, $N_K(\sigma)=\partial I_K(\sigma)$, {\it i.e.}, 
it coincides with the subdifferential of the indicator function $I_K$ of the set $K$ (where $I_K(\sigma)=0$ if $\sigma \in K$, while $I_K(\sigma)=+\infty$ otherwise). Hence, from convex duality, the flow rule can be equivalently written as
\begin{equation}\label{eq:Hill}
\sigma{\,:\,}\dot p=\max_{\tau \in K}\tau{\,:\,}\dot p=:H(\dot p),
\end{equation}
where $H :{\mathbb M}^{n{\times}n}_{\rm sym} \to [0,+\infty]$ is the support function of $K$. This last formulation \eqref{eq:Hill} is nothing but Hill's principle of maximum plastic work, and $H(\dot p)$ denotes the plastic dissipation.\medskip

Standard models used for most of metals or alloys are those of Von Mises and Tresca. These kinds of materials are not sensitive to hydrostatic pressure, and plastic behavior is only generated  through critical shearing stresses. In these models, if $\sigma_D:=\sigma-\frac{{\rm tr}\sigma}{n} {\rm Id}$ stands for the deviatoric stress, the elasticity set $K$ is of the form
$$\{\sigma \in {\mathbb M}^{n{\times}n}_{\rm sym}: \kappa(\sigma_D) \leq k\},$$
where $k>0$. The Von Mises yield criterion corresponds to $\kappa(\sigma_D)=|\sigma_D|$, while that of Tresca to $\kappa(\sigma_D)=\sigma_n - \sigma_1$ where $\sigma_1 \leq \cdots \leq \sigma_n$ are the ordered principal stresses. The mathematical analysis of such models has been performed in \cite{Suquet,T,A,DMDSM}.

On the other hand, in the context of soil mechanics, materials such as sand or concrete turn out to develop permanent volumetric changes due to hydrostatic pressure. Typical models are those of Drucker-Prager and Mohr-Coulomb (see \cite{DP}), which can be seen as generalizations of the Von Mises and Tresca models, respectively. In these cases, the elasticity set $K$ takes the form
$$\{\sigma\in {\mathbb M}^{n{\times}n}_{\rm sym}: \kappa(\sigma_D) + \alpha {\rm tr}\sigma \leq k\},$$
where $\alpha,k>0$. The main difference between metals and soils, is that for the latter, there are in general no directions along which the Cauchy stress is bounded. These models have been studied in \cite{BM} (see also \cite{ReSe}).\medskip

A common feature to all the models of perfect plasticity described so far is that  they develop strain concentration leading to discontinuities of the displacement field. This has been a major difficulty in defining a suitable functional framework for the study of such problems. It has been overcome by the introduction of the space $BD$ of functions of bounded deformation (see \cite{Suquet3,T})
and through a suitable relaxation procedure (see \cite{AG, Mo}). Solutions in the energy space must at least satisfy the following regularity: for all $t \in [0,T]$,
$$u(t) \in BD(\O), \quad e(t),\; \sigma(t) \in L^2(\O;{\mathbb M}^{n{\times}n}_{\rm sym}), \quad p(t) \in \M(\overline \O;{\mathbb M}^{n{\times}n}_{\rm sym}),$$
where $\M(\overline \O;{\mathbb M}^{n{\times}n}_{\rm sym})$ stands for the space of bounded Radon measures in $\overline \O$. 

Higher regularity of solutions appears therefore as a natural question. For dynamical problems it has (only recently) been established in \cite{M} that for any elasticity set $K$ the solutions are smooth in short time, provided the data are smooth and compactly supported in space. Such a result does not hold in the static or quasi-static cases (see the examples in \cite[Section~2]{Suquet} or \cite[Section~10]{Dem}). However, some partial regularity results are available for the stress in some particular situations. Indeed, it has been proved in \cite{BF,S1,Dem,DM} that for a Von Mises elasticity set, the Cauchy stress satisfies
$$\sigma \in L^\infty(0,T;H^1_{\rm loc}(\O; {\mathbb M}^{n{\times}n}_{\rm sym})).$$
Unfortunately, the proofs of these results are very rigid and do not easily extend to other types of elasticity sets. A more general result has been obtained in \cite{GM}, where it has been proved that the same regularity result holds if $K=K_D \oplus \R {\rm Id}$, where $K_D$ is a smooth compact convex subset of deviatoric symmetric matrices, with positive curvature.\medskip

In the footsteps of \cite{BM}, we address here the question of deriving similar regularity properties for the stress tensor in the case of Drucker-Prager and Mohr-Coulomb elasticity sets. A result in this direction has been obtained in \cite{Se} in the static case. The object of this present work is to extend this result to the quasi-static case. We indeed prove that in dimension $n=2$, the stress tensor has the expected regularity $\sigma \in L^\infty(0,T;H^1_{\rm loc}(\O; \Ms))$ (see Theorem~\ref{thm:main}). 

Our proof rests on a duality approach, analogous to that of \cite{Se} and \cite{Dem}. Since the solutions to the perfect plasticity model are singular, one needs first to regularize the problem. To this aim we consider a visco-plastic approximation of Perzyna type (see \cite{Suquet}). In contrast with the Kelvin-Voigt visco-elastic regularization used in \cite{Se,Dem}, it modifies the dissipation potential $H$ (see \eqref{eq:Hill}) in such a way that the regularized flow rule is not given by a differential inclusion as in \eqref{eq:diff-inc}, but is a differential equation. In other words, the regularized plastic strain rate is univocally determined by the stress (see \eqref{eq:reg-flow-rule3}). To the best of our knowledge, this is the first time that this kind of approximation is used to establish regularity of the stress. 

In the following we explain the strategy of our proof and the main difficulties. For simplicity of exposition we neglect the terms due to the visco-plastic approximation (which are those $\e$-dependent), we assume the solution to be smooth,
and we show how a uniform estimate on the $H^1_{\rm loc}(\O; {\mathbb M}^{n{\times}n}_{\rm sym})$-norm of $\sigma(t)$ can be obtained.

In the static case \cite{Se} and in the absence of external body loads, the equilibrium states $(u,e,p)$ minimize the energy functional
$$(v,\eta,q) \mapsto \frac12 \int_\O \CC \eta:\eta\, dx + \int_\O H(q)\, dx$$
among all triples $(v,\eta,q)$ satisfying the additive decomposition $Ev=\eta+q$ and the boundary condition. Minimizing first with respect to $q$ shows that $u$ is actually a minimizer of
$$v \mapsto \int_\O g(Ev)\, dx,$$
where $g$ is the convex conjugate of the auxiliary energy $\tau \mapsto \frac12 \CC^{-1} \tau:\tau + I_K(\tau)$, and the stress is then given by 
\begin{equation}\label{eq:sigma=Dg}
\sigma=Dg(Eu).
\end{equation}
This formula for the stress is the starting point of the analysis. In order to get estimates on the spatial gradient of $\sigma$, it would be convenient to differentiate \eqref{eq:sigma=Dg}. Unfortunately, the function $g$ being only of class $\C^1$ with Lipschitz continuous partial derivatives, the classical chain rule formula for the composition of a Lipschitz function with a vector valued (Sobolev) function does not apply. To overcome this problem, we compute explicitely the expression of $g$ (see \eqref{eq:g}) and use a general chain rule formula established in \cite{ADM} to get a formula of the type $\partial_k \sigma=D^2g(Ev)\partial_k Ev$.

The study of the quasi-static case introduces further difficulties. In the case of Von Mises plasticity \cite{Dem}, the previous method is applied to the incremental problem where $(u_i,e_i,p_i)$ minimizes
$$(v,\eta,q) \mapsto \frac12 \int_\O \CC \eta:\eta\, dx + \int_\O H(q-p_{i-1})\, dx$$
among all admissible triples $(v,\eta,q)$ at time $t_i$, or still $u_i$ minimizes
$$v \mapsto \int_\O g(Ev-p_{i-1})\, dx.$$
It is shown that the stress $\sigma_i:=Dg(Eu_i-p_{i-1})$ satisfies an $H^1_{\rm loc}(\O;{\mathbb M}^{n{\times}n}_{\rm sym})$ estimate that is uniform with respect to the viscosity parameter, but may possibly depend on the time step. Afterwards, a uniform estimate with respect to the time step is established showing the desired regularity result. The main drawback of this approach is that it necessitates to perform twice almost the same computations. 

In contrast with \cite{Dem}, we directly work on a time continuous model, and use the underlying variational structure to establish a similar formula (see \eqref{form-sigma}) for the stress as 
$$\sigma=Dg(\xi), \quad \xi:=e+\dot p.$$ 
The strategy consists then in differentiating the equilibrium equation, take $(\partial_k \dot u)\psi$ as test function (where $\psi$ is a suitable cut-off function), and deduce an inequality that provides a bound on $\partial_k \sigma$ (see Proposition~\ref{prop:unif-bound}). The main difficulty in doing so is to deal with a term of the form (see \eqref{eq:stim6})
\begin{equation}\label{eq:term1}
\int_0^t \int_\O |\partial_k \sigma| |\xi|\psi\, dx \, ds,
\end{equation}
since $\xi$ is only bounded in $L^1(\O \times (0,T);{\mathbb M}^{n{\times}n}_{\rm sym})$. We thus need to absorb this term by some of the coercivity terms of the left hand side 
$$\int_0^t \int_\O \partial_k \sigma:\partial_k \sigma \psi\, dx \, ds + \int_0^t \int_\O \partial_k \sigma : \partial_k \xi \psi \, dx \, ds.$$
To this end, we use the special structure of the function $g$, together with the formula $\partial_k\sigma=D^2g(\xi) \partial_k \xi$, to show that the integral in \eqref{eq:term1} can be controled by 
$$M\left(\int_0^t \int_\O \partial_k \sigma : \partial_k \xi \psi \, dx \, ds\right)^{1/2},$$
where $M$ only depends on various norms of $(u,e,p)$ in the energy space (see \eqref{eq:estim71}--\eqref{eq:estim72}). 

Note that the estimates performed in the proof of Proposition~\ref{prop:unif-bound} should actually hold true in any space dimension. However, since the final estimate \eqref{eq:endly} involves the $L^2(\O \times (0,T);\R^n)$-norm of $\dot u$ and this is controled only in dimension $n=2$ owing to the continuous embedding of $BD(\O)$ into $L^2(\O;\R^2)$\footnote{In dimension $n=3$, $BD(\O)$ only embeds into $L^{3/2}(\O;\R^3)$}, this estimate turns out to be uniform with respect to the viscosity parameter only in dimension $n=2$. This special role played by the dimension $n=2$ was already observed in the papers \cite{BM,ReSe,Se}, and is a recurrent feature in plasticity models where the elasticity set has no bounded directions.\medskip

A direct consequence of this result is that, by means of the quasi-continuous representative of the stress with respect to the $H^1$-capacity, one can express the flow rule in a pointwise strong form. Indeed, since $\sigma(t) \in L^2(\O;\Ms)$ and $\dot p(t) \in \M(\overline \O;{\mathbb M}^{n{\times}n}_{\rm sym})$, the product between $\sigma(t)$ and $\dot p(t)$ is in general not well defined. This issue is usually overcome by introducing a distributional notion of duality $\sigma(t):\dot p(t)$ as in \cite{KT}. In the present situation the precise representative, denoted by $\tilde \sigma(t)$, is defined up to a set of zero capacity and thus it turns out to be $|\dot p(t)|$-measurable. This enables one to give a sense to the pointwise product of the (quasi-continuous) stress $\tilde \sigma(t)$ with the measure $\dot p(t)$, and, in particular, to express Hill's principle of maximal plastic work \eqref{eq:Hill} in a strong sense.\medskip

The paper is organised as follows. In Section~\ref{sec:2} we introduce the precise mathematical setting to formulate accurately the model of perfect plasticity, and state our main regularity result, Theorem~\ref{thm:main}. In Section~\ref{sec:3} we approximate the perfect plasticity model by means of a visco-plastic regularization, and establish the convergence of the solutions, as well as some (non uniform) regularity properties of the approximating solutions. Section~\ref{sec:4} is devoted to establish a chain rule type formula for the stress which, as explained above, is instrumental for the subsequent analysis. In Section~\ref{sec:5} we prove an estimate on the (visco-plastic) stress in $L^\infty(0,T;H^1_{\rm loc}(\O;\Ms))$, which is uniform with respect to the viscosity parameter. Owing to this estimate we complete the proof of Theorem~\ref{thm:main}. The last section is devoted to show the validity of the flow rule by means of the quasi-continuous representative of the stress.

\section{Mathematical formulation of the problem}\label{sec:2}

\subsection{Notation}

\subsubsection{Vectors and matrices}

If $a,b \in \R^n$, we write $a \cdot b$ for the Euclidean scalar product, and we denote by $|a|=\sqrt{a \cdot a}$ the associated norm.  
\medskip

We write $\mathbb M^{n \times n}$ for the set of real $n \times n$ matrices, and $\mathbb M^{n \times n}_{\rm sym}$ for that of all real symmetric $n \times n$ matrices.  Given two matrices $A$ and $B \in \mathbb M^{n \times n}$, we use the Frobenius scalar product $A:B=\sqrt{{\rm tr}(A^T B)}$ (where $A^T$ is the transpose of $A$, and ${\rm tr }A$ is its trace), and we denote by $|A|=\sqrt{A:A}$ the associated norm. If $A \in \mathbb M^{n \times n}$, we denote by $A_D:=A-\frac{1}{n}({\rm tr} A) {\rm Id}$ the deviatoric part of $A$, which is a trace free matrix. We recall that for any two vectors $a,b \in \R^n$, $a \otimes b :=a b^T\in \mathbb M^{n \times n}$ stands for the tensor product, and $a \odot b:= (a \otimes b + b \otimes a) /2\in \mathbb M^{n \times n}_{\rm sym}$ denotes  the symmetric tensor product.
 
\subsubsection{Functional spaces}

We use standard notation for Lebesgue and Sobolev spaces. 

\medskip

Let $X \subset \R^n$ be a locally compact set and $Y$ be an Euclidean space. We write $\mathcal{M}(X;Y)$ (or simply $\mathcal M(X)$ if $Y=\R$) for the space of bounded Radon measures in $X$ with values in $Y$, endowed with the  norm $|\mu|(X)$, where $|\mu|\in \mathcal{M}(X)$ is the variation of the measure $\mu$. The Lebesgue measure in $\R^n$ is denoted by $\LL^n$, and the $(n-1)$-dimensional Hausdorff measure by $\HH^{n-1}$.  

\medskip

If $U \subset \R^n$ is an open set, $BD(U)$ stands for the space of functions of bounded deformation in $U$, {\it i.e.}, $u\in BD(U)$ if $u \in L^1(U;\R^n)$ and $Eu\in \M(U;\mathbb M^{n \times n}_{\rm sym})$, where $Eu:=(Du+Du^T)/2$ and $Du$ is the distributional derivative of $u$. 
We recall that, if $U \subset \R^n$ is bounded and has a Lipschitz boundary, $BD(U)$ can be embedded into $L^{n/(n-1)}(U;\R^n)$. 
We refer to \cite{T} for general properties of this space.

\subsubsection{Capacity}

We finally recall the definition and several facts about capacity (see \cite{AH}). Let $\O \subset \R^n$ be an open set. The capacity of a set $A \subset \O$ in $\O$ is defined by
$$\Cap(A):=\inf\left\{\int_\O |\nabla u|^2\, dx : u \in H^1_0(\O), \; u \geq 1 \text{ $\LL^n$-a.e.\ in a neighborhood of }A\right\}.$$
One of the interests of capacity is that it enables one to give an accurate sense to the pointwise value of Sobolev functions (see \cite[Section~6.1]{AH}). More precisely, every $u \in H^1(\O)$ has a quasi-continuous representative $\tilde u$, which is uniquely defined except on a set of capacity zero in $\O$. It means that $\tilde u=u$ $\LL^n$-a.e.\ in $\O$, and that, for each $\e>0$, there exists a closed set $A_\e \subset \O$ such that $\Cap(\O \setminus A_\e)<\e$ and $\tilde u|_{A_\e}$ is continuous on $A_\e$. In addition (see \cite[Theorem~6.2.1]{AH}), there exists a  Borel set $Z \subset \O$ with $\Cap(Z)=0$ such that
$$
\lim_{r\to0^+}\frac{1}{\LL^n(B_r(x) \cap \O)} \int_{B_r(x) \cap \O} u(y)\, dy =\tilde u(x) \quad \text{ for all } x\in \O \setminus Z.
$$

\subsection{Description of the model}

\subsubsection{The reference configuration}  We denote by
\begin{equation}\label{eq:Omega}\tag{$A_1$}
\O \subset \R^2 \text{ a bounded connected open set with Lipschitz boundary}
\end{equation}
the reference configuration of an elasto-plastic material.

\subsubsection{Boundary condition}

We assume that the body is subjected to a time-dependent boundary displacement, which is the trace on $\partial \O$ of a function $w(t):\O \to \R^2$ with 
\begin{equation}\label{eq:w}\tag{$A_2$}
w \in AC([0,T];H^1(\O;\R^2)).
\end{equation} 

\subsubsection{The elastic energy}

We consider an isotropic body whose fourth order elasticity tensor $\CC$ is given by
\begin{equation}\label{eq:isotropic}\tag{$A_3$}
\CC e= \lambda ({\rm tr} e) {\rm Id} +2\mu e \quad \text{ for all }e \in \Ms,
\end{equation}
where $\lambda$ and $\mu$ are the Lam\'e coefficients satisfying $\mu>0$ and $\lambda + \mu>0$. Note that there exist two constants $c_0$, $c_1>0$ such that
\begin{equation}\label{eq:coerc-Q}
c_0 |e|^2\leq  \CC e: e \leq c_1|e|^2 \quad \text{ for all }e \in \Ms.
\end{equation}
Setting $K_0:=\lambda +\mu$, the inverse of $\CC$ can be represented as
$$\CC^{-1}\sigma=\frac{1}{4K_0}({\rm tr} \sigma) {\rm Id}+ \frac{1}{2\mu}\sigma_D\quad \text{ for all }\sigma \in \Ms.$$

We define the elastic energy, for all $e \in L^2(\O;\Ms)$, by
$$
\Q(e):= \frac12 \int_\O \CC e(x):e(x)\, dx.
$$

\subsubsection{The elasticity set}

In this paper we are interested in the  {\it Drucker-Prager} and {\it Mohr-Coulomb}\footnote{Note that in dimension $n=2$ the two models are equivalent because of the algebraic identity $\sqrt{2} |\sigma_D|=\sigma_{\rm max}-\sigma_{\rm min}$, where $\sigma_{\rm max}$ (resp.\ $\sigma_{\rm min}$) is the largest (resp.\ lowest) eigenvalue of $\sigma_D$.}
 models, where the elasticity domain is a closed and convex cone with vertex lying on the axis of hydrostatic stresses given by
\begin{equation}\label{eq:K}\tag{$A_4$}
K:=\{\sigma \in\Ms :  |\sigma_D|+ \alpha {\rm tr} \sigma \leq \kappa\}.
\end{equation}
In the previous formula $\alpha>0$ and $\kappa>0$ are positive constants related to the cohesion and the coefficient of internal friction of the material, respectively. 

\subsubsection{External forces}

We consider a time-dependent body load $f(t):\O \to \R^2$ satisfying 
\begin{equation}\label{eq:f}\tag{$A_5$}
f \in AC([0,T];L^2(\O;\R^2)),
\end{equation}
which satisfies the usual safe-load condition: there exist $\chi\in AC([0,T];L^2(\O;\Ms))$ and a constant $\delta \in (0,\kappa)$ such that for every $t\in[0,T]$
\begin{equation}\label{eq:safe}\tag{$A_6$}
\begin{cases}
-\div\chi(t) = f(t) \quad \text{ in }\O,\\
|\chi_D(t)|+ \alpha {\rm tr} \chi(t) \leq \kappa-\delta\quad \text{ in }\O.
\end{cases}
\end{equation}

\subsubsection{The dissipation energy}

We define the support function $H:\Ms\to [0,+\infty]$ of $K$ by
$$H(p):=\sup_{\sigma \in K} \sigma {\,:\,} p \quad \text{ for all } p \in \Ms.$$
Since $K$ is closed and convex, $H$ is convex, lower semicontinuous, and positively $1$-homogeneous. In addition, since $0$ belongs to the interior of $K$, the functions $H$ enjoys the following coercivity property: there exists $c_2>0$
$$H(p) \geq  c_2 |p|\quad \text{ for all }p \in\Ms .$$

It is easy to establish the following explicit formula for the function $H$.
\begin{lem}\label{lem:H}
For all $p \in \Ms$,
$$H(p)=
\begin{cases}
\ds\frac{\kappa\, {\rm tr }\, p}{2\alpha} & \ds \text{ if } |p_D| \leq \frac{{\rm tr }\, p}{2\alpha},\\[0.3cm]
+\infty & \ds \text{ if } |p_D| > \frac{{\rm tr }\, p}{2\alpha}.
\end{cases}
$$
\end{lem}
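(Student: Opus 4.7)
The plan is to compute the supremum directly by splitting every matrix into its deviatoric and spherical parts and then optimizing in two stages. Since we work in dimension $n=2$, one has $\mathrm{Id}:\mathrm{Id}=2$ and the decompositions $\sigma=\sigma_D+\tfrac12(\mathrm{tr}\,\sigma)\mathrm{Id}$ and $p=p_D+\tfrac12(\mathrm{tr}\,p)\mathrm{Id}$ are orthogonal, which gives the identity
$$
\sigma:p = \sigma_D:p_D + \frac{1}{2}\,\mathrm{tr}\,\sigma\;\mathrm{tr}\,p.
$$
Parametrizing $\sigma\in K$ by $s:=\mathrm{tr}\,\sigma\in\R$ and $\tau:=\sigma_D\in\Ms$ with $\mathrm{tr}\,\tau=0$, the constraint defining $K$ becomes $|\tau|\le \kappa-\alpha s$ (which forces $s\le\kappa/\alpha$).

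The first optimization is over $\tau$ for fixed $s$. By Cauchy--Schwarz, $\tau:p_D\le|\tau|\,|p_D|\le(\kappa-\alpha s)|p_D|$, and the upper bound is attained (take $\tau=(\kappa-\alpha s)p_D/|p_D|$ if $p_D\ne0$, otherwise any admissible $\tau$). Substituting into the expression for $\sigma:p$ reduces the problem to
$$
H(p)=\sup_{s\le\kappa/\alpha}\Big(\kappa|p_D|+s\Big(\tfrac{1}{2}\mathrm{tr}\,p-\alpha|p_D|\Big)\Big).
$$

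The second optimization in $s$ is then a one-dimensional linear problem whose sign of the slope dictates the answer. If $\tfrac12\mathrm{tr}\,p-\alpha|p_D|>0$, i.e.\ $|p_D|<\tfrac{\mathrm{tr}\,p}{2\alpha}$, the maximum is attained at $s=\kappa/\alpha$ and yields $H(p)=\tfrac{\kappa\,\mathrm{tr}\,p}{2\alpha}$; the limiting case $|p_D|=\tfrac{\mathrm{tr}\,p}{2\alpha}$ gives the same value (the expression is constant in $s$). If instead $\tfrac12\mathrm{tr}\,p-\alpha|p_D|<0$, i.e.\ $|p_D|>\tfrac{\mathrm{tr}\,p}{2\alpha}$, one lets $s\to-\infty$ to obtain $H(p)=+\infty$.

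This yields exactly the two cases in the statement. There is no genuine obstacle; the only point that requires a moment of care is that the second case also covers all $p$ with $\mathrm{tr}\,p\le0$ and $p_D\ne0$, in agreement with the fact that $K$ is unbounded in the direction $-\mathrm{Id}$, so that $H$ cannot be finite on any $p$ having negative trace or purely deviatoric part.
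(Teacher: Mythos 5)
Your proof is correct. The paper states this lemma without proof (the authors simply declare the formula ``easy to establish''), and your argument is exactly the natural direct computation they presumably have in mind: the orthogonal deviatoric/spherical splitting of the Frobenius product in dimension $2$, the inner Cauchy--Schwarz optimization over $\sigma_D$ at fixed trace, and the outer one-dimensional linear optimization over ${\rm tr}\,\sigma\le\kappa/\alpha$, whose slope sign separates the finite case from the $+\infty$ case. All the small points are handled properly, including the boundary case $|p_D|={\rm tr}\,p/(2\alpha)$ and the attainability of the inner supremum when $p_D=0$.
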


The dissipated energy functional is then defined, for all $p \in L^1(\O;\Ms)$, by 
$$
\HH(p):= \int_\O H(p(x))\, dx.
$$
As a consequence of the previous properties of $H$, we infer that $\HH$ is sequentially weakly lower semicontinuous in $L^1(\O;\Ms)$. Since $L^1(\O;\Ms)$ is not reflexive (bounded sequences in that space are only weakly* sequentially compact in the space of measures), it will also be useful to extend the definition of $\HH$ when $p \in \M(\overline \O;\Ms)$. According to \cite{GS}, we define the non-negative Borel measure
$$
H(p):= H\left(\frac{dp}{d|p|}\right)|p|,
$$
where $\frac{dp}{d|p|}$ is the Radon-Nikodym derivative of $p$ with respect to its variation $|p|$.
In general, the measure $H(p)$ is not even locally finite. However, if further $H(p)$ has finite mass, {\it i.e.}, if $H(p)$ is a bounded Radon measure, we can define the dissipation functional
$$
\HH(p):=H(p)(\overline \O).
$$
In that case, the results of \cite{DT1,DT2} apply  and $H(p)$ can be expressed by means of a duality formula. 
If $H(p) \in \M(\overline \O)$, we get that (see \cite{BM})
\begin{equation}\label{eq:duality-form-phi}
\int_{\overline \O} \varphi \, d H(p)=\sup\left\{\int_{\overline \O}\varphi\sigma{\,:\,}dp \ : \sigma \in \C^\infty(\overline \O;K) \right\},
\end{equation}
for any $\varphi \in \C(\overline \O)$ with $\varphi \geq 0$, and in particular
\begin{equation}\label{eq:duality-form}
 \HH(p)=\sup\left\{\int_{\overline \O}\sigma{\,:\,}dp \ : \sigma \in \C^\infty(\overline \O;K) \right\}.
 \end{equation}
Note also that the Reshetnyak Theorem (see \cite[Theorem~2.38]{AFP}) applies here, so that $\HH$ is sequentially weakly* lower semicontinuous in $\M(\overline \O;\Ms)$. 

\subsubsection{Spaces of admissible fields}

Given a prescribed boundary displacement $\hat w \in H^1(\O;\R^2)$, we will consider the following spaces of kinematically admissible fields:
\begin{multline*}
\AR(\hat w):=\big\{(v,\eta,q) \in H^1(\O;\R^2) \times L^2(\O;\Ms) \times L^2(\O;\Ms):\\
Ev=\eta+q \text{ a.e.\ in }\O,\quad  v=\hat w\text{ $\HH^1$-a.e.\ on }\partial \O \big\},
\end{multline*}
and\begin{multline*}
\A(\hat w):=\Big\{(v,\eta,q) \in BD(\O) \times L^2(\O;\Ms) \times \M(\overline \O;\Ms) :\\
Ev=\eta+q \text{ in }\O, \quad q=(\hat w-v) \odot \nu\;  \HH^1 \text{ on }\partial \O\Big\},
\end{multline*}
where $\nu$ is the outer unit normal to $\partial\O$. 

The space of plastically admissible stresses is defined by
$$\K:=\{\tau \in L^2(\O;\Ms) : \tau(x) \in K \text{ for a.e.\ }x \in \O\},$$
and the space of statically admissible stresses is given by
$$\mathcal S :=\{ \tau \in L^2(\O;\Ms) : \div\tau\in L^2(\O;\R^2)\}.$$

\subsubsection{Stress/strain duality}

The duality pairing between stresses and plastic strains is {\em a priori} not well defined, since the former are only squared Lebesgue integrable, while the latter are possibly singular measures. Following \cite{KT}, we define the following distributional notion of duality. 

\begin{defn}\label{def:duality}
Let $\sigma\in \mathcal S$ and $(u,e,p) \in \A(\hat w)$ with $\hat w \in H^1(\O;\R^2)$.
We define the distribution $[\sigma{\,:\,}p] \in \mathcal D'(\R^2)$ supported in $\overline \O$ by
\begin{equation}\label{eq:duality}
\langle [\sigma{\,:\,}p], \varphi\rangle = \int_\O \varphi(\hat w-u)\cdot\div\sigma\, dx +\int_\O \sigma:[(\hat w-u)\odot\nabla\varphi]\, dx
+\int_\O \sigma:(E\hat w-e)\varphi\, dx
\end{equation}
for every $\varphi\in \C^\infty_c(\R^2)$. 
The duality product is then defined as
$$\langle\sigma,p\rangle:=\langle[\sigma{\,:\,}p], 1\rangle=\int_\O (\hat w-u)\cdot\div\sigma\, dx+\int_\O \sigma:(E\hat w-e)\, dx.$$
\end{defn}

\begin{rem}\label{product}
Note that the first and second integrals in \eqref{eq:duality} are well defined since
$BD(\O)$ is embedded into $L^2(\O;\R^2)$ for $n=2$. Moreover,
according to the integration by parts formula in $BD(\O)$ (see \cite[Theorem~3.2]{B}), if $\sigma \in \mathcal S\cap\C^1(\overline \O;\Ms)$, we have
\begin{equation}\label{eq:duality-sigma-smooth}
\langle [\sigma{\,:\,}p], \varphi\rangle=\int_{\overline \O}  \varphi\sigma :dp \quad \text{ for all }\varphi \in \C^\infty_c(\R^2).
\end{equation}
A convolution argument shows that \eqref{eq:duality-sigma-smooth} remains true provided $\sigma \in\mathcal S\cap \C(\overline \O;\Ms)$ and $\varphi \in \C^\infty(\overline \O)$.
\end{rem}

Using this notion of stress/strain duality, the duality formulas  \eqref{eq:duality-form-phi} and \eqref{eq:duality-form} can be now extended to less regular statically and plastically admissible stresses. 
If $p\in\M(\overline\O;\Ms)$ with $\HH(p)<+\infty$,
$$\int_{\overline \O}\varphi\, dH(p)=\sup\Big\{\langle [\sigma{\,:\,}p],\varphi \rangle  : \sigma \in \K\cap \mathcal S\Big\},$$
for all $\varphi \in \C^\infty(\overline \O)$ with $\varphi \geq 0$, and in particular,
$$\HH(p)=\sup\Big\{\langle \sigma, p\rangle  : \sigma \in \K \cap \mathcal S\Big\}.$$

The following result establishes a coercivity property of the functional $p \mapsto \HH(p) -\langle \chi(t),p\rangle$ (see {\it e.g.} \cite[Proposition~6.1]{BM}).

\begin{prop}\label{prop:QS1}
Let $\hat w\in H^1(\O;\R^2)$ and  $(u,e,p)\in \A(\hat w)$. Then there exists a constant $C_{\delta,\alpha}$, depending on $\delta$ and $\alpha$, such that the following coercivity estimate holds:
$$
\HH(p) -\langle\chi(t),p\rangle \geq C_{\delta,\alpha} \|p\|_{\M(\overline \O;\Ms)}$$
for every $t\in[0,T]$.
\end{prop}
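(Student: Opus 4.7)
My strategy is to combine the duality formula $\HH(p) = \sup\{\langle\sigma, p\rangle : \sigma \in \K \cap \mathcal{S}\}$ (stated just above the proposition) with one explicit competitor. I first dispose of the trivial case $\HH(p)=+\infty$: since $\chi(t)\in L^2(\O;\Ms)$ and $\div\chi(t)=-f(t)\in L^2(\O;\R^2)$, Definition~\ref{def:duality} gives $|\langle\chi(t),p\rangle|<+\infty$, so the left-hand side is $+\infty$ and there is nothing to prove. From now on I assume $\HH(p)<+\infty$.

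The key observation is that the cone $K$ is unbounded only in the hydrostatic direction ${\rm Id}$, which is itself constant and hence divergence-free, so constant perturbations of $\chi(t)$ in this direction automatically stay in $\mathcal{S}$. I therefore try the competitor
$$
\sigma_\ast := \chi(t) + \frac{\delta}{2\alpha}{\rm Id}.
$$
Using $({\rm Id})_D=0$ and ${\rm tr}\,{\rm Id}=2$, the safe-load bound \eqref{eq:safe} gives $|\sigma_{\ast,D}|+\alpha\,{\rm tr}\,\sigma_\ast = |\chi_D(t)|+\alpha\,{\rm tr}\,\chi(t)+\delta \leq \kappa$ a.e., so $\sigma_\ast\in\K$; and $\div\sigma_\ast=\div\chi(t)\in L^2$, so $\sigma_\ast\in\mathcal{S}$. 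Plugging $\sigma_\ast$ into the duality formula and using the linearity of the pairing in its first argument,
$$
\HH(p)-\langle\chi(t),p\rangle \geq \langle\sigma_\ast,p\rangle-\langle\chi(t),p\rangle = \frac{\delta}{2\alpha}\,\langle{\rm Id},p\rangle.
$$
Since ${\rm Id}$ is smooth and divergence-free, Remark~\ref{product} (applied with $\varphi=1$) identifies $\langle{\rm Id},p\rangle$ with the measure integral $\int_{\overline\O}{\rm tr}\,dp$.

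It then remains to bound this trace mass below by $|p|(\overline\O)$. Writing $\nu:=dp/d|p|$, the finiteness of $\HH(p)$ combined with Lemma~\ref{lem:H} forces $|\nu_D|\leq{\rm tr}\,\nu/(2\alpha)$, and in particular ${\rm tr}\,\nu\geq 0$, $|p|$-almost everywhere. In $n=2$ the algebraic identity $|\nu|^2 = |\nu_D|^2 + ({\rm tr}\,\nu)^2/2$ together with $|\nu|=1$ then forces
$$
{\rm tr}\,\nu \geq \frac{2\alpha}{\sqrt{1+2\alpha^2}} \qquad |p|\text{-a.e.};
$$
integrating over $\overline\O$ and inserting into the previous lower bound yields the proposition with the explicit constant $C_{\delta,\alpha}=\delta/\sqrt{1+2\alpha^2}$. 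I do not anticipate any real obstacle here: the only conceptual step is guessing the right competitor, which simultaneously exploits that ${\rm Id}$ is divergence-free (so $\sigma_\ast$ stays in $\mathcal{S}$ without any cutoff or extension) and that the safe-load slack $\delta$ in \eqref{eq:safe} sits precisely along the hydrostatic axis. The identification $\langle{\rm Id},p\rangle=\int_{\overline\O}{\rm tr}\,dp$ is the only step that appeals to anything beyond algebra, and it is exactly the smooth case already provided by Remark~\ref{product}.
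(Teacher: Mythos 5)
Your proof is correct and complete. The paper itself gives no argument for this proposition but simply cites \cite[Proposition~6.1]{BM}; your route --- the hydrostatic competitor $\chi(t)+\tfrac{\delta}{2\alpha}{\rm Id}$, which stays in $\K\cap\mathcal S$ because ${\rm Id}$ is divergence-free and the safe-load slack $\delta$ sits along the trace direction, combined with the pointwise bound ${\rm tr}\,\nu\ge 2\alpha/\sqrt{1+2\alpha^2}$ forced by Lemma~\ref{lem:H} and $|\nu|=1$ --- is the standard one for Drucker--Prager-type cones and yields the stated estimate with the explicit constant $C_{\delta,\alpha}=\delta/\sqrt{1+2\alpha^2}$.
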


\subsubsection{Initial condition}

We finally consider an initial datum $(u_0,e_0,p_0) \in  \A(w(0))$ and $\sigma_0:=\CC e_0$ satisfying the stability conditions
\begin{equation}\label{eq:init2}\tag{$A_7$}
\sigma_0 \in \K, \quad -\div \sigma_0=f(0) \text{ in } \O.
\end{equation}

\bigskip

We are now in position to state the existence result obtained in \cite{BM}.

\begin{thm}\label{thm:nocap2}
Assume  \eqref{eq:Omega}--\,\eqref{eq:init2}. 
Then there exist 
$$
\begin{cases}
u \in AC([0,T];BD(\O)),\\
e, \; \sigma \in AC([0,T];L^2(\O;\Ms)),\\
p \in AC([0,T]; \M(\overline\O;\Ms)),
\end{cases}
$$
with $$(u(0),e(0),p(0))=(u_0,e_0,p_0)$$
that satisfy, for all $t \in [0,T]$:
\begin{itemize}
\item[(i)] the kinematic compatibility 
$$
\begin{cases}
Eu(t)=e(t)+p(t) \text{ in }\O,\\
p(t)=(w(t)-u(t))\odot \nu \HH^1 \text{ on }\partial\O,
\end{cases}
$$
\item[(ii)] the static and plastic admissibility
$$
\begin{cases}
\sigma(t)=\CC e(t),\\
- \div\sigma(t)=f(t) \text{  in }\O,\\
 \sigma(t) \in \K,
 \end{cases}
$$
\item[(iii)] the energy balance
\begin{equation}\label{eq:ee}
\Q(e(t)) + \int_0^t \HH (\dot p(s))\, ds = \Q(e_0) 
  + \int_0^t\int_\O \sigma:E\dot w\, dx \, ds +\int_0^t \int_\O f\cdot (\dot u-\dot w)\, dx\, ds.
\end{equation}
\end{itemize}
Moreover, the stress $\sigma$ is unique, and for a.e.\ $t \in [0,T]$ the distribution $[\sigma(t):\dot p(t)]$ is well defined, and it is a measure in $\M(\overline \O)$ satisfying Hill's principle of maximum plastic work
\begin{equation}\label{eq:fr}
H(\dot p(t))= [\sigma(t):\dot p(t)] \quad \text{ in }\M(\overline \O).
\end{equation}
\end{thm}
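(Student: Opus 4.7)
The natural approach is the time-incremental scheme for rate-independent evolutions. Fix a subdivision $0=t_0^N<\dots<t_N^N=T$, set $(u_0^N,e_0^N,p_0^N):=(u_0,e_0,p_0)$, and define $(u_i^N,e_i^N,p_i^N)$ recursively as a minimizer of
$$F_i(v,\eta,q):=\Q(\eta)+\HH(q-p_{i-1}^N)-\int_\O f(t_i^N)\cdot v\,dx$$
over $(v,\eta,q)\in\A(w(t_i^N))$. The existence of a minimizer is the first non-trivial step, since $K$ is unbounded in the hydrostatic direction and $\HH$ is infinite off the dilatancy cone $|p_D|\le \mathrm{tr}\, p/(2\alpha)$, so neither term directly bounds $|q|(\ol\O)$. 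Using the safe-load decomposition $f(t_i^N)=-\div\chi(t_i^N)$, one rewrites $\int_\O f(t_i^N)\cdot v\,dx$ as a boundary contribution plus $\int_\O\chi(t_i^N){:}\eta\,dx+\langle\chi(t_i^N),q\rangle$, and then Proposition~\ref{prop:QS1} applied at $t_i^N$ (adapted to the shift $q-p_{i-1}^N$) gives $\HH(q-p_{i-1}^N)-\langle\chi(t_i^N),q-p_{i-1}^N\rangle\ge C_{\delta,\alpha}\|q-p_{i-1}^N\|_{\M(\ol\O;\Ms)}$. Together with the coercivity of $\Q$ on $\eta$ and the embedding $BD(\O)\hookrightarrow L^2(\O;\R^2)$ available in dimension two, a minimizing sequence is bounded in all three components, and weak lower semicontinuity of $\Q$ and $\HH$ plus the weak closedness of $\A(w(t_i^N))$ delivers a minimizer.

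The first variation of $F_i$ yields the discrete equilibrium $-\div\sigma_i^N=f(t_i^N)$ by varying $v$ at fixed $Ev=\eta+q$, and the duality formula \eqref{eq:duality-form} then forces the plastic admissibility $\sigma_i^N\in\K$ together with the discrete Hill identity $[\sigma_i^N{:}(p_i^N-p_{i-1}^N)]=H(p_i^N-p_{i-1}^N)$ in $\M(\ol\O)$. Testing the minimality of $(u_i^N,e_i^N,p_i^N)$ against the natural competitor $(u_{i-1}^N+w(t_i^N)-w(t_{i-1}^N),\,e_{i-1}^N+Ew(t_i^N)-Ew(t_{i-1}^N),\,p_{i-1}^N)$ produces a one-step upper bound whose summation over $i$ is a discrete version of \eqref{eq:ee}. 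Forming piecewise-constant and piecewise-affine interpolants in time, applying Helly's selection theorem to the nondecreasing total dissipation $i\mapsto\sum_{j\le i}\HH(p_j^N-p_{j-1}^N)$, and exploiting weak-$*$ compactness in $BD(\O)$, $L^2(\O;\Ms)$, and $\M(\ol\O;\Ms)$, one extracts (subsequential) limits $(u(t),e(t),p(t),\sigma(t))$ for every $t\in[0,T]$ that satisfy (i) and (ii) by weak closedness of $\A(w(t))$ and of $\K$.

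The energy equality (iii) is obtained by combining the ``$\le$'' direction, inherited from the discrete upper bound by lower semicontinuity, with the reverse inequality proved by the standard plasticity argument: since $\sigma(s)\in\K\cap\mathcal S$, Definition~\ref{def:duality} gives $\langle\sigma(s),\dot p(s)\rangle\le\HH(\dot p(s))$, and integration by parts via $-\div\sigma(s)=f(s)$ and the kinematic compatibility $Eu(s)=e(s)+p(s)$ rewrites $\int_0^t\langle\sigma(s),\dot p(s)\rangle\,ds$ as exactly $\Q(e(t))-\Q(e_0)-\int_0^t\!\int_\O \sigma{:}E\dot w\,dx\,ds-\int_0^t\!\int_\O f\cdot(\dot u-\dot w)\,dx\,ds$. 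Equality in \eqref{eq:ee} then forces $[\sigma(t){:}\dot p(t)]=H(\dot p(t))$ as measures on $\ol\O$ for a.e.\ $t$, which is the flow rule \eqref{eq:fr}. Uniqueness of $\sigma$ follows by a convexity argument applied to the average of two solutions sharing the same data: the strict convexity of $e\mapsto\CC e{:}e$ combined with the energy balance forces $e^1=e^2$ a.e.\ in space-time, hence $\sigma^1=\sigma^2$.

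The main obstacle throughout is the absence of any $L^\infty$ bound on admissible stresses that the Von Mises or Tresca settings would provide: the unbounded hydrostatic direction of $K$ both forces the dimension restriction $n=2$ (through the embedding $BD(\O)\hookrightarrow L^2(\O;\R^2)$ needed to give $\int_\O f\cdot v\,dx$ a meaning on $BD$) and requires every a priori bound on $p$ to be built from the safe-load coercivity of Proposition~\ref{prop:QS1} rather than from any direct mass estimate on $\HH$.
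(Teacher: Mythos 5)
Your outline is correct, and it is essentially the proof given in the reference \cite{BM} that the paper cites for this theorem: incremental minimization of $\Q(\eta)+\HH(q-p_{i-1}^N)-\int_\O f(t_i^N)\cdot v\,dx$, with coercivity recovered from the safe-load condition via Proposition~\ref{prop:QS1} and the two-dimensional embedding $BD(\O)\hookrightarrow L^2(\O;\R^2)$, followed by interpolation, Helly-type compactness, the two-sided energy estimate, and extraction of the flow rule from saturation of the energy balance. The paper itself does not reprove the theorem this way: it quotes \cite{BM} for the statement and then, in Section~\ref{sec:3}, gives an independent construction of a solution by the Perzyna visco-plastic approximation, in which the flow rule is regularized to the differential equation $\dot p_\e=DH_\e^*(\sigma_\e)$ and one passes to the limit $\e\to0$ after deriving uniform first, second, and third energy estimates. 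Your incremental route is more elementary and needs only hypotheses \eqref{eq:Omega}--\eqref{eq:init2}; the visco-plastic route requires slightly stronger data ($w\in H^1([0,T];H^1(\O;\R^2))$ and $\chi\in W^{1,\infty}([0,T];L^\infty(\O;\Ms))$) but yields time-continuous regularized solutions and the quantitative bounds \eqref{eq:first-estim}--\eqref{eq:third-estim-bis} on which the stress-regularity proof of Theorem~\ref{thm:main} is built. Two points in your sketch deserve more care if written out: the passage from the scalar identity $\HH(\dot p(t))=\langle\sigma(t),\dot p(t)\rangle$ to the measure identity \eqref{eq:fr} requires the localized inequality $[\sigma(t):\dot p(t)]\le H(\dot p(t))$ obtained by testing \eqref{eq:duality-form-phi} with nonnegative $\varphi$; and uniqueness of $\sigma$ in the quasi-static evolution is normally proved by a Gronwall/monotonicity argument on $\Q(e^1-e^2)$ rather than by averaging two solutions, since the admissible class at time $t$ depends on the (non-unique) plastic history.
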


\begin{rem}
According to Lemma~\ref{lem:H}, the flow rule can be equivalently written as
$$\frac{\kappa\, {\rm tr }\, \dot p(t)}{2\alpha} =[\sigma(t):\dot p(t)], \quad  \text{ and }\quad  |\dot p_D(t)| \leq \frac{{\rm tr }\, \dot p(t)}{2\alpha} \quad \text{ in }\M(\overline \O).$$
\end{rem}

The main result of this work is the following regularity result.
\begin{thm}\label{thm:main}
Assume \eqref{eq:Omega}--\,\eqref{eq:init2} and that $\alpha \neq 1/\sqrt 2$ in \eqref{eq:K}. Under the additional hypotheses that $w \in H^1([0,T];H^2(\O;\R^2))$, $\chi \in W^{1,\infty}([0,T];L^\infty(\O;\Ms)) \cap H^1([0,T];H^1(\O;\Ms))$, $f \in L^\infty(0,T;H^1(\O;\R^2))\cap L^2(0,T;H^2(\O;\R^2)) \cap L^\infty(\O \times (0,T);\R^2)$ and  $e_0 \in H^1_{\rm loc}(\O;\mathbb M^{2 \times 2}_{\rm sym})$, the stress tensor satisfies
$$\sigma\in L^\infty(0,T;H^1_{\rm loc}(\O;\mathbb M^{2 \times 2}_{\rm sym})).$$
\end{thm}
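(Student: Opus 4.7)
The strategy is to regularize the problem by a Perzyna-type visco-plastic approximation, derive a pointwise chain-rule formula for the approximate stress, and then establish an $H^1_{\rm loc}$ gradient estimate that is uniform in the regularization parameter $\e$. In a first step I would introduce, for $\e>0$, a time-continuous visco-plastic evolution in which the flow rule \eqref{eq:diff-inc} is replaced by a differential equation that determines $\dot p_\e$ univocally from $\sigma_\e$. Standard monotonicity and semigroup arguments should yield existence, uniqueness, and enough additional regularity of $(u_\e,e_\e,p_\e,\sigma_\e)$ to justify the formal computations below, together with convergence to the solution of Theorem~\ref{thm:nocap2} as $\e \to 0^+$.

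Second, exploiting the variational structure of the Perzyna problem, I would show a pointwise representation of the form
$$
\sigma_\e = Dg_\e(\xi_\e),
$$
where $\xi_\e$ is a suitable combination of $e_\e$ and $\dot p_\e$ and $g_\e$ is an explicit convex function of class $\C^{1,1}$, obtained as a Legendre transform involving $\CC$, the indicator $I_K$ of the cone \eqref{eq:K}, and a quadratic perturbation of order $\e$. Using Lemma~\ref{lem:H}, the function $g_\e$ can be written in closed form as a piecewise quadratic expression whose Hessian is bounded but discontinuous across the boundary of the yielded region. Because $g_\e$ is not $\C^2$, the naive chain rule cannot be applied to $Dg_\e(\xi_\e)$; I would instead invoke the general chain rule of Ambrosio--Dal Maso \cite{ADM} to legitimize the identity
$$
\partial_k \sigma_\e = D^2 g_\e(\xi_\e)\, \partial_k \xi_\e
$$
in a weak sense. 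This is the direct analogue of the static formula \eqref{eq:sigma=Dg}.

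The heart of the argument is the uniform gradient estimate. I would differentiate the equilibrium $-\div \sigma_\e = f$ in direction $x_k$, test against $(\partial_k \dot u_\e)\psi^2$ with $\psi \in \C^\infty_c(\O)$ a cut-off, and integrate over $(0,t)$. Using $Eu_\e = e_\e + p_\e$ together with the chain-rule identity above, an integration by parts in time produces the coercive quantities
$$
\tfrac12 \int_\O |\partial_k \sigma_\e(t)|^2 \psi^2 \, dx \;+\; \int_0^t \!\!\int_\O D^2 g_\e(\xi_\e)\partial_k \xi_\e : \partial_k \xi_\e\,\psi^2\,dx\,ds,
$$
modulo commutator terms with $\psi$. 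The right-hand side contains standard forcing contributions plus the problematic term \eqref{eq:term1}, in which $\xi_\e$ is only bounded in $L^1$. The main obstacle is to absorb this bad term: exploiting the explicit piecewise-quadratic structure of $g_\e$, I would prove a pointwise inequality of the form $|\partial_k \sigma_\e|^2 \le C \, D^2 g_\e(\xi_\e) \partial_k \xi_\e : \partial_k \xi_\e$ on the yielded region, whence Cauchy--Schwarz gives
$$
\int_0^t\!\!\int_\O |\partial_k \sigma_\e|\,|\xi_\e|\,\psi^2\,dx\,ds \le M \bigg(\int_0^t\!\!\int_\O \partial_k \sigma_\e : \partial_k \xi_\e\,\psi^2\,dx\,ds\bigg)^{\!1/2},
$$
with $M$ depending only on energy norms of $(u_\e,e_\e,p_\e)$ and on the structure constants of \eqref{eq:K}. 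The assumption $\alpha \neq 1/\sqrt 2$ enters precisely here: it excludes the borderline geometry in which $D^2 g_\e$ loses the coercivity required for this comparison.

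To close the estimate uniformly in $\e$, the remaining right-hand side contributions must be controlled by data norms and already-bounded quantities. The critical one is $\|\dot u_\e\|_{L^2(\O\times(0,T);\R^2)}$, which is available only because of the continuous embedding of $BD(\O)$ into $L^{n/(n-1)}(\O;\R^n)$, whose exponent equals $2$ exactly when $n=2$; this is the origin of the two-dimensional restriction. Passing to the limit $\e \to 0^+$ and using weak lower semicontinuity of the $H^1_{\rm loc}$ seminorm together with the convergence $\sigma_\e \to \sigma$ from the first step, the bound transfers to the limit and yields $\sigma \in L^\infty(0,T;H^1_{\rm loc}(\O;\Ms))$.
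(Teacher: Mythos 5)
Your outline reproduces the architecture of the paper's proof: Perzyna visco-plastic regularization, a convex-duality formula expressing the (modified) stress as the gradient of a convex function of a strain-type quantity $\xi_\e$, the Ambrosio--Dal Maso chain rule to differentiate that formula, the differentiated equilibrium equation tested against $\partial_k\dot u_\e$ times a cut-off, and the two-dimensional embedding $BD(\O)\hookrightarrow L^2(\O;\R^2)$ to close the estimate. (A cosmetic difference: the paper keeps the $\e$-independent function $g$ and writes $\sigma_\e-\e\dot p_\e=Dg(\dot p_\e+e_\e-\e\CC^{-1}\dot p_\e)$ rather than introducing an $\e$-dependent $g_\e$.)

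However, the step you dispatch with ``whence Cauchy--Schwarz gives \dots with $M$ depending only on energy norms'' is exactly the crux, and as stated it does not work. A direct Cauchy--Schwarz on $\iint|\partial_k\sigma_\e|\,|\xi_\e|\,\psi$ would require $\|\xi_\e\psi\|_{L^2}$, and $\xi_\e$ contains $\dot p_\e$, which is uniformly bounded only in $L^1$ in space (its $L^2$ norm degenerates like $\e^{-1/2}$); so the constant $M$ you need is not controlled by the energy norms. The paper's resolution is structural and proceeds region by region: on $A_1^{\xi_\e}$ the coercivity \eqref{eq:coerc-g} of $g$ yields an $L^2$ bound on $\xi_\e$ restricted to that set; on $A_2^{\xi_\e}$ the gradient of the stress vanishes; on $A_3^{\xi_\e}$ one writes $\sigma_\e-\e\dot p_\e=\frac{\kappa}{2\alpha}{\rm Id}+hB$ and splits $\partial_k\sigma_\e-\e\partial_k\dot p_\e$ into a radial part $(\partial_k h)B$ and a tangential part. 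The radial part is \emph{not} absorbed by pairing the coercive quadratic form with $\|(\xi_\e)_D\|_{L^2}$; instead one inverts $B$ and uses the equilibrium equation $-\div\sigma_\e=f$ to express $\nabla h$ through $f$, $\e\partial_k\dot p_\e$ and tangential derivatives, so that the dangerous product pairs $\|f\|_{L^\infty}$ with the $L^1$ norm of $(\xi_\e)_D$. The tangential part is paired by Cauchy--Schwarz against the weight $h|(\xi_\e)_D|=(\sigma_D-\e\dot p_D):(\xi_\e)_D$, whose integral is in turn controlled by a further integration by parts of $\iint\sigma:(E\dot u)\,\psi$ using the $L^2$ bound on $\dot u$. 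None of this is a routine Cauchy--Schwarz, and your proposal supplies no substitute mechanism. Relatedly, the hypothesis $\alpha\neq1/\sqrt2$ does not enter through a coercivity property of the Hessian of $g$ (the associated quadratic form is nonnegative and bounded for every $\alpha>0$); it is needed precisely so that the matrix $B=\xi_D/|\xi_D|-\frac{1}{2\alpha}{\rm Id}$ is invertible with uniformly bounded inverse, which is what allows $\nabla h$ to be recovered from the divergence of the stress.
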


\section{Perzyna visco-plastic approximations}\label{sec:3}

In order to prove Theorem~\ref{thm:main}, we will need to consider a regularized problem. This will be done by means of a so-called Perzyna visco-plastic approximation. The following result, formulated here in a modern language, has been established in \cite{Suquet}. 

\medskip

Since the initial data $(u_0,e_0,p_0)$ given in \eqref{eq:init2} does not belong to the right energy space associated to the visco-plastic model, we first need to regularize it. According to \cite[Lemma~5.1]{DMDSS}, there exists a sequence $(u_{0,\e}) \subset H^1(\O;\R^2)$ such that $u_{0,\e}=w(0)$ $\HH^1$-a.e.\ on $\partial \O$, $u_{0,\e}\to u_0$ strongly in $L^1(\O;\R^2)$, and $Eu_{0,\e} \wto Eu_0$ weakly* in $\M(\overline \O;\Ms)$. Setting $p_{0,\e}=Eu_{0,\e} -e_0$, we get that $(u_{0,\e},e_0,p_{0,\e}) \in \AR(w(0))$. 

\begin{prop}\label{prop:perzyna}
Assume \eqref{eq:Omega}--\,\eqref{eq:init2}. Let $\e>0$ and let $(u_{0,\e},e_0,p_{0,\e}) \in \AR(w(0))$ be constructed as above. Then there exists a unique triple 
$$(u_\e,e_\e,p_\e) \in AC([0,T];H^1(\O;\R^2)) \times AC([0,T];L^2(\O;\Ms)) \times  AC([0,T];L^2(\O;\Ms))$$ 
such that $(u_\e(0),e_\e(0),p_\e(0))=(u_{0,\e},e_0,p_{0,\e})$,
for all  $t \in [0,T]$
$$(u_\e(t),e_\e(t),p_\e(t)) \in \AR(w(t)), \quad \sigma_\e(t)=\CC e_\e(t),\quad -\div \sigma_\e(t)=f(t) \quad \text{ in }\O,$$
and for a.e.\ $t \in [0,T]$
\begin{equation}\label{eq:reg-flow-rule}
\sigma_\e(t) - \e\dot p_\e(t) \in \partial H(\dot p_\e(t)) \quad \text{ in }\O.
\end{equation}
\end{prop}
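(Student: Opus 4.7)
The plan is to rewrite the visco-plastic flow rule \eqref{eq:reg-flow-rule} as an ordinary differential equation in $L^2(\O;\Ms)$ with a globally Lipschitz right-hand side, so that existence and uniqueness follow from the Cauchy-Lipschitz theorem for ODEs with absolutely continuous time-dependence. The first step is to recast \eqref{eq:reg-flow-rule} in closed form. Since $H$ is the support function of $K$, its Legendre-Fenchel conjugate is the indicator function $I_K$, and the duality identity $y\in\partial H(q)\Leftrightarrow q\in\partial I_K(y)=N_K(y)$ transforms \eqref{eq:reg-flow-rule} into $\dot p_\e(t)\in N_K(\sigma_\e(t)-\e\dot p_\e(t))$. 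Setting $q:=\sigma_\e-\e\dot p_\e$, one has $q\in K$ and $\sigma_\e-q=\e\dot p_\e\in N_K(q)$, which is exactly the variational characterization of $q=P_K(\sigma_\e)$, the Euclidean projection of $\sigma_\e$ onto $K$. The flow rule therefore becomes the explicit equation $\dot p_\e=\e^{-1}(\sigma_\e-P_K(\sigma_\e))$, whose right-hand side is $(1/\e)$-Lipschitz in $\sigma_\e$ since orthogonal projections onto convex sets are $1$-Lipschitz.

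The second step is to express $\sigma_\e$ as an affine Lipschitz function of $p_\e$ by solving the equilibrium problem. For $p\in L^2(\O;\Ms)$ and $t\in[0,T]$, Lax-Milgram together with Korn's inequality yield a unique $v=\Phi(t,p)\in H^1(\O;\R^2)$ solving
$$-\div\bigl(\CC(Ev-p)\bigr)=f(t)\ \text{ in }\O,\qquad v=w(t)\ \text{ on }\partial\O.$$
The map $\Psi(t,p):=\CC(E\Phi(t,p)-p)$ is then affine in $p$, and testing the elliptic equation for the difference of two solutions against $\Phi(t,p_1)-\Phi(t,p_2)$ gives $\|\Psi(t,p_1)-\Psi(t,p_2)\|_{L^2}\le C\|p_1-p_2\|_{L^2}$ with a constant independent of $t$; the regularity assumptions \eqref{eq:w}, \eqref{eq:f} furthermore make $t\mapsto\Psi(t,p)$ absolutely continuous on $[0,T]$, uniformly for $p$ in bounded sets of $L^2(\O;\Ms)$. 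Substituting $\sigma_\e(t)=\Psi(t,p_\e(t))$ into the flow equation reduces the whole problem to the Cauchy problem
$$\dot p_\e(t)=F(t,p_\e(t)):=\frac{1}{\e}\bigl(\Psi(t,p_\e(t))-P_K(\Psi(t,p_\e(t)))\bigr),\qquad p_\e(0)=p_{0,\e},$$
with $F(t,\cdot)$ uniformly Lipschitz and $F(\cdot,p)$ absolutely continuous.

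The Cauchy-Lipschitz theorem in the Hilbert space $L^2(\O;\Ms)$ (in the version for vector fields with absolutely continuous time-dependence) then delivers a unique $p_\e\in AC([0,T];L^2(\O;\Ms))$. Defining $u_\e(t):=\Phi(t,p_\e(t))$, $e_\e(t):=Eu_\e(t)-p_\e(t)$, and $\sigma_\e(t):=\CC e_\e(t)$, the kinematic compatibility, the boundary condition, the equilibrium equation, and the flow rule \eqref{eq:reg-flow-rule} are all satisfied by construction, while absolute continuity of $u_\e$ and $e_\e$ is inherited from that of $p_\e$, $f$, $w$ through the linear elasticity solution operator. Compatibility at $t=0$ follows from \eqref{eq:init2}: since $\sigma_0=\CC e_0$ already satisfies the static equilibrium $-\div\sigma_0=f(0)$, the function $u_{0,\e}$ is the unique solution of the elastic problem at $t=0$ associated with the plastic strain $p_{0,\e}=Eu_{0,\e}-e_0$. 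I expect the only substantive obstacle to be the first step: once the differential inclusion \eqref{eq:reg-flow-rule} is converted into an explicit Lipschitz evolution equation via convex duality, the rest of the argument is entirely standard, which is why the authors simply cite \cite{Suquet} without reproducing the proof.
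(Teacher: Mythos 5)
Your proof is correct. The paper itself does not prove this proposition---it is quoted from Suquet's work \cite{Suquet}---but your key step, converting the inclusion \eqref{eq:reg-flow-rule} via conjugate duality into the explicit equation $\dot p_\e=\e^{-1}\big(\sigma_\e-P_K(\sigma_\e)\big)$, is exactly the reformulation the authors record in Remark~\ref{rem:convex-anal} (formulas \eqref{eq:psi*} and \eqref{eq:reg-flow-rule3}), and the remaining reduction to a uniformly Lipschitz ODE in $L^2(\O;\Ms)$ through the affine linear-elasticity solution operator is the standard argument behind the cited result.
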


\begin{rem}\label{rem:convex-anal}
For every $q \in \Ms$, we define the function
$$H_\e(q):=H(q)+\frac{\e}{2}|q|^2.$$
The convex conjugate of $H_\e$ is given, for all $\tau \in \Ms$, by 
$$H^*_\e(\tau)=\frac{|\tau-P_K(\tau)|^2}{2\e},$$
where $P_K$ stands for the orthogonal projection onto the nonempty closed convex set $K$. The function $H^*_\e$ turns out to be of class $\C^1$ and its differential is given by
\begin{equation}\label{eq:psi*}
DH_\e^*(\tau)=\frac{\tau-P_K(\tau)}{\e}.
\end{equation}

With these notation, the flow rule \eqref{eq:reg-flow-rule}, can be equivalently written, for a.e.\ $t \in [0,T]$, as 
$$\sigma_\e(t) \in \partial H_\e(\dot p_\e(t)) \quad \text{ in } \O,$$
or still, by convex analysis,
\begin{equation}\label{eq:reg-flow-rule3}
\dot p_\e(t) =DH^*_\e(\sigma_\e(t)) \quad \text{ in } \O.
\end{equation}
\end{rem}

\medskip 

We will show that the solution $(u_\e,e_\e,p_\e)$ of the visco-plastic model given by Proposition~\ref{prop:perzyna} converges to a solution of the perfectly plastic model, in the sense of Theorem~\ref{thm:nocap2}. 

\begin{prop}\label{prop:convergence}
Assume that \eqref{eq:Omega}--\,\eqref{eq:init2} hold, and in addition that $w \in H^1([0,T];H^1(\O;\R^2))$ and $\chi\in W^{1,\infty}([0,T];L^\infty(\O;\Ms))$. Then, up to a subsequence (not relabeled), $(u_\e,e_\e,p_\e) \wto (u,e,p)$ weakly* in $H^1([0,T];BD(\O))\times H^1([0,T];L^2(\O;\Ms)) \times H^1([0,T];\M(\overline \O;\Ms))$, where $(u,e,p)$ is a solution of the perfectly plastic model as in Theorem~\ref{thm:nocap2}.
\end{prop}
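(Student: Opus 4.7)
The plan is to combine uniform-in-$\e$ a priori estimates with compactness and lower semi-continuity, and then to pass to the limit in each of the relations that characterise a quasi-static perfectly plastic evolution in the sense of Theorem~\ref{thm:nocap2}.

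For the basic energy estimates, I would test the flow rule \eqref{eq:reg-flow-rule} against $\dot p_\e$. Since $\sigma_\e-\e\dot p_\e\in\partial H(\dot p_\e)$ yields $\sigma_\e:\dot p_\e=H(\dot p_\e)+\e|\dot p_\e|^2$, combining this with $\frac{d}{dt}\Q(e_\e)=\int_\O\sigma_\e:\dot e_\e$, the decomposition $\dot e_\e=E\dot u_\e-\dot p_\e$, the equilibrium $-\div\sigma_\e=f$, and the Dirichlet condition, one obtains a Perzyna version of the energy balance \eqref{eq:ee} with an additional non-negative term $\e\int_0^t\!\int_\O|\dot p_\e|^2$. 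Rewriting the work of the body load via $-\div\chi=f$, integrating by parts in time the resulting term $\int\chi:\dot e_\e$ (using $\chi\in W^{1,\infty}([0,T];L^\infty)$), and absorbing $\int\chi:\dot p_\e$ into the dissipation by means of Proposition~\ref{prop:QS1}, a Gronwall argument then provides the uniform bounds
\[
\sup_{t\in[0,T]}\|e_\e(t)\|_{L^2(\O;\Ms)}\leq C,\qquad \int_0^T\|\dot p_\e(s)\|_{\M(\overline\O;\Ms)}\,ds\leq C,\qquad \e\int_0^T\|\dot p_\e\|_{L^2(\O;\Ms)}^2\,ds\leq C.
\]
The bound on $u_\e$ in $L^\infty([0,T];BD(\O))$ follows from $Eu_\e=e_\e+p_\e$, the boundary condition, and the embedding $BD(\O)\hookrightarrow L^2(\O;\R^2)$ in dimension two.

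To upgrade this to the $L^2$-in-time bounds needed for weak-$*$ compactness in $H^1$, I would differentiate the equilibrium in time (in the finite-difference sense) and test against $\dot u_\e-\dot w$. Using the equivalent form \eqref{eq:reg-flow-rule3} with $DH_\e^*$ being the gradient of the smooth convex function $H_\e^*$, the cross term rewrites as $\int_\O\dot\sigma_\e:\dot p_\e=\frac{d}{dt}\int_\O H_\e^*(\sigma_\e)$, whose time integral is non-negative (since $H_\e^*(\sigma_0)=0$ by $\sigma_0\in\K$). Combining with the coercivity $\int_\O\dot\sigma_\e:\dot e_\e\geq c_0\|\dot e_\e\|_{L^2}^2$ and with the safe-load rewriting $\int\dot f\cdot(\dot u_\e-\dot w)=\int\dot\chi:E(\dot u_\e-\dot w)$, one then absorbs into the left-hand side via Young's inequality to obtain a uniform $L^2([0,T];L^2(\O;\Ms))$-bound on $\dot e_\e$. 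The bounds on $\dot p_\e$ in $L^2([0,T];\M(\overline\O;\Ms))$ and on $\dot u_\e$ in $L^2([0,T];BD(\O))$ then follow from the flow rule and the kinematic compatibility.

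Once the bounds are in hand, Banach-Alaoglu provides a subsequence with $(u_\e,e_\e,p_\e)\wto(u,e,p)$ weakly$*$ in the stated spaces. The kinematic compatibility, boundary trace, and equilibrium relations pass to the limit by linearity. Plastic admissibility $\sigma(t)\in\K$ follows because $\sigma_\e-\e\dot p_\e\in K$ pointwise and $\e\dot p_\e\to 0$ strongly in $L^2([0,T]\times\O;\Ms)$, thanks to $\|\e\dot p_\e\|_{L^2}^2=\e\cdot(\e\|\dot p_\e\|_{L^2}^2)\to 0$. For the energy balance, the left-hand side of the Perzyna version is weakly$*$ lower semicontinuous (by convexity of $\Q$ and by the Reshetnyak Theorem applied to $\HH$), the non-negative Perzyna penalty is discarded, and the right-hand side passes to the limit by strong convergence of $w$, $f$, and $\chi$. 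The reverse inequality, which is needed to recover equality in \eqref{eq:ee} and hence Hill's principle \eqref{eq:fr}, follows from the duality $\HH(\dot p)\geq\langle\sigma,\dot p\rangle$ valid for $\sigma\in\K\cap\mathcal S$, together with an integration by parts in time and the identification of the initial data.

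The main obstacle is the uniform-in-$\e$ control of $\|\dot p_\e\|_{L^2([0,T];\M)}$: the elementary Perzyna estimate only yields $L^1$-in-time information on this quantity and an $L^2$-in-time bound on $\sqrt{\e}\,\dot p_\e$, both of which are insufficient for the claimed $H^1$ convergence. Overcoming this requires the differentiated-in-time equilibrium argument above, where one crucially exploits that $DH_\e^*$ is monotone and that the data $w$, $\chi$, and $f$ have the strengthened regularity assumed in the statement; the dimension-two embedding of $BD(\O)$ into $L^2(\O;\R^2)$ is what makes the pairing $\int\dot f\cdot(\dot u_\e-\dot w)$ amenable to absorption.
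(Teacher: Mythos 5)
Your overall strategy coincides with the paper's: the Perzyna energy balance combined with the safe-load coercivity (Proposition~\ref{prop:QS1}) for the first estimates; the pairing of $E\dot u_\e=\dot e_\e+\dot p_\e$ with $\dot\sigma_\e-\dot\chi$ together with the identity $\int_\O\dot\sigma_\e:\dot p_\e\,dx=\frac{d}{dt}\int_\O H_\e^*(\sigma_\e)\,dx$ and $H^*_\e(\sigma_0)=0$ for the $L^2$-in-time bound on $\dot e_\e$; the strong convergence $\e\dot p_\e\to0$ in $L^2$ for the stress constraint; and lower semicontinuity plus a converse (duality/stability) inequality for the energy balance.

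The one step that would fail as written is the claim that the uniform bound on $\dot p_\e$ in $L^2([0,T];\M(\overline\O;\Ms))$ ``follows from the flow rule and the kinematic compatibility.'' The flow rule gives $\dot p_\e=(\sigma_\e-P_K(\sigma_\e))/\e$, and the first estimate only controls $\|\sigma_\e-P_K(\sigma_\e)\|_{L^2(\O\times(0,T))}$ by $C\sqrt\e$, so this route yields $\|\dot p_\e\|_{L^2(\O\times(0,T))}\le C/\sqrt\e$, which blows up; and the kinematic compatibility would require an independent bound on $E\dot u_\e$, which is precisely what one is trying to establish (in the paper the $L^2(0,T;BD(\O))$ bound on $\dot u_\e$ is \emph{deduced} from the bounds on $\dot e_\e$ and $\dot p_\e$ via Poincar\'e--Korn, not the other way around). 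The correct argument --- the paper's ``third energy estimate'' --- re-runs the energy balance between two arbitrary times $t_1<t_2$ and applies Proposition~\ref{prop:QS1} once more to obtain $C_{\delta,\alpha}\|p_\e(t_2)-p_\e(t_1)\|_{L^1(\O;\Ms)}\le\int_{t_1}^{t_2}h_\e(s)\,ds$, where $h_\e$ is controlled in $L^2(0,T)$ by $\|\dot e_\e(\cdot)\|_{L^2(\O;\Ms)}$ and the data; the abstract difference-quotient lemma \cite[Proposition~A.3]{Br} then upgrades this to $\sup_\e\|\dot p_\e\|_{L^2(0,T;L^1(\O;\Ms))}<+\infty$. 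All the ingredients for this are already present in your outline (the safe-load coercivity and the second estimate), so the gap is one of justification rather than of a missing idea, but it must be filled: without it the claimed weak* compactness of $p_\e$ in $H^1([0,T];\M(\overline\O;\Ms))$, and hence the convergence statement itself, is not established.
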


Note that this result was already proven in \cite{Suquet} for different type of elasticity sets that are bounded in the direction of deviatoric stresses (see also \cite{DMDSM}). However we give below a slightly different and simplified argument since some finer estimates established along the proof will be useful in that of our regularity result Theorem~\ref{thm:main}.

\subsection{{\it A priori} estimates}

We first establish  some uniform {\it a priori} estimates which will enable one to get weak compactness on the families $(u_\e)_{\e>0}$, $(e_\e)_{\e>0}$, and $(p_\e)_{\e>0}$.

\subsubsection{First energy estimates}

Standard arguments show that the following energy balance holds: for all $t \in [0,T]$,
\begin{multline}\label{eq:energy-balance0}
\Q(e_\e(t)) + \int_0^t \HH(\dot p_\e(s))\, ds + \e \int_0^t \int_\O |\dot p_\e|^2\, dx \, ds\\
 = \Q(e_0) + \int_0^t \int_\O \sigma_\e:E\dot w\, dx \, ds  + \int_0^t \int_\O f\cdot (\dot u_\e-\dot w)\, dx \, ds,
 \end{multline}
or still, using the safe load condition \eqref{eq:safe}, together with an integration by parts in time,
\begin{multline}\label{eq:energy-balance}
\Q(e_\e(t)) + \int_0^t \HH(\dot p_\e(s))\, ds -\int_0^t \int_\O \chi:\dot p_\e\, dx \, ds+ \e \int_0^t \int_\O |\dot p_\e|^2\, dx \, ds\\
 = \Q(e_0) + \int_0^t \int_\O \sigma_\e:E\dot w\, dx \, ds  - \int_0^t \int_\O\dot \chi:( e_\e - E w)\, dx \, ds\\
 + \int_\O \chi(t):( e_\e(t) - E w(t))\, dx- \int_\O \chi(0):( e_0 - E w(0))\, dx.
\end{multline}
Therefore, an application of Proposition~\ref{prop:QS1} leads to the following first energy estimates:
\begin{equation}\label{eq:first-estim}
\sup_{\e>0}\left(\|e_\e\|_{L^\infty(0,T;L^2(\O;\Ms))}+ \|\dot p_\e\|_{L^1(0,T;L^1(\O;\Ms))} \right.
\left.+ \sqrt\e\|\dot p_\e\|_{L^2(0,T;L^2(\O;\Ms))}\right)<+\infty.
\end{equation}

\subsubsection{Second energy estimates}

Writing the additive decomposition for the rates yields $E\dot u_\e=\dot e_\e + \dot p_\e$ in $\O \times (0,T)$,
taking the scalar product with $\dot \sigma_\e-\dot \chi$ and integrating over $\O \times (0,T)$ leads to
\begin{multline}\label{MGadd}
2\int_0^t \Q(\dot e_\e(s)-\CC^{-1}\dot \chi(s))\, ds\\
 = \int_0^t \int_\O (\dot\sigma_\e-\dot \chi) : E\dot u_\e\, dx \, ds - \int_0^t \int_\O(\dot \sigma_\e-\dot\chi) : \dot p_\e\, dx \, ds-\int_0^t\int_\O \CC^{-1}\dot \chi:(\dot \sigma_\e - \dot \chi)\, dx \, ds.
 \end{multline}
By integrating by parts in space and by using the equilibrium equation, together with the safe load condition \eqref{eq:safe}, 
we can rewrite the first integral at the right hand side of \eqref{MGadd} as 
$$\int_0^t \int_\O(\dot \sigma_\e-\dot\chi) : E\dot u_\e\, dx \, ds=\int_0^t \int_\O(\dot \sigma_\e-\dot\chi) : E\dot w\, dx \, ds.$$
Using the flow rule \eqref{eq:reg-flow-rule3}, we have $\dot p_\e=DH_\e^*(\sigma_\e)$, so that, owing to the chain rule formula and a derivation under the integral sign, we get for a.e.\ $t \in [0,T]$,
$$\int_\O \dot \sigma_\e(t):\dot p_\e(t)\, dx=\int_\O DH_\e^*(\sigma_\e):\dot \sigma_\e(t)\, dx=\frac{d}{dt}\int_\O H_\e^*(\sigma_\e(t))\, dx.$$
Thus, since $\sigma_0 \in \K$,
$$-\int_0^t\int_\O \dot \sigma_\e(t):\dot p_\e(t)\, dx=-\int_\O H_\e^*(\sigma_\e(t))\, dx+\int_\O H_\e^*(\sigma_0)\, dx \leq \int_\O H_\e^*(\sigma_0)\, dx=0.$$
Finally, using the safe load condition \eqref{eq:safe}, together with \eqref{eq:coerc-Q} and  H\"older's inequality, we obtain
from \eqref{MGadd} the following estimate:
\begin{multline*}
\|\dot \sigma_\e-\dot \chi\|^2_{L^2(0,T;L^2(\O;\Ms))}  \leq \|\dot \chi\|_{L^\infty(\O \times (0,T);\Ms)} \|\dot p_\e\|_{L^1(0,T;L^1(\O;\Ms))}\\
+C\left(\|E\dot w\|_{L^2(0,T;L^2(\O;\Ms))} + \|\dot \chi\|_{L^2(0,T;L^2(\O;\Ms))}\right)\|\dot \sigma_\e-\dot \chi\|_{L^2(0,T;L^2(\O;\Ms))},
\end{multline*}
where $C>0$ is independent of $\e$. By the first energy estimate \eqref{eq:first-estim} we infer that
\begin{equation}\label{eq:second-estim}
\sup_{\e>0}\|\dot \sigma_\e\|_{L^2(0,T;L^2(\O;\Ms))} <+\infty.
\end{equation}

\subsubsection{Third energy estimates}

Writing next the energy balance \eqref{eq:energy-balance} between two arbitrary times $t_1$ and $t_2$, and integrating by parts in times  yield
\begin{multline*}
\Q(e_\e(t_2)) + \int_{t_1}^{t_2} \HH(\dot p_\e(s))\, ds -\int_{t_1}^{t_2} \int_\O \chi:\dot p_\e\, dx \, ds+ \e \int_{t_1}^{t_2} \int_\O |\dot p_\e|^2\, dx \, ds\\
 = \Q(e_\e(t_1)) + \int_{t_1}^{t_2}  \int_\O \sigma_\e:E\dot w\, dx \, ds  + \int_{t_1}^{t_2}  \int_\O \chi:( \dot e_\e - E \dot w)\, dx \, ds.
 \end{multline*}
By applying Proposition~\ref{prop:QS1}, together with estimate \eqref {eq:first-estim}, 
we deduce that
\begin{multline*}
C_{\delta,\alpha}\|p_\e(t_2)-p_\e(t_1)\|_{L^1(\O;\Ms)}\leq C_{\delta,\alpha}\int_{t_1}^{t_2} \|\dot p_\e(s)\|_{L^1(\O;\Ms)}\, ds \\
\leq  \int_{t_1}^{t_2} \HH(\dot p_\e(s))\, ds -\int_{t_1}^{t_2} \int_\O \chi:\dot p_\e\, dx \, ds
 \leq\frac12 \int_\O (\sigma_\e(t_1)+\sigma_\e(t_2)): (e_\e(t_1)-e_\e(t_2)) \, dx\\
 + \int_{t_1}^{t_2}  \int_\O \sigma_\e:E\dot w\, dx \, ds  + \int_{t_1}^{t_2}  \int_\O \chi:( \dot e_\e - E \dot w)\, dx \, ds\leq   \int_{t_1}^{t_2}h_\e(s)\, ds,
\end{multline*}
where $h_\e \in L^2(0,T)$ is defined, for a.e.\ $s \in [0,T]$, by
$$h_\e(s):
=M \left(\|\dot e_\e(s)\|_{L^2(\O;\Ms)} +\|E\dot w(s)\|_{L^2(\O;\Ms)} \right)-\int_\O \chi(s):E\dot w(s)\, dx,$$
and
$$
M:=\sup_{\e>0}\|\sigma_\e\|_{L^\infty(0,T;L^2(\O;\Ms))} +\|\chi\|_{L^\infty(0,T;L^2(\O;\Ms))}.
$$
According to \cite[Proposition~A.3]{Br} and  \eqref{eq:second-estim}, this implies that $p_\e \in H^1([0,T];L^1(\O;\Ms))$,
\begin{equation}\label{eq:third-estim}
\sup_{\e>0}\|\dot p_\e\|_{L^2(0,T;L^1(\O;\Ms))} \leq \sup_{\e>0}\|h_\e\|_{L^2(0,T)} <+\infty,
\end{equation}
and owing to the Poincar\'e-Korn inequality (see \cite[Chap.\ 2, Rmk.\ 2.5(ii)]{T}), that 
\begin{equation}\label{eq:third-estim-bis}
\sup_{\e>0}\|\dot u_\e\|_{L^2(0,T;BD(\O))}<+\infty.
\end{equation}

\subsection{Passage to the limit in $\e$} 

According to \eqref{eq:second-estim}, \eqref{eq:third-estim}, and \eqref{eq:third-estim-bis}, there exists a subsequence (not relabeled) and functions $u \in H^1([0,T];BD(\O))$, $e \in H^1([0,T];L^2(\O;\Ms))$, and $p \in H^1([0,T];\M(\overline \O;\Ms))$ such that
$$\begin{cases}
u_\e \wto u & \text{ weakly* in }H^1([0,T];BD(\O)),\\
e_\e \wto e & \text{ weakly in }H^1([0,T];L^2(\O;\Ms)),\\
p_\e \wto p & \text{ weakly* in }H^1([0,T];\M(\overline \O;\Ms)).
\end{cases}$$
An application of the Ascoli-Arzel\`a Theorem also shows that
$$\begin{cases}
u_\e(t) \wto u(t) & \text{ weakly* in }BD(\O),\\
e_\e(t) \wto e(t) & \text{ weakly in }L^2(\O;\Ms),\\
p_\e(t) \wto p(t) & \text{ weakly* in }\M(\overline \O;\Ms)
\end{cases}$$
for all $t \in [0,T]$.
Since $(u_\e(0),e_\e(0),p_\e(0))=(u_{0,\e},e_0,p_{0,\e})$, by passing to the limit we deduce that the initial condition $(u(0),e(0),p(0))=(u_{0},e_0,p_{0})$ is satisfied. 
Note also that, according to \cite[Lemma~2.1]{DMDSM}, we have that $(u(t),e(t),p(t)) \in \A(w(t))$ for all $t \in [0,T]$. Moreover, defining $\sigma:=\CC e$, we get that $-\div \sigma(t)=f(t)$ in $\O$ for all $t \in [0,T]$. 

\medskip

We then prove the validity of the stress constraint. Owing to \eqref{eq:first-estim} and \eqref{eq:second-estim}, the fact that $P_K$ is $1$-Lipschitz and that $0 \in K$, up to another subsequence, we have
$$\begin{cases}
P_K(\sigma_\e) \wto \tau & \text{ weakly in } H^1([0,T];L^2(\O;\Ms)),\\
P_K(\sigma_\e(t)) \wto \tau(t) & \text{ weakly in $L^2(\O;\Ms)$ for all $t \in [0,T]$},
\end{cases}$$
for some $\tau \in H^1([0,T];L^2(\O;\Ms))$ with $\tau(t) \in \K$ for all $t \in [0,T]$. According to the flow rule \eqref{eq:reg-flow-rule3}, \eqref{eq:psi*}, and \eqref{eq:first-estim}, we get
$$\sigma_\e-P_K(\sigma_\e)=\e\dot p_\e \to 0 \text{ strongly in }L^2(0,T;L^2(\O;\Ms)),$$
which implies that $\sigma=\tau$ and thus, that $\sigma(t) \in \K$ for all $t \in [0,T]$.

\medskip

It remains to show the energy balance. According to the weak convergences established so far, it is possible to pass to the lower limit in the energy balance \eqref{eq:energy-balance0} to get, for all $t \in [0,T]$,
$$\Q(e(t)) + \int_0^t \HH (\dot p(s))\, ds \leq \Q(e_0) 
  + \int_0^t\int_\O \sigma:E\dot w\, dx \, ds +\int_0^t \int_\O f\cdot (\dot u-\dot w)\, dx\, ds.$$
  
 The converse inequality, and thus equality \eqref{eq:ee}, can be proved using a standard argument of rate independent processes (see \cite[Theorem~4.7]{DMDSM}) by noticing that the conditions 
 $$\sigma(t) \in \K \cap \mathcal S,\quad -\div \sigma(t)=f(t) \text{ in }\O$$
 are equivalent to the minimality property
 $$\Q(e(t)) -\int_\O f(t) \cdot u(t)\, dx \leq \Q(\eta) + \HH(q-p(t))-\int_\O f(t)\cdot v\, dx$$
 for all $(v,\eta,q) \in \A(w(t))$. This can be seen by adapting the argument of \cite[Theorem~3.6]{DMDSM}, which only requires the stress-strain duality pairing to be well defined.

\subsection{Higher regularity properties}

In the proof of our regularity result Theorem~\ref{thm:main}, we will need some higher regularity properties for the solution of the visco-plastic model.
\begin{prop}\label{prop:higher-reg}
Assume that \eqref{eq:Omega}--\eqref{eq:init2} hold, and, in addition, that $w \in H^1([0,T];H^2(\O;\R^2))$, $\chi\in W^{1,\infty}([0,T];L^\infty(\O;\Ms)) \cap H^1([0,T];H^1(\O;\Ms))$, $f \in  L^\infty(0,T;H^1(\O;\R^2))$, and $e_0 \in H^1_{\rm loc}(\O;\Ms)$. Then 
$$\dot u_\e \in L^2(0,T;H^2_{\rm loc}(\O;\R^2)), \quad e_\e,\; \sigma_\e \in H^1([0,T];H^1_{\rm loc}(\O;\Ms)), \quad \dot p_\e\in L^2(0,T;H^1_{\rm loc}(\O;\Ms)).$$
\end{prop}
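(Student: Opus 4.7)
My strategy is Nirenberg's difference-quotient method in space, exploiting the fact that for fixed $\e>0$ the flow rule $\dot p_\e=DH^*_\e(\sigma_\e)$ has a globally Lipschitz and monotone right-hand side (with Lipschitz constant of order $1/\e$). Consequently, the Perzyna system effectively decouples into a parabolic equation for $\sigma_\e$ coupled to a Lam\'e-type elliptic equation for $\dot u_\e$. I will first establish $\sigma_\e\in L^\infty(0,T;H^1_{\rm loc}(\O;\Ms))$, then upgrade $\dot u_\e$ to $L^2(0,T;H^2_{\rm loc})$ by elliptic regularity, and finally read off the remaining regularity for $\dot\sigma_\e$, $\dot e_\e$, and $\dot p_\e$ from these two.

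For the first stage, combining $E\dot u_\e=\dot e_\e+\dot p_\e$, $\dot e_\e=\CC^{-1}\dot\sigma_\e$, and \eqref{eq:reg-flow-rule3} yields the identity
$$
\CC^{-1}\dot\sigma_\e+DH^*_\e(\sigma_\e)=E\dot u_\e\quad\text{in }\O.
$$
Fix $\psi\in\C^\infty_c(\O)$ and a coordinate direction $e_k$, apply the spatial difference quotient $\Delta_h v(x):=(v(x+he_k)-v(x))/h$ for $h$ small compared to $\dist(\text{supp}\,\psi,\partial\O)$, take the scalar product with $\psi^2\Delta_h\sigma_\e$, and integrate over $\O\times(0,t)$. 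The time-derivative term integrates to $\tfrac12\int_\O\CC^{-1}|\Delta_h\sigma_\e(t)|^2\psi^2\,dx$ up to its initial value, which is controlled by $\sigma_0=\CC e_0$ together with the hypothesis $e_0\in H^1_{\rm loc}$. The nonlinear term $\int\Delta_h DH^*_\e(\sigma_\e){:}\Delta_h\sigma_\e\,\psi^2$ is nonnegative by monotonicity of $\nabla H^*_\e$. On the right-hand side, integration by parts in space together with the spatial differenced equilibrium $-\div\Delta_h\sigma_\e=\Delta_h f$ gives
$$
\int_\O E\Delta_h\dot u_\e{:}\Delta_h\sigma_\e\,\psi^2\,dx=\int_\O\Delta_h\dot u_\e\cdot\Delta_h f\,\psi^2\,dx-2\int_\O\psi\,(\Delta_h\sigma_\e\nabla\psi)\cdot\Delta_h\dot u_\e\,dx.
$$
Since $f\in L^\infty(0,T;H^1(\O;\R^2))$, $\|\Delta_h f\|_{L^2(0,T;L^2)}$ is uniformly bounded by $\|\nabla f\|_{L^2(0,T;L^2)}$; and since the Perzyna solution of Proposition~\ref{prop:perzyna} satisfies $\dot u_\e\in L^2(0,T;H^1(\O;\R^2))$ (obtained from $\dot e_\e,\dot p_\e\in L^2(0,T;L^2)$ for \emph{fixed} $\e$ via Korn's inequality), $\|\Delta_h\dot u_\e\|_{L^2(0,T;L^2(\text{supp}\,\psi))}$ is uniformly bounded by $\|\nabla\dot u_\e\|_{L^2(0,T;L^2)}$. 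A Young's inequality absorbs a $\delta\int_0^t\|\psi\Delta_h\sigma_\e\|_{L^2}^2\,ds$ term, and Gr\"onwall's lemma then yields a bound on $\|\psi\Delta_h\sigma_\e\|_{L^\infty(0,T;L^2)}$ uniform in $h$. Passing to the limit $h\to 0$ and varying $\psi,k$ gives $\sigma_\e\in L^\infty(0,T;H^1_{\rm loc})$, hence $e_\e=\CC^{-1}\sigma_\e\in L^\infty(0,T;H^1_{\rm loc})$, and, by the chain rule for Sobolev composition with the globally Lipschitz map $DH^*_\e$, also $\dot p_\e=DH^*_\e(\sigma_\e)\in L^\infty(0,T;H^1_{\rm loc})$.

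For the second stage, differentiating the equilibrium in time gives $-\div\dot\sigma_\e=\dot f$, and substituting $\dot\sigma_\e=\CC E\dot u_\e-\CC DH^*_\e(\sigma_\e)$ produces the Lam\'e-type equation
$$
-\div(\CC E\dot u_\e)=\dot f-\div(\CC DH^*_\e(\sigma_\e))\quad\text{in }\O.
$$
The hypothesis $\chi\in H^1([0,T];H^1(\O;\Ms))$ combined with $-\div\chi=f$ gives $\dot f=-\div\dot\chi\in L^2(0,T;L^2(\O))$, and the first stage gives $\div(\CC DH^*_\e(\sigma_\e))\in L^\infty(0,T;L^2_{\rm loc})$, so the right-hand side is in $L^2(0,T;L^2_{\rm loc})$. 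Standard interior elliptic regularity for the Lam\'e operator (again via Nirenberg) then yields $\dot u_\e\in L^2(0,T;H^2_{\rm loc}(\O;\R^2))$. Writing $\dot\sigma_\e=\CC E\dot u_\e-\CC DH^*_\e(\sigma_\e)$ and combining the two stages gives $\dot\sigma_\e\in L^2(0,T;H^1_{\rm loc})$, whence $e_\e,\sigma_\e\in H^1([0,T];H^1_{\rm loc})$, and in particular $\dot p_\e=DH^*_\e(\sigma_\e)\in L^2(0,T;H^1_{\rm loc})$ as required.

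The main technical obstacle is closing the first-stage estimate despite the cross term $\int\psi\nabla\psi\cdot(\Delta_h\sigma_\e\,\Delta_h\dot u_\e)$ produced by the cutoff; this is resolved by the Young-then-Gr\"onwall argument above, and relies crucially on the \emph{a priori} regularity $\dot u_\e\in L^2(0,T;H^1)$ provided by the viscous regularization, which bounds $\Delta_h\dot u_\e$ locally in $L^2$ uniformly in $h$. All estimates here depend on $\e$ through the Lipschitz constant $1/\e$ of $DH^*_\e$ and so are \emph{not} uniform in $\e$; the $\e$-uniform refinement, which is the heart of the regularity theorem, is carried out separately in Proposition~\ref{prop:unif-bound} and is where the planar restriction $n=2$ enters.
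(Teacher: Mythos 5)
Your proof is correct, and it follows the same basic route as the paper (spatial difference quotients plus the monotonicity and $1/\e$-Lipschitz bounds for the Perzyna flow rule), with one step packaged differently. Your first stage is exactly the paper's ``fourth energy estimate'': testing the spatially differenced equilibrium equation against $\varphi^2\partial_k^h\dot u_\e$ is, after integration by parts, the same computation as your pairing of the differenced identity $\CC^{-1}\dot\sigma_\e+DH^*_\e(\sigma_\e)=E\dot u_\e$ with $\psi^2\Delta_h\sigma_\e$; both discard the plastic term by monotonicity of $\partial H_\e$, control the cutoff cross terms through the fixed-$\e$ bound $\dot u_\e\in L^2(0,T;H^1(\O;\R^2))$, and control the initial term by $e_0\in H^1_{\rm loc}$. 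The genuine difference is in the second stage. The paper runs a second difference-quotient energy estimate, pairing $E(\partial_k^h\dot u_\e)=\partial_k^h\dot e_\e+\partial_k^h\dot p_\e$ with $\varphi^2(\partial_k^h\dot\sigma_\e-\partial_k^h\dot\chi)$ and using $|\partial_k^h\dot p_\e|\le|\partial_k^h\sigma_\e|/\e$, thereby obtaining $\partial_k\dot e_\e\in L^2(0,T;L^2_{\rm loc})$ with an explicit $C/\e^2$ bound and only then $\dot u_\e\in L^2(0,T;H^2_{\rm loc})$ via the additive decomposition and Korn. You reverse the order: you read off the Lam\'e system $-\div(\CC E\dot u_\e)=\dot f-\div(\CC\, DH^*_\e(\sigma_\e))$, whose right-hand side lies in $L^2(0,T;L^2_{\rm loc})$ thanks to $\dot f=-\div\dot\chi$ and the Lipschitz composition $DH^*_\e(\sigma_\e)\in L^\infty(0,T;H^1_{\rm loc})$, invoke interior elliptic regularity (valid since $\mu>0$ and $\lambda+\mu>0$ give the Legendre--Hadamard condition), and then recover $\dot\sigma_\e\in L^2(0,T;H^1_{\rm loc})$ from the constitutive relation. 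The two versions are quantitatively equivalent --- interior regularity for the Lam\'e operator by Nirenberg's method is precisely the paper's fifth estimate --- but yours outsources the computation to a standard elliptic result, while the paper's self-contained estimate makes the rate of blow-up in $\e$ explicit; neither is needed uniformly in $\e$, as you correctly note.
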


Let us introduce the following notation: given a generic function $\phi:\R^2 \to \R$, we write
$$\partial_k^h \phi(x):=\frac{\phi(x+he_k)-\phi(x)}{h} \quad \text{for $k \in \{1,2\}$ and $h>0$ small.}$$

\subsubsection{Fourth energy estimates}

Using the linearity of the equilibrium equations, we get that for all $t \in [0,T]$,
$$-\div (\partial_k^h \sigma_\e(t)) =\partial_k^h f(t)\quad \text{ a.e.\ in }\{x \in \O : \dist(x,\partial \O) >h\}.$$
Let $\varphi \in \C^\infty_c(\O)$ be a cut-off function with $h < \dist({\rm supp \,} \varphi,\partial \O)$. We multiply the previous equation by $\varphi^2 \partial_k^h \dot u_\e$ and integrate by parts in the space variables. In this way we get
$$\int_0^t\int_\O \varphi^2 \partial_k^h \sigma_\e : \partial_k^h E \dot u_\e \, dx\, ds +\int_0^t \int_\O \partial_k^h \sigma_\e : (\nabla \varphi^2 \odot \partial_k^h \dot u_\e)\, dx\; ds=\int_0^t\int_\O \varphi^2 \partial_k^h f \cdot  \partial_k^h \dot u_\e\, dx\, ds.$$
Using the additive decomposition $\partial_k^h E \dot u_\e=\partial_k^h\dot e_\e + \partial_k^h\dot p_\e$, 
the regularized flow rule \eqref{eq:reg-flow-rule}, and the monotonicity of the subdifferential of a convex function, we obtain
\begin{multline*}
 \Q(\varphi  \partial_k^h e_\e(t))+\e \int_0^t \int_\O\varphi^2 |\partial_k^h \dot p_\e|^2\, dx \, ds\\
 \leq  \Q(\varphi  \partial_k^h e_0)+\int_0^t\int_\O \varphi^2 \partial_k^h f \cdot  \partial_k^h \dot u_\e\, dx\, ds -2\int_0^t \int_\O \varphi \partial_k^h \sigma_\e : (\nabla \varphi \odot \partial_k^h \dot u_\e)\, dx\; ds\\
 \leq C\left(\|e_0\|^2_{H^1_{\rm loc}(\O;\Ms)} + \|f\|_{L^\infty(0,T;H^1(\O;\R^2))} \|\nabla \dot u_\e\|_{L^1(0,T;L^2(\O;\Ms))}\right.\\
 \left. + \|\varphi \partial_k^h \sigma_\e\|_{L^\infty(0,T;L^2(\O;\Ms))} \|\nabla \dot u_\e\|_{L^1(0,T;L^2(\O;\Ms))} \right),
\end{multline*}
where, here and in the following, $C$ denotes a positive constant independent of $h$ and $\e$, but possibly depending on $\varphi$. Thanks to \eqref{eq:coerc-Q} and Young's inequality, we infer that
\begin{multline*}
 \|\varphi \partial_k^h \sigma_\e\|^2_{L^\infty(0,T;L^2(\O;\Ms))}+\e \|\varphi \partial_k^h \dot p_\e\|^2_{L^2(0,T;L^2(\O;\Ms))}\\
 \leq C\left(\|e_0\|^2_{H^1_{\rm loc}(\O;\Ms)} + \|f\|^2 _{L^\infty(0,T;H^1(\O;\R^2))}+\|\nabla \dot u_\e\|^2_{L^1(0,T;L^2(\O;\Ms))} \right).
\end{multline*}
According to Korn's inequality in $H^1_0(\O;\R^2)$, we have
$$\int_\O |\nabla \dot u_\e(t)-\nabla \dot w(t)|^2\, dx \leq 2 \int_\O |E \dot u_\e(t)-E \dot w(t)|^2\, dx,$$
hence
\begin{multline*}
 \|\varphi \partial_k^h \sigma_\e\|^2_{L^\infty(0,T;L^2(\O;\Ms))}+\e \|\varphi \partial_k^h \dot p_\e\|^2_{L^2(0,T;L^2(\O;\Ms))}\\
 \leq C\left(\|e_0\|^2_{H^1_{\rm loc}(\O;\Ms)} + \|f\|^2 _{L^\infty(0,T;H^1(\O;\R^2))}\|+\|E \dot u_\e\|^2_{L^2(0,T;L^2(\O;\Ms))} + \|\dot w\|^2_{L^2(0,T;H^1(\O;\R^2))} \right).
\end{multline*}
Thus, using \eqref{eq:first-estim} and \eqref{eq:second-estim}, we get that
$$ \|\varphi \partial_k^h \sigma_\e\|^2_{L^\infty(0,T;L^2(\O;\Ms))}+\e \|\varphi \partial_k^h \dot p_\e\|^2_{L^2(0,T;L^2(\O;\Ms))} \leq \frac{C}{\e}.$$
Letting $h \searrow 0$, we then obtain that for every open set $\O' \subset\subset \O$, there exists a constant $C>0$ (independent of $\e$) such that
\begin{equation}\label{eq:fourth-estim}
 \| \partial_k \sigma_\e\|^2_{L^\infty(0,T;L^2(\O';\Ms))}+\e \| \partial_k \dot p_\e\|^2_{L^2(0,T;L^2(\O';\Ms))} \leq \frac{C}{\e}.
\end{equation}
In particular, we get the following higher regularity properties:
\begin{equation}\label{eq:higher-reg1}
e_\e,\, \sigma_\e \in L^\infty(0,T;H^1_{\rm loc}(\O;\Ms)), \quad \dot p_\e \in L^2(0,T;H^1_{\rm loc}(\O;\Ms)).
\end{equation}

\subsubsection{Fifth energy estimate}

Using the additive decomposition, we have 
 $$E(\partial_k^h\dot u_\e)=\partial_k^h\dot e_\e + \partial_k^h\dot p_\e \quad \text{ in }\{x \in \O : \dist(x,\partial \O) >h\}\times (0,T).$$
 Let $\varphi \in \C^\infty_c(\O)$ be a cut-off function with $h < \dist({\rm supp \,} \varphi,\partial \O)$.  Taking the scalar product of the previous relation with $\varphi^2 (\partial_k^h \dot \sigma_\e-\partial_k^h \dot \chi)$ and integrating over $\O \times (0,T)$ yield
 \begin{multline*}
 2\int_0^T \Q \big(\varphi (\partial_k^h \dot e_\e(s)-\CC^{-1}\partial_k^h \dot \chi(s))\big)\, ds
 = \int_0^T \int_\O\varphi^2 (\partial_k^h \dot \sigma_\e-\partial_k^h\dot \chi):E(\partial_k^h\dot u_\e)\, dx \, ds\\
 -\int_0^T \int_\O\varphi^2 (\partial_k^h \dot \sigma_\e-\partial_k^h\dot \chi):\partial_k^h\dot p_\e\, dx \, ds- \int_0^T \int_\O\varphi^2 (\CC^{-1}\partial_k^h \dot \chi) :  (\partial_k^h \dot \sigma_\e-\partial_k^h \dot \chi)\, dx\, ds.
 \end{multline*}
We now look at the first integral on the right hand side.
Integrating by parts in space and using the equilibrium equations, together with the safe load condition \eqref{eq:safe},
yield
$$\int_0^T \int_\O\varphi^2 (\partial_k^h \dot \sigma_\e-\partial_k^h\dot \chi):E(\partial_k^h\dot u_\e)\, dx \, ds=\int_0^T \int_\O\varphi^2 (\partial_k^h \dot \sigma_\e-\partial_k^h\dot \chi):E(\partial_k^h\dot w)\, dx \, ds.$$
According to \eqref{eq:psi*}, $DH_\e^*$ is Lipschitz continuous with Lipschitz constant less than $1/\e$. Therefore, the flow rule $\dot p_\e=DH_\e^*(\sigma_\e)$ implies that 
$|\partial_k^h \dot p_\e|\leq |\partial_k^h\sigma_\e|/\e$ in $\O \times (0,T)$. Therefore, the Cauchy-Schwarz inequality, together with \eqref{eq:coerc-Q}, yields
\begin{multline*}
\| \varphi(\partial_k^h \dot \sigma_\e-\partial_k^h \dot \chi)\|_{L^2(0,T;L^2(\O;\Ms))}\\ \leq
C\left(\frac{1}{\e} \|\partial_k^h\sigma_\e\|_{L^2(0,T;L^2(\O;\Ms))} +\|w\|_{H^1([0,T];H^2(\O;\R^2))}+\|\chi\|_{H^1([0,T];H^1(\O;\Ms))} \right),\end{multline*}
where $C>0$ is a constant independent of $h$ and $\e$. Therefore using \eqref{eq:fourth-estim}, and passing to the limit as $h \to 0$, we get the following estimate: for every open set $\O'\subset\subset \O$, there exists a constant $C>0$ independent of $\e$ such that
$$\| \partial_k \dot e_\e\|_{L^2(0,T;L^2(\O';\Ms))} \leq\frac{C}{\e^2}.$$
As a consequence of this last estimate, together with \eqref{eq:higher-reg1}, the additive decomposition $Eu_\e=e_\e+p_\e$, and Korn's inequality, we deduce the following higher regularity properties:
$$e_\e, \sigma_\e \in H^1([0,T];H^1_{\rm loc}(\O;\Ms)),\quad \dot u_\e \in L^2(0,T;H^2_{\rm loc}(\O;\Ms)).$$

\subsection{A useful formula for the stress} 

Following the strategy of \cite{Se}, we introduce
 the function  $g : \Ms \to \R$ defined by
\begin{equation}\label{eq:g}
g(\xi):=\left\{
\begin{array}{ll}
\ds \frac{K_0}{2}({\rm tr}\xi)^2+\mu|\xi_D|^2 & \text{if } 2\mu|\xi_D| +2 \alpha K_0  {\rm tr}\xi \leq \kappa,\\
\ds \frac{\kappa\, {\rm tr}\xi}{2 \alpha} - \frac{\kappa^2}{8\alpha^2 K_0} + \Dc \left(|\xi_D|-\frac{{\rm tr} \xi }{2\alpha} + \frac{\kappa}{4\alpha^2 K_0} \right)^2_{\!\!+} & \text{if } 2\mu|\xi_D| + 2\alpha K_0  {\rm tr}\xi > \kappa,
\end{array}
\right.
\end{equation}
where
$$\Dc:=\left(\frac{1}{\mu}+\frac{1}{2\alpha^2 K_0} \right)^{-1}.$$
According to the results of \cite{ReSe,Se}, $g$ is the convex conjugate of the function $\tau \mapsto \frac12 \CC^{-1}\tau:\tau + I_K(\tau)$. Consequently, we have
$$g^*(\tau)= \frac12 \CC^{-1}\tau:\tau + I_K(\tau)\quad \text{ for all }\tau \in \Ms,$$
and, in particular, $g$ is convex, lower semicontinuous, and locally Lipschitz continuous. One can  check that $g$ is actually of class $\C^1$, and that its differential is given by
$$Dg(\xi) \!=\!
\begin{cases}
\!\ds K_0({\rm tr}\xi) {\rm Id} +2\mu \xi_D &\text{if } 2\mu|\xi_D| + 2\alpha K_0  {\rm tr}\xi \leq \kappa,\smallskip\\
\!\ds  \frac{\kappa}{2\alpha} {\rm Id}\!+\! 2\Dc\!\! \left(|\xi_D|-\frac{{\rm tr}\xi}{2\alpha} + \frac{\kappa}{4\alpha^2 K_0} \right)_{\!\!+}\left(\frac{\xi_D}{|\xi_D|}-\frac{1}{2\alpha}{\rm Id}  \right) &\text{if } 2\mu|\xi_D| + 2\alpha K_0  {\rm tr}\xi > \kappa
\end{cases}$$
for every $\xi \in \Ms$.
In addition, there exists a constant $a_0>0$ (only depending on $\alpha$, $\kappa$, $K_0$) such that
\begin{equation}\label{eq:coerc-g}
g(\xi)  \geq  a_0 |\xi|^2 ,\quad  \text{ if } 2\mu|\xi_D| +2\alpha K_0  {\rm tr}\xi \leq \kappa.
\end{equation}

According to convex analysis,  the regularized flow rule \eqref{eq:reg-flow-rule} can be equivalently written, for a.e.\ $t \in [0,T]$, as
$$\dot p_\e(t) \in \partial I_K\big(\sigma_\e(t) - \e \dot p_\e(t)\big)\quad \text{ in }\O,$$
or still
$$\dot p_\e(t)  + e_\e(t) - \e \CC^{-1}\dot p_\e(t) \in  \partial g^*(\sigma_\e(t) - \e \dot p_\e(t))\quad \text{ in }\O.$$
Again by convex duality we can rewrite the regularized flow rule in terms of the following formula, 
which will be instrumental in the subsequent analysis: for a.e.\ $t \in [0,T]$,
\begin{equation}\label{form-sigma}
\sigma_\e(t) - \e \dot p_\e(t)=Dg\big(\dot p_\e(t)  + e_\e(t) - \e \CC^{-1}\dot p_\e(t)\big)\quad \text{ in }\O.
\end{equation}

\section{A chain rule formula for the weak derivatives of the stress}\label{sec:4}

In order to prove Theorem~\ref{thm:main}, we will show that $\sigma^\e \in L^\infty(0,T;H^1_{\rm loc}(\O;\Ms))$ with a bound on its norm that is uniform with respect to $\e$. To do that, it will be useful to differentiate formula \eqref{form-sigma}
with respect to the space variables. Unfortunately, the function $g$ is only of class $\mathcal C^1$, with Lipschitz continuous partial derivatives, so that the classical chain rule formula for vector-valued Sobolev functions does not apply. To overcome this issue, we will use the special form of $g$ given in \eqref{eq:g}, together with the generalized chain rule formula established in \cite[Corollary~3.2]{ADM}. 

\subsection{Some geometrical preliminaries}

We first establish several technical results which will be of use in the proof of that result. Let us introduce some notation: let $G :\Ms \to \Ms$ be defined, for all $\xi \in \Ms$, by
$$G(\xi):=Dg\left(\xi+\frac{\kappa}{4\alpha K_0}{\rm Id}\right).$$
We define the open sets
\begin{eqnarray*}
\A_1 & := & \{\xi \in \Ms : \mu |\xi_D| + \alpha K_0 {\rm tr}\xi<0\},\\
\A_2 & := & \left\{\xi \in \Ms : |\xi_D|< \frac{{\rm tr}\xi}{2\alpha} \right\},\\
\A_3 & := & \Ms \setminus (\overline \A_1 \cup \overline \A_2).
\end{eqnarray*}
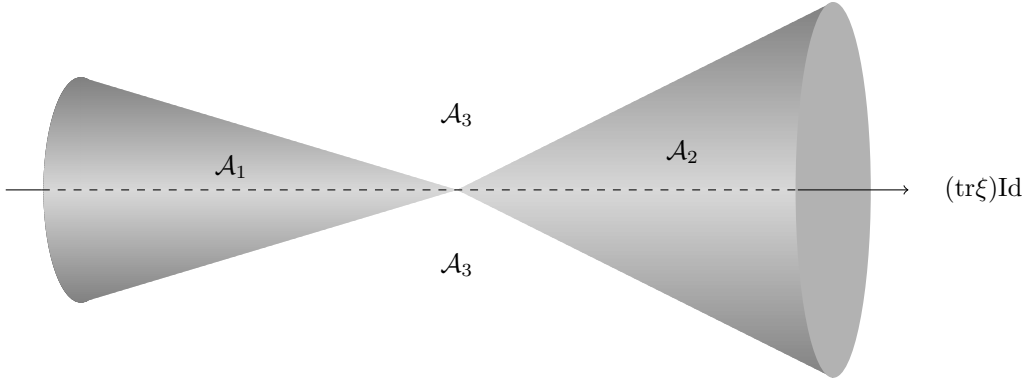
\begin{figure}[!h]\begin{tikzpicture}
\centering
	\fill[top color=black!50,bottom color=black!50,middle color=gray!30] (0,1.5) ellipse (.5 and 1.5);
	\shade[top color=black!50,bottom color=black!50,middle color=gray!30]  (0,0) -- (0,3) -- (5,1.5) -- cycle;

	\shade [top color=black!50,bottom color=black!50,middle color=gray!30]  (10,-1) -- (10,4) -- (5,1.5) -- cycle;
	\fill[color=black!30] (10,1.5) ellipse (.5 and 2.5);
	
	\node (8,2) at (8,2) {$\mathcal A_2$};
	\node (2,1.8) at (2,1.8) {$\mathcal A_1$};
	\node (5,0.5) at (5,0.5) {$\mathcal A_3$};
	\node (5,2.5) at (5,2.5) {$\mathcal A_3$};
	\node (12,1.5) at (12,1.5) {$({\rm tr}\xi) {\rm Id}$};
	
	\draw [-] (-1,1.5) -- (-0.52,1.5);
	\draw [dashed] (-0.52,1.5) -- (9.53,1.5) ;
	\draw [->] (9.53,1.5) -- (11,1.5);
\end{tikzpicture}
\caption{Representation of the sets $\mathcal A_1$, $\mathcal A_2$ and $\mathcal A_3$.}
\label{fig:1}
\end{figure}

\noindent
The sets $\A_1$ and $\A_2$ are open cones in $\Ms$ (which is a three-dimensional space) with same vertex at the origin, $\partial \A_1 \cap \partial \A_2=\{0\}$, and axis given by the hydrostatic matrices (see Figure \ref{fig:1}). Note, in particular, that if $\xi \in \A_3$, then $-\frac{\mu}{\alpha K_0} |\xi_D|<{\rm tr}\xi<2 \alpha |\xi_D|$, so that
\begin{equation}\label{eq:xiA3}
|\xi|\leq c|\xi_D| \quad \text{ for all }\xi \in \A_3,
\end{equation}
for some constant $c>0$ only depending on $\mu$, $\alpha$, and $K_0$. It follows that
\begin{equation}\label{eq:G}
G(\xi) = \frac{\kappa}{2\alpha} {\rm Id}+
\begin{cases}
\ds K_0({\rm tr}\xi) {\rm Id} +2\mu \xi_D & \text{ if } \xi \in  \A_1,\\
\ds 0 &  \text{ if } \xi \in  \A_2 \cup \{0\},\\
\ds  2\Dc \left(|\xi_D|-\frac{{\rm tr}\xi}{2\alpha}  \right)\left(\frac{\xi_D}{|\xi_D|}-\frac{1}{2\alpha}{\rm Id}  \right) & \text{ if } \xi \in \overline \A_3 \setminus \{0\}.
\end{cases}
\end{equation}
Note that $G$ is Lipschitz continuous on $\Ms$, and its restriction to the open sets $\A_1$, $\A_2$, and $\A_3$ is smooth. The only non differentiability points lie at the interfaces $\partial \A_1 \cup \partial \A_2$. Let us also define, for all $\xi$ and $\eta \in \Ms$,
$$\mathcal G(\xi,\eta) =
\begin{cases}
\ds K_0({\rm tr }\, \eta) {\rm Id} +2\mu \eta_D  & \text{ if } \xi \in  \A_1,\\
\ds 0 &  \text{ if }  \xi \in  \A_2\cup \{0\},\\
\ds  2\Dc \left(\frac{\xi_D:\eta_D}{|\xi_D|}-\frac{{\rm tr}\eta}{2\alpha}  \right)\left(\frac{\xi_D}{|\xi_D|}-\frac{1}{2\alpha}{\rm Id}  \right)&
\smallskip\\
\qquad\qquad \ds +2\Dc \left(|\xi_D| - \frac{{\rm tr}\xi}{2\alpha} \right) \frac{1}{| \xi_D|}\left(\eta_D- \frac{\xi_D : \eta_D}{|\xi_D|^2}\xi_D \right) &  \text{ if }   \xi \in \overline \A_3\setminus \{0\}.
\end{cases}$$
Note that if $\xi \in \A_1 \cup \A_2 \cup \A_3$, then 
$$\mathcal G(\xi,\eta)=DG(\xi)\eta=D^2g\left(\xi+\frac{\kappa}{4\alpha  K_0}{\rm Id}\right)\eta.$$

\begin{rem}\label{rem:CS}
Since, for each $\xi \in \Ms$, the bilinear form $(\eta_1,\eta_2) \mapsto \mathcal G(\xi,\eta_1):\eta_2$ is symmetric and non-negative, if follows from the Cauchy-Schwarz inequality (applied to this bilinear form) that
$$\mathcal G(\xi,\eta_1):\eta_2 \leq \sqrt{\mathcal G(\xi,\eta_1):\eta_1}\sqrt{\mathcal G(\xi,\eta_2):\eta_2}$$
for every $\xi,\eta_1,\eta_2 \in \Ms$.
In addition, there exists a constant $C_*>0$, depending only on $K_0$, $\mu$, $\alpha$, and $\kappa$, such that 
$$\mathcal G(\xi,\eta):\eta \leq C_* |\eta|^2$$
for all $\xi,\eta \in \Ms$.
\end{rem}

\begin{lem}\label{lem:normal}
Let $\sigma \in (\partial \A_1 \cup \partial \A_2) \setminus \{0\}$, and $\tau_1, \tau_2 \in \Ms$. We define the set 
$$\mathcal T:=\left\{\sigma +z_1 \tau_1 +  z_2\tau_2 :\ z \in \R^2 \right\}.$$
Assume that $G|_{\mathcal T}$ is differentiable at $\sigma$. Then, 
$$\tau_1 : \nu(\sigma)=\tau_2 : \nu(\sigma)=0,$$
where 
$$\nu(\sigma):=
\begin{cases}
\mu\dfrac{\sigma_D}{|\sigma_D|}+\alpha  K_0 {\rm Id} & \text{ if } \sigma \in \partial \A_1,\smallskip\\
\dfrac{\sigma_D}{|\sigma_D|}-\dfrac{1}{2\alpha}{\rm Id}& \text{ if } \sigma \in \partial \A_2
\end{cases}$$
is normal to $\partial \A_1 \cup \partial \A_2$ at $\sigma$ and oriented from $\A_1 \cup \A_2$ to $\A_3$.
\end{lem}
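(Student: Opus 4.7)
The plan is to exploit the piecewise-smooth structure of $G$: on each of the open cones $\A_1$, $\A_2$, $\A_3$ the function $G$ coincides with a smooth function that extends naturally to the closure, and the hypothesis that $G|_{\mathcal T}$ is differentiable at $\sigma$ will force the one-sided derivatives at $\sigma$, computed from the two regions whose closures contain $\sigma$, to coincide on the plane $\mathrm{span}(\tau_1,\tau_2)$. Since the jump in those derivatives will turn out to be supported along $\nu(\sigma)$, this yields the orthogonality statement.

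First I would localise $\sigma$. If $\sigma\in\partial\A_1\setminus\{0\}$, the defining relation $\mu|\sigma_D|+\alpha K_0\,\mathrm{tr}\,\sigma=0$ forces $\sigma_D\neq 0$ and $\mathrm{tr}\,\sigma<0$. Since $\overline{\A_2}\subset\{\xi\in\Ms : \mathrm{tr}\,\xi\geq 0\}$, it follows that $\sigma\notin\overline{\A_2}$, and hence $\sigma\in\overline{\A_1}\cap\overline{\A_3}$. Analogously, $\sigma\in\partial\A_2\setminus\{0\}$ satisfies $\mathrm{tr}\,\sigma>0$ and $\sigma_D\neq 0$ and lies only in $\overline{\A_2}\cap\overline{\A_3}$. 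In both cases $\nu(\sigma)$ is, up to a positive scalar, the gradient at $\sigma$ of the function defining the relevant boundary, hence non-zero, transverse to the boundary, and pointing from $\A_1$ (resp.\ $\A_2$) into $\A_3$.

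Set $F(z):=G(\sigma+z_1\tau_1+z_2\tau_2)$. For any $\tau=z_1\tau_1+z_2\tau_2$ with $\tau{\,:\,}\nu(\sigma)\neq 0$, the curve $t\mapsto\sigma+t\tau$ crosses $\partial\A_1$ (resp.\ $\partial\A_2$) transversally at $t=0$, so the smooth extensions of $G$ from the two adjacent regions give
$$\lim_{t\to 0^+}\frac{F(tz)-F(0)}{t}=DG|_{\A_3}(\sigma)\,\tau,\qquad \lim_{t\to 0^-}\frac{F(tz)-F(0)}{t}=DG|_{\A_i}(\sigma)\,\tau,$$
with $i=1$ or $i=2$ according to the case. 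The assumed differentiability of $F$ at the origin forces these two limits to agree, yielding $DG|_{\A_i}(\sigma)\tau=DG|_{\A_3}(\sigma)\tau$ for every $\tau$ with $\tau{\,:\,}\nu(\sigma)\neq 0$; since both sides depend linearly on $\tau$, the identity then extends by continuity to the whole plane $\mathrm{span}(\tau_1,\tau_2)$.

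The heart of the proof is to check that the jump $DG|_{\A_i}(\sigma)\tau-DG|_{\A_3}(\sigma)\tau$ equals a non-zero scalar multiple of $(\tau{\,:\,}\nu(\sigma))\,\nu(\sigma)$. For $\sigma\in\partial\A_2$ this is short: since $G\equiv(\kappa/(2\alpha))\mathrm{Id}$ on $\A_2$ one has $DG|_{\A_2}(\sigma)=0$, while the coefficient $|\sigma_D|-\mathrm{tr}\,\sigma/(2\alpha)$ in the second summand of $\mathcal G(\sigma,\cdot)$ vanishes on $\partial\A_2$, leaving $DG|_{\A_3}(\sigma)\tau=2\Dc\,(\tau{\,:\,}\nu(\sigma))\,\nu(\sigma)$. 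The delicate case is $\sigma\in\partial\A_1$: using $\mathrm{tr}\,\sigma=-\mu|\sigma_D|/(\alpha K_0)$ one obtains $|\sigma_D|-\mathrm{tr}\,\sigma/(2\alpha)=(\mu/\Dc)|\sigma_D|$, and then, invoking the crucial identity $\Dc^{-1}=\mu^{-1}+(2\alpha^2 K_0)^{-1}$, a direct algebraic manipulation collapses the difference to $\bigl(\Dc/(\alpha^2\mu K_0)\bigr)(\tau{\,:\,}\nu(\sigma))\,\nu(\sigma)$. Since $\nu(\sigma)\neq 0$, vanishing of the jump on $\mathrm{span}(\tau_1,\tau_2)$ forces $\tau_j{\,:\,}\nu(\sigma)=0$ for $j=1,2$. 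I expect the main obstacle to be precisely this last algebraic step on $\partial\A_1$, where the expressions for $DG|_{\A_1}$ and $DG|_{\A_3}$ look structurally quite different and one has to use the Lam\'e/cohesion relation in an essential way to recognise the difference as a rank-one tensor aligned with $\nu(\sigma)$.
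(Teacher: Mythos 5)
Your proposal is correct and follows essentially the same route as the paper: both compare the one-sided directional derivatives of $G|_{\mathcal T}$ at $\sigma$ computed from the two adjacent cones ($\A_2$ or $\A_1$ on one side, $\A_3$ on the other) and use the differentiability hypothesis to force them to agree, the mismatch being governed by $\tau{\,:\,}\nu(\sigma)$. The only (cosmetic) difference is that you identify the jump explicitly as $c\,(\tau{\,:\,}\nu(\sigma))\,\nu(\sigma)$ with $c>0$ — and your constants, e.g.\ $\Dc/(\alpha^2\mu K_0)=2/(2\alpha^2K_0+\mu)$ on $\partial\A_1$, check out — whereas the paper argues by contradiction, extracting the same information by taking the trace of the equated expressions.
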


\begin{proof}
Suppose first that $\sigma \in \partial \A_2$, so that 
\begin{equation}\label{bdry}
|\sigma_D|= \frac{{\rm tr}\sigma}{2\alpha}.
\end{equation}
Assume by contradiction that $\tau_{k_0} : \nu(\sigma) \neq 0$ for some $k_0 \in \{1,2\}$, {\it i.e.},
\begin{equation}\label{normal}
\frac{\sigma_D:(\tau_{k_0})_D}{|\sigma_D|} - \frac{{\rm tr}\tau_{k_0}}{2\alpha}\neq 0.
\end{equation}
Then, for $t \in \R$ small enough so that $\sigma + t\tau_{k_0}\neq 0$, we can assume that $\sigma +  t\tau_{k_0} \in \A_2$  if $t<0$, while $\sigma +  t\tau_{k_0} \in \A_3$  if $t>0$. Since the restriction of $G$ to $\A_2$ is constant, and $\{ \sigma+t\tau_{k_0}: t \in \R\}$ is a direction in $\mathcal T$ along which (by assumption) $G$ is differentiable at $\sigma$, we deduce that
\begin{equation}\label{deriveg}
\frac{\partial}{\partial \tau_{k_0}}[G|_{\mathcal T}](\sigma)=\lim_{t \to 0^-}\frac{G(\sigma +t\tau_{k_0})-G(\sigma)}{t}=0.
\end{equation}
Similarly, we use the differentiability of $G$ in $\A_3$ to get that
\begin{multline}\label{derived}
\frac{\partial}{\partial \tau_{k_0}}[G|_{\mathcal T}](\sigma)=\lim_{t \to 0^+}\frac{G(\sigma +t\tau_{k_0})-G(\sigma)}{t}\\
=2\Dc\left( \frac{\sigma_D:(\tau_{k_0})_D}{|\sigma_D|} - \frac{{\rm tr}\tau_{k_0}}{2\alpha}\right)  \left( \frac{\sigma_D}{|\sigma_D|}- \frac{1}{2\alpha} {\rm Id} \right) \\
+ 2\Dc \left( |\sigma_D|-\frac{{\rm tr}\sigma}{2\alpha}  \right)\frac{1}{|\sigma_D|} \left((\tau_{k_0})_D - \frac{\sigma_D:(\tau_{k_0})_D}{|\sigma_D|^2}\sigma_D \right).
\end{multline}
According to \eqref{bdry}, the second term in the right hand side of the previous identity vanishes. On the other hand, the combination of \eqref{deriveg}, \eqref{derived}, and the assumption \eqref{normal} yield $ \frac{\sigma_D}{|\sigma_D|}= \frac{1}{2\alpha} {\rm Id}$, which is impossible.

Assume now that $\sigma \in \partial \A_1$, and, again by contradiction, that $\tau_{k_0} : \nu(\sigma) \neq 0$ for some $k_0 \in \{1,2\}$, {\it i.e.},
\begin{equation}\label{normal2}
\mu \frac{\sigma_D:(\tau_{k_0})_D}{|\sigma_D|}+\alpha K_0 ({\rm tr}\tau_{k_0}) \neq 0.
\end{equation}
Then, for $t \in \R$ small enough so that $\sigma + t\tau_{k_0}\neq 0$, we can assume that $\sigma +  t\tau_{k_0} \in \A_1$  if $t<0$, while $\sigma +  t\tau_{k_0} \in \A_3$  if $t>0$. An analogous argument as before shows that
\begin{equation}\label{deriveg2}
\frac{\partial}{\partial \tau_{k_0}}[G|_{\mathcal T}](\sigma)=\lim_{t \to 0^-}\frac{G(\sigma +t\tau_{k_0})-G(\sigma)}{t}=K_0 ({\rm tr} \tau_{k_0}) {\rm Id} +2\mu (\tau_{k_0})_D,
\end{equation}
and
\begin{multline}\label{derived2}
\frac{\partial}{\partial \tau_{k_0}}[G|_{\mathcal T}](\sigma)=\lim_{t \to 0^+}\frac{G(\sigma +t\tau_{k_0})-G(\sigma)}{t}\\
=2\Dc\left( \frac{\sigma_D:(\tau_{k_0})_D}{|\sigma_D|} - \frac{{\rm tr}\tau_{k_0}}{2\alpha}\right)  \left( \frac{\sigma_D}{|\sigma_D|}- \frac{1}{2\alpha} {\rm Id} \right) \\
+ 2\Dc \left( |\sigma_D|-\frac{{\rm tr}\sigma}{2\alpha}  \right)\frac{1}{|\sigma_D|} \left((\tau_{k_0})_D - \frac{\sigma_D:(\tau_{k_0})_D}{|\sigma_D|^2}\sigma_D \right).
\end{multline}
Equating \eqref{deriveg2} and \eqref{derived2}, and taking the trace yield
$$ \left(\frac{1}{\mu}+\frac{1}{2\alpha^2 K_0} \right)  K_0 {\rm tr} \tau_{k_0}=-\frac{1}{\alpha}\left( \frac{\sigma_D:(\tau_{k_0})_D}{|\sigma_D|} - \frac{{\rm tr}\tau_{k_0}}{2\alpha}\right),$$
which is against \eqref{normal2}.
\end{proof}

Let us denote by $T_\sigma(\partial \A_1)$ (resp. $T_\sigma(\partial \A_2)$)  the tangent space to $\partial \A_1$ (resp. $\partial \A_2$) at $\sigma$.

\begin{lem}\label{lem:tangent}
For every $\sigma \in\partial \A_i \setminus \{0\}$ ($i=1,2$),  there exists some $\rho>0$ such that
$$T_\sigma(\partial \A_i) \cap B(\sigma,\rho) \subset \overline \A_3.$$
\end{lem}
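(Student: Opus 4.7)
The plan is to exploit the convexity of the cones $\A_1$ and $\A_2$: the tangent hyperplane at a smooth boundary point of an open convex set is a supporting hyperplane and cannot re-enter the set. Combined with the observation that $\sigma$ is separated from $\overline{\A_j}$ in positive distance (for $j\neq i$), this forces a neighbourhood of $\sigma$ inside $T_\sigma(\partial\A_i)$ to lie outside $\A_i\cup\overline{\A_j}$, which then coincides locally with $\overline{\A_3}$.

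First, I would verify that $\A_1$ and $\A_2$ are open convex cones. This reduces to noting that they are the strict sublevel sets of the convex functions $\xi\mapsto \mu|\xi_D|+\alpha K_0\,{\rm tr}\,\xi$ and $\xi\mapsto |\xi_D|-{\rm tr}\,\xi/(2\alpha)$, respectively. A direct computation gives $\sigma:\nu(\sigma)=0$ for every $\sigma\in\partial\A_i\setminus\{0\}$ (for $i=1$: $\sigma:\sigma_D/|\sigma_D|=|\sigma_D|$, so $\sigma:\nu(\sigma)=\mu|\sigma_D|+\alpha K_0\,{\rm tr}\,\sigma=0$ by definition of $\partial\A_1$; for $i=2$: $\sigma:\nu(\sigma)=|\sigma_D|-{\rm tr}\,\sigma/(2\alpha)=0$ by definition of $\partial\A_2$), so that $T_\sigma(\partial\A_i)$ may be taken to be the linear hyperplane $\nu(\sigma)^\perp$, which indeed contains $\sigma$. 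Since $\A_i$ is open and convex, this hyperplane is a supporting hyperplane, so that $T_\sigma(\partial\A_i)\cap\A_i=\emptyset$.

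Next, I would check that $\sigma\notin\overline{\A_j}$ for $j\neq i$. If $\sigma\in\partial\A_2\setminus\{0\}$, then $|\sigma_D|={\rm tr}\,\sigma/(2\alpha)>0$, and hence
$$\mu|\sigma_D|+\alpha K_0\,{\rm tr}\,\sigma=\Big(\frac{\mu}{2\alpha}+\alpha K_0\Big)\,{\rm tr}\,\sigma>0,$$
so $\sigma\notin\overline{\A_1}$. If instead $\sigma\in\partial\A_1\setminus\{0\}$, then ${\rm tr}\,\sigma=-\mu|\sigma_D|/(\alpha K_0)<0$, while $\overline{\A_2}\subset\{{\rm tr}\,\xi\ge 0\}$, so $\sigma\notin\overline{\A_2}$. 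Since $\overline{\A_j}$ is closed, there exists $\rho>0$ with $B(\sigma,\rho)\cap\overline{\A_j}=\emptyset$. Combining with the previous step, $T_\sigma(\partial\A_i)\cap B(\sigma,\rho)\subset\Ms\setminus(\A_i\cup\overline{\A_j})$.

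Finally, it remains to see that, locally around $\sigma$, the set $\Ms\setminus(\A_i\cup\overline{\A_j})$ is contained in $\overline{\A_3}$: a point $\tau$ in this set either belongs to the open stratum $\A_3$ (and we are done), or lies on $\partial\A_i$ at positive distance from $\overline{\A_j}$, in which case arbitrarily small perturbations of $\tau$ into $\Ms\setminus\overline{\A_i}$ stay outside $\overline{\A_j}$ and hence fall into $\A_3$, giving $\tau\in\overline{\A_3}$. The mildly delicate point is this last topological identification and the associated bookkeeping on closures; the rest is the supporting hyperplane property and two direct computations on the defining equations of $\A_1$ and $\A_2$.
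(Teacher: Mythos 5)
Your proposal is correct and follows essentially the same route as the paper: the paper also uses the convexity of the defining function $a(\tau)=|\tau_D|-\tfrac{1}{2\alpha}{\rm tr}\,\tau$ (equivalently, the supporting hyperplane property at the smooth boundary point $\sigma$) to conclude $a\geq 0$ on the tangent plane, together with the fact that $\sigma\neq 0$ keeps a ball $B(\sigma,\rho)$ away from $\overline{\A_j}$, $j\neq i$. Your explicit verifications that $\sigma\notin\overline{\A_j}$ and that $\Ms\setminus(\A_i\cup\overline{\A_j})$ locally lies in $\overline{\A_3}$ are correct refinements of points the paper treats more briefly (via $\overline{\A_1}\cap\overline{\A_2}=\{0\}$).
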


\begin{proof}
Let us check this property for $i=2$. Let $\sigma\in \partial \A_2 \setminus \{0\}$. Since $\sigma \neq 0$, there exists some $\rho>0$ such that $B(\sigma,\rho) \cap \overline{\A_1}=\emptyset$. We define the convex function
$$a(\tau):=  |\tau_D| - \frac{1}{2\alpha}{\rm tr}\tau$$
for every $\tau \in \Ms$.
Since $\sigma \neq 0$, the function $a$ is differentiable at $\sigma$ with gradient equal to $\nu(\sigma)$, and for every $\tau \in T_\sigma(\partial \A_2)\cap B(\sigma,\rho)$, we have
$$a(\tau) \geq a(\sigma) + \nu(\sigma):(\tau - \sigma).$$
Since $\sigma  \in \partial \A_2$, we have that $a(\sigma)=0$, while the fact that $\tau - \sigma \in T_\sigma(\partial \A_2)$ yields $\nu(\sigma):(\tau - \sigma)=0$. Therefore
$a(\tau) \geq 0$, that is, $\tau \not\in \A_2$, and since $\tau \not\in \overline \A_1$ neither, then $\tau \in \overline \A_3$ (see Figure \ref{fig:2}).
\end{proof}

\begin{figure}[h!]
  \begin{tikzpicture}[fill opacity=.6]
	\centering
	\fill[top color=black!50,bottom color=black!50,middle color=gray!30,opacity=1] (0,1.5) ellipse (.5 and 1.5);
	\shade[top color=black!50,bottom color=black!50,middle color=gray!30,opacity=1]  (0,0) -- (0,3) -- (5,1.5) -- cycle;

	\shade [top color=black!50,bottom color=black!50,middle color=gray!30,opacity=1]  (10,-1) -- (10,4) -- (5,1.5) -- cycle;
	\fill[color=black!30,opacity=1] (10,1.5) ellipse (.5 and 2.5);

  	\fill [color=gray!30] (-1,4) -- (7,1.6) -- (11,-1.6) --  (3,0.6) -- cycle;
	\node (-1,4.2) at (-1,4.2) {$T_\sigma(\partial \mathcal A_1)$};
	\shade[ball color = white] (2.5,2.22) circle (0.5);
	\draw[dashed,rotate around={-15:(2.5,2.22)}] (2.5,2.22) ellipse (0.5 and 0.1);
	
	\node (8.5,1.5) at (8.5,1.5) {$\mathcal A_2$};
	\node (1,1.5) at (1,1.5) {$\mathcal A_1$};
	\node (5,3) at (5,3) {$\mathcal A_3$};
	\node (2.5,2.53) at (2.5,2.53) {{\small $\sigma$}};
	\node (2.5,2.22) at (2.5,2.22) {{\scriptsize $\bullet$}};
	\node (3.6,2.3) at (3.6,2.3) {{\small $B(\sigma,\rho)$}};

\end{tikzpicture}
\caption{The tangent space $T_\sigma(\partial \mathcal A_1)$ of $\partial \mathcal A_1$ at $\sigma \in \partial \mathcal A_1 \setminus \{0\}$ might intersect $\mathcal A_2$, but not in a small neighborhood $B(\sigma,\rho)$ of $\sigma$.}
\label{fig:2}
\end{figure}
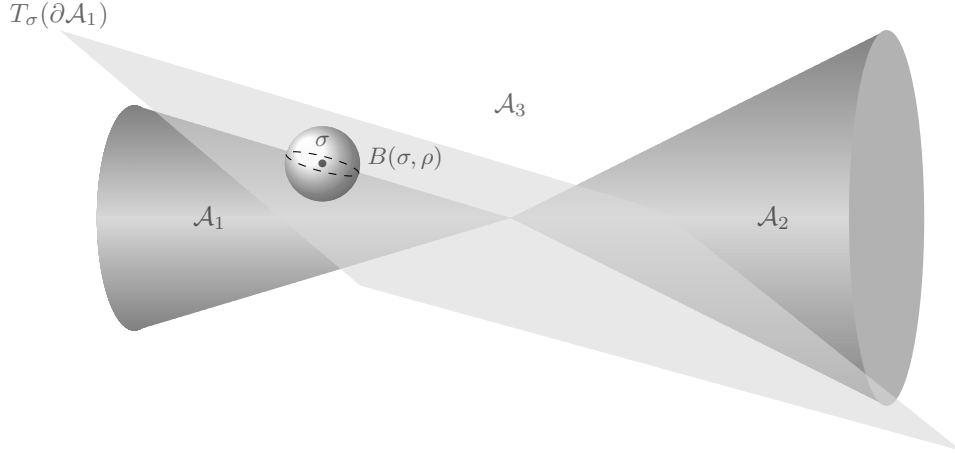

\subsection{Proof of the chain rule formula}

As a consequence of the previous results, we get that a chain rule type formula holds.

\begin{cor}\label{cor:form-sigma}
Let us set $\xi^\e:=\dot p_\e  + e_\e - \e \CC^{-1}\dot p_\e-\frac{\kappa}{4\alpha K_0}{\rm Id}$. Then for all $k \in \{1,2\}$,
$$\partial_k \sigma_\e -\e \partial_k \dot p_\e= \mathcal G(\xi^\e,\partial_k\xi^\e)=
\begin{cases}
\ds K_0{\rm tr}(\partial_k \xi^\e) {\rm Id} +2\mu \partial_k \xi^\e_D  & \text{ a.e.\ in } A^{\xi^\e}_1,\\
\ds 0 & \text{ a.e.\ in }  A^{\xi^\e}_2,\\
\ds  2\Dc \left(\frac{\xi^\e_D:\partial_k\xi^\e_D}{|\xi^\e_D|}-\frac{{\rm tr}(\partial_k \xi^\e)}{2\alpha}  \right)\left(\frac{\xi^\e_D}{|\xi^\e_D|}-\frac{1}{2\alpha}{\rm Id}  \right)\smallskip\\
\quad \ds +2\Dc \left(|\xi^\e_D| - \frac{{\rm tr}\xi^\e}{2\alpha} \right) \frac{1}{| \xi^\e_D|}\left(\partial_k \xi^\e_D- \frac{\xi^\e_D : \partial_k\xi^\e_D}{|\xi^\e_D|^2}\xi^\e_D \right)& \text{ a.e.\ in }  A^{\xi^\e}_3,
\end{cases}$$
where
\begin{eqnarray*}
A_1^{\xi^\e}&:=&\{(x,t) \in \O \times (0,T) : \xi^\e(x,t) \in \A_1\},\\
A_2^{\xi^\e}&:=&\{(x,t) \in \O \times (0,T) : \xi^\e(x,t) \in \A_2 \cup \{0\} \},\\
A_3^{\xi^\e}&:=&\{(x,t) \in \O \times (0,T): \xi^\e(x,t) \in \overline \A_3\setminus \{0\}\}.
\end{eqnarray*}
\end{cor}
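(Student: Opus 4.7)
The starting point is formula \eqref{form-sigma}, which, after subtracting the constant $\frac{\kappa}{4\alpha K_0}{\rm Id}$ from the argument of $Dg$, reads $\sigma_\e - \e \dot p_\e = G(\xi^\e)$ a.e.\ in $\O \times (0,T)$. By Proposition~\ref{prop:higher-reg}, $\xi^\e$ lies in $L^2(0,T; H^1_{\rm loc}(\O; \Ms))$, and since $G$ is globally Lipschitz on $\Ms$, so does $G(\xi^\e)$; the task is to compute its spatial partial derivatives. The difficulty is that $G$ is only piecewise smooth: of class $\C^1$ on each of the open cones $\A_1$, $\A_2$, $\A_3$, but merely continuous across the interface $\partial \A_1 \cup \partial \A_2$.

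The plan is to split $\O \times (0,T)$ into the three sets $A_i^{\xi^\e}$ and argue separately. On the open subsets $\{\xi^\e \in \A_i\}$ for $i=1,2,3$, the classical chain rule for the composition of a Sobolev function with a $\C^1$ map yields
$$\partial_k [G(\xi^\e)] = DG(\xi^\e)\,\partial_k \xi^\e = \mathcal G(\xi^\e, \partial_k \xi^\e),$$
which matches the stated formulas in all three regimes (in particular $\mathcal G \equiv 0$ on $\A_2$). On the remaining subset $\{\xi^\e = 0\} \subset A_2^{\xi^\e}$, the standard property that a Sobolev function has vanishing gradient a.e.\ on its level sets gives $\partial_k \xi^\e = 0$ a.e., so both sides of the desired identity vanish.

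The core of the argument concerns the singular set $B := \{\xi^\e \in (\partial \A_1 \cup \partial \A_2) \setminus \{0\}\} \subset A_3^{\xi^\e}$, where $G$ is not differentiable. Here I would invoke the generalized chain rule \cite[Corollary~3.2]{ADM}: for a Lipschitz $G$ and a Sobolev $\xi^\e$, it identifies $\partial_k[G(\xi^\e)]$ a.e.\ with the directional derivative of $G$ at $\xi^\e$ along $\partial_k \xi^\e$, and forces this directional derivative to exist a.e. Since across $\partial \A_i$ the one-sided derivatives of $G$ from the two sides agree only when approached tangentially, Lemma~\ref{lem:normal} enforces
$$\partial_k \xi^\e : \nu(\xi^\e) = 0 \quad \text{a.e.\ on } \{\xi^\e \in \partial \A_i\},\quad i=1,2.$$
By Lemma~\ref{lem:tangent}, the segment $\xi^\e + t\,\partial_k \xi^\e$ then stays in $\overline{\A_3}$ for $|t|$ small, so the directional derivative is computed from the smooth $\A_3$-branch of $G$, producing precisely the $\A_3$-expression for $\mathcal G(\xi^\e, \partial_k \xi^\e)$.

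The main obstacle is the careful application of the ADM chain rule to the vector-valued Lipschitz map $G$: one must check that its piecewise $\C^1$ structure, with the smooth interfaces $\partial \A_1$ and $\partial \A_2$, fits the hypotheses of \cite[Corollary~3.2]{ADM}, and then translate the abstract conclusion into the concrete one-sided-derivative statement used above. A final consistency check--based on the tangency condition together with the explicit equations of $\partial \A_1$ and $\partial \A_2$--shows that the $\A_3$-formula for $\mathcal G$ reduces to the $\A_1$- or $\A_2$-formula as $\xi^\e$ touches the corresponding interface, confirming that $\mathcal G(\xi^\e, \cdot)$ joins continuously across the interfaces and that no extra contribution appears at $B$.
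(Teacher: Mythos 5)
Your proposal is correct and follows essentially the same route as the paper: the decomposition into $\{\xi^\e=0\}$, the open cones $\A_i$, and the interface $(\partial\A_1\cup\partial\A_2)\setminus\{0\}$, with the generalized chain rule of \cite[Corollary~3.2]{ADM} combined with Lemma~\ref{lem:normal} (tangency forced by differentiability of the restriction of $G$) and Lemma~\ref{lem:tangent} (the tangent plane stays locally in $\overline{\A_3}$). The only cosmetic imprecision is calling the sets $\{\xi^\e\in\A_i\}$ ``open'' and invoking the ``classical'' chain rule there -- these sets are merely measurable, and one really uses the ADM formula there too, which reduces to the classical one at differentiability points of $G$.
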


\begin{proof}
 In order not to overburden notation, we omit the subscript $\e$. Using the regularity properties \eqref{eq:higher-reg1}, we deduce that $\xi \in L^2(0,T;H^1_{\rm loc}(\O;\Ms))$, and by \eqref{form-sigma}, we get $\sigma-\e \dot p=G \circ \xi$.
According to the generalized chain rule formula established in \cite[Corollary~3.2]{ADM}, if we denote
the affine space passing through $\xi(x,t)$ by
$$\mathcal T_{(x,t)}^{\xi}:=\left\{ \xi(x,t) + \partial_1\xi(x,t) z_1+\partial_2\xi(x,t) z_2:\ z \in \R^2\right\},$$
then we have that $G|_{\mathcal T_{(x,t)}^{\xi}}$ is differentiable at $\xi(x,t)$ for a.e.\ $(x,t) \in \O\times (0,T)$, and for all $1 \leq i,j \leq 2$,
\begin{equation}\label{eq:chain-rule}
\nabla \sigma_{ij}-\e\nabla \dot p_{ij}=\nabla\big(G_{ij}|_{\mathcal T_{(x,t)}^{\xi}}\big)(\xi)\nabla \xi.
\end{equation}

First, since $\nabla \xi=0$ a.e.\ in $\{\xi=0\}$, it results from formula \eqref{eq:chain-rule} that $\nabla \sigma-\e\nabla \dot p=0$ a.e.\ in $\{\xi=0\}$, and as $\{\xi=0\}\subset A_2^{\xi}$, we deduce that
\begin{equation}\label{eq:xi=0}
\partial_k \sigma-\e \partial_k\dot p=\mathcal G(\xi,\partial_k \xi) \quad \text{ a.e.\ in }\{\xi=0\}.
\end{equation}

Secondly, if $\xi(x,t) \in \A_i$ for some $i \in \{1,2,3\}$, then $\xi(x,t)$ is a differentiability point of $G$, and using again \eqref{eq:chain-rule}, 
\begin{equation}\label{eq:xiAi}
\partial_k \sigma-\e\partial_k \dot p= DG(\xi):\partial_k \xi=\mathcal G(\xi,\partial_k\xi) \quad \text{ a.e.\ in }\bigcup_{i=1}^3 \{\xi \in \A_i\}.
\end{equation}

It thus remains to check the formula when $\xi(x,t) \in \partial \A_3 \setminus \{0\}=(\partial \A_1 \cup \partial \A_2) \setminus \{0\}$. According to Lemma~\ref{lem:normal}, we deduce that for a.e.\ $(x,t)$ in that set,
$$\partial_k\xi(x,t) : \nu(\xi(x,t))=0 \quad \text{for all }k \in \{1,2\},$$
and thus, $\mathcal T_{(x,t)}^{\xi} \subset T_{\xi(x,t)}(\partial \A_1) \cup T_{\xi(x,t)}(\partial \A_2)$. Using next Lemma~\ref{lem:tangent} yields that 
$$\mathcal T_{(x,t)}^{\xi}\cap B(\xi(x,t),\rho(x,t)) \subset \overline \A_3$$
for some $\rho(x,t)>0$. Thus, according to the expression \eqref{eq:G} of $G$ in $\overline\A_3$ together with \eqref{eq:chain-rule},
\begin{equation}\label{eq:xidA3}
\partial_k \sigma-\e\partial_k\dot p=\mathcal G(\xi,\partial_k\xi)\quad \text{ a.e.\ in }\{\xi \in \partial \A_3 \setminus \{0\}\}.
\end{equation}
Gathering \eqref{eq:xi=0}, \eqref{eq:xiAi}, and \eqref{eq:xidA3} leads to the desired expression for the derivatives of $\sigma$.
\end{proof}

\section{A uniform bound for the Sobolev norm of the stress}\label{sec:5}

We next show that the $L^\infty(0,T;H^1_{\rm loc}(\O;\Ms))$ norm of $\sigma_\e$ can be uniformly controled with respect to $\e$.

\begin{prop}\label{prop:unif-bound}
Under the same assumptions of Theorem~\ref{thm:main}, we have that, for every open set $\O' \subset\subset\O$, 
$$\sup_{\e>0} \| \sigma_\e\|_{L^\infty(0,T;H^1(\O';\Ms))}<+\infty.$$
\end{prop}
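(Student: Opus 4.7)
My plan is to differentiate the equilibrium equation $-\div\sigma_\e=f$ in a spatial direction $k\in\{1,2\}$ — legitimate thanks to the higher regularity in Proposition~\ref{prop:higher-reg} — and to test the identity $-\div(\partial_k\sigma_\e)=\partial_k f$ against $\psi\,\partial_k\dot u_\e$, where $\psi\in\C^\infty_c(\O)$ is a cutoff with $\psi\equiv 1$ on $\O'$. Integrating by parts in space (no boundary contribution thanks to $\psi$) and in time over $(0,t)$, and using the additive decomposition $E\dot u_\e=\dot e_\e+\dot p_\e$ together with Hooke's law, the elastic contribution takes the form
$$\int_0^t\!\!\int_\O\psi\,\partial_k\sigma_\e:\partial_k\dot e_\e\,dx\,ds=\tfrac12\!\int_\O\!\psi\,\CC^{-1}\partial_k\sigma_\e(t)\!:\!\partial_k\sigma_\e(t)\,dx-\tfrac12\!\int_\O\!\psi\,\CC^{-1}\partial_k\sigma_0\!:\!\partial_k\sigma_0\,dx,$$
which by \eqref{eq:coerc-Q} is bounded below by $c\,\|\psi^{1/2}\partial_k\sigma_\e(t)\|_{L^2}^2$ modulo a constant controlled by $\|e_0\|_{H^1_{\rm loc}}$, producing the first coercive term on the left-hand side.

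For the plastic contribution $\int_0^t\!\!\int_\O\psi\,\partial_k\sigma_\e:\partial_k\dot p_\e\,dx\,ds$, I would use the chain-rule formula of Corollary~\ref{cor:form-sigma}. Substituting $\partial_k\dot p_\e=\partial_k\xi^\e-\partial_k e_\e+\e\CC^{-1}\partial_k\dot p_\e$ together with $\partial_k\sigma_\e-\e\partial_k\dot p_\e=\mathcal G(\xi^\e,\partial_k\xi^\e)$ produces the second coercive quantity
$$\int_0^t\!\!\int_\O\psi\,\mathcal G(\xi^\e,\partial_k\xi^\e):\partial_k\xi^\e\,dx\,ds,$$
which is non-negative by Remark~\ref{rem:CS}, together with the positive term $\e\int_0^t\!\!\int_\O\psi|\partial_k\dot p_\e|^2\,dx\,ds$. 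The residual sign-indefinite cross terms (in which $\CC^{-1}$ and $\partial_k e_\e$ appear) are controlled by the pointwise bound $\mathcal G(\xi,\eta):\eta\leq C_*|\eta|^2$ from Remark~\ref{rem:CS} and Young's inequality, so part is absorbed into the coercivities and part contributes a lower-order term handled through Gronwall.

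The central obstacle is then a term of the form $\int_0^t\!\!\int_\O|\partial_k\sigma_\e||\xi^\e|\,\psi\,dx\,ds$, which surfaces from the piecewise-quadratic expression of $\mathcal G$ in the region $A^{\xi^\e}_3$ (see \eqref{eq:G}) combined with the above cross terms. Since $\xi^\e$ is only bounded in $L^1$ uniformly in $\e$, a naive Cauchy--Schwarz is useless. My plan is to apply the generalized Cauchy--Schwarz inequality for the non-negative symmetric bilinear form $(\eta_1,\eta_2)\mapsto\mathcal G(\xi^\e,\eta_1):\eta_2$ (Remark~\ref{rem:CS}) together with the explicit expression of $\mathcal G$ on $A_1$, $A_2$, $A_3$ to derive an estimate of the form
$$\int_0^t\!\!\int_\O|\partial_k\sigma_\e||\xi^\e|\,\psi\,dx\,ds\ \leq\ M\Bigl(\int_0^t\!\!\int_\O\psi\,\mathcal G(\xi^\e,\partial_k\xi^\e):\partial_k\xi^\e\,dx\,ds\Bigr)^{1/2},$$
where $M$ depends only on the energy-space norms of $(u_\e,e_\e,p_\e)$, which are uniformly bounded in $\e$ thanks to \eqref{eq:first-estim}--\eqref{eq:third-estim-bis}. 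A Young's inequality then absorbs this contribution into the $\mathcal G$-coercive term.

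The final step is to treat the cutoff cross term $\int_0^t\!\!\int_\O\partial_k\sigma_\e:(\nabla\psi\odot\partial_k\dot u_\e)\,dx\,ds$ and the forcing term $\int_0^t\!\!\int_\O\psi\,\partial_k f\cdot\partial_k\dot u_\e\,dx\,ds$: I would reduce them, by an integration by parts transferring one spatial derivative off $\dot u_\e$ and using the equilibrium equation to cancel the second derivative of $\sigma_\e$ that would otherwise emerge, to a combination of already-absorbable terms and a residual involving only $\|\dot u_\e\|_{L^2(0,T;L^2(\O;\R^2))}$. The crucial point --- and the reason the argument is restricted to $n=2$ --- is that this $L^2$-norm is uniformly bounded in $\e$ by virtue of \eqref{eq:third-estim-bis} and the continuous embedding $BD(\O)\hookrightarrow L^2(\O;\R^2)$, which fails in higher dimension. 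A Gronwall inequality applied to $t\mapsto\|\psi^{1/2}\partial_k\sigma_\e(t)\|_{L^2}^2$ then closes the bound.
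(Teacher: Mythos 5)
Your overall architecture matches the paper's: differentiate the equilibrium equation, test against a cut-off times $\partial_k\dot u_\e$, extract the elastic coercivity and the $\mathcal G$-coercivity via Corollary~\ref{cor:form-sigma}, reduce the cut-off and forcing terms to $\|\dot u_\e\|_{L^2(Q_T;\R^2)}^2$ (whence the restriction to $n=2$), and close with Gronwall. The gap is in the step you yourself flag as central. You claim that the generalized Cauchy--Schwarz inequality of Remark~\ref{rem:CS} plus the explicit expression of $\mathcal G$ yields
$$\iint |\partial_k\sigma_\e|\,|\xi^\e|\,\psi \;\leq\; M\Bigl(\iint \psi\,\mathcal G(\xi^\e,\partial_k\xi^\e):\partial_k\xi^\e\Bigr)^{1/2}$$
with $M$ controlled by energy norms. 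But the only thing Remark~\ref{rem:CS} gives directly is $|\partial_k\sigma_\e-\e\partial_k\dot p_\e|\le \sqrt{C_*}\,(\mathcal G(\xi^\e,\partial_k\xi^\e):\partial_k\xi^\e)^{1/2}$, and Cauchy--Schwarz in $(x,s)$ then forces $M\sim\|\xi^\e\|_{L^2}$, which is \emph{not} uniformly bounded in $\e$ (only $\sqrt\e\,\|\dot p_\e\|_{L^2}$ and $\|\dot p_\e\|_{L^2(0,T;L^1)}$ are controlled). So the mechanism you invoke does not produce the estimate; deriving it is precisely the hard core of the proof.

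What is actually needed, and what the paper does, is the following. (i) Split the problematic integral over the sets $A_1^{\xi^\e},A_2^{\xi^\e},A_3^{\xi^\e}$: on $A_2$ it vanishes; on $A_1$ one proves that $\xi^\e$ is genuinely $L^2$-bounded there, via the convexity inequality $0=g(0)\ge g(\xi^\e+\tfrac{\kappa}{4\alpha K_0}{\rm Id})-(\sigma_\e-\e\dot p_\e):(\xi^\e+\tfrac{\kappa}{4\alpha K_0}{\rm Id})$ and the coercivity \eqref{eq:coerc-g}. (ii) On $A_3$ one uses the radial/tangential decomposition $\partial_k\sigma_\e-\e\partial_k\dot p_\e=(\partial_k h)B+\tfrac{h}{|\xi^\e_D|}(\partial_k\xi^\e_D-\ldots)$ with $h=2\Dc(|\xi^\e_D|-\tfrac{{\rm tr}\xi^\e}{2\alpha})$; the gradient $\nabla h$ is recovered by inverting $B$ \emph{in the pointwise equilibrium equation} $-\partial_j\sigma_{ij}=f_i$, which is where the hypothesis $\alpha\neq 1/\sqrt2$ (invertibility of $B$, \cite[Lemma~3.1]{Se}) enters --- a hypothesis your proposal never uses. (iii) The resulting factor $\iint_{A_3}h|\xi^\e_D|\psi$ is then converted, via the identity $h|\xi^\e_D|=(\sigma_{\e,D}-\e\dot p_{\e,D}):\xi^\e_D$ and one more integration by parts against the equilibrium equation, into terms controlled by $\|\dot u_\e\|^2_{L^2(Q_T;\R^2)}$. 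Without these three structural inputs your absorption step cannot be carried out, so as written the argument does not close.
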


\begin{proof}
All constants appearing in this proof are independent of $\e$. In order to lighten the notation, we do not explicitely write  the various $dx$, $ds$ inside the integrals. As in Corollary~\ref{cor:form-sigma}, we also omit the subscript $\e$ and we write 
$$\xi:=\dot p  + e - \e \CC^{-1}\dot p-\frac{\kappa}{4\alpha K_0}{\rm Id} \in  L^2(0,T;H^1_{\rm loc}(\O;\Ms)).$$
We start with the equilibrium equation
\begin{equation}\label{eq:div}
-{\rm div} \sigma= f\text{ in }\O \times (0,T).
\end{equation}
Let $\O' \subset \subset\O''\subset\subset \O$ and $\varphi \in \C^\infty_c(\O;[0,1])$ be a cut-off function satisfying $\varphi \equiv 1$ on $\O'$ and $\varphi\equiv 0$ on $\O \setminus \overline{\O''}$. By Proposition~\ref{prop:higher-reg},  if $t \in [0,T]$ is arbitrary, we have $\varphi^6 (\partial_k \dot u) \chi_{[0,t]} \in L^2(0,T;H^1_0(\O;\R^2))$. We then take the partial derivative of \eqref{eq:div} with respect to $x_k$ ($k=1,2$), and use the previous function as test function. Denoting by $Q_t:=\O \times (0,t)$ the space-time cylinder, we get
\begin{equation}\label{MGadd2}
\iint_{Q_t}  \partial_k \sigma : (E\partial_k \dot u) \varphi^6+\iint_{Q_t}\partial_k \sigma :(\partial_k \dot u \otimes \nabla \varphi^6)= \iint_{Q_t} \partial_k f \cdot (\partial_k\dot u) \varphi^6,
\end{equation}
where, from now on, we use the summation convention over repeated indexes.
Then, writing $\partial_k \dot u_j=2E_{kj}\dot u - \partial_j \dot u_k$, we find
$$
\iint_{Q_t}\partial_k \sigma :(\partial_k \dot u \otimes \nabla \varphi^6)
= 2 \iint_{Q_t} \partial_k  \sigma_{ij} (E_{kj} \dot u) \partial_i \varphi^6
 - \iint_{Q_t}  \partial_k  \sigma_{ij} \,\partial_j \dot u_k\,\partial_i \varphi^6.
$$
An integration by parts in the last integral at the right hand side, together with \eqref{eq:div}, yields
$$\iint_{Q_t} \partial_k  \sigma_{ij}\,\partial_j \dot u_k\,\partial_i \varphi^6= \iint_{Q_t} \partial_k f_i \dot u_k \partial_i \varphi^6- \iint_{Q_t} \dot u_k\, \partial_k  \sigma_{ij} \, \partial_j\partial_i \varphi^6,$$
and thus, using the previous identities in \eqref{MGadd2},
\begin{multline}\label{eq:stim1}
\iint_{Q_t}  \partial_k \sigma: (E\partial_k \dot u) \varphi^6
=- 2 \iint_{Q_t} \partial_k  \sigma_{ij} (E_{kj} \dot u) \partial_i \varphi^6
- \iint_{Q_t} \dot u_k\, \partial_k  \sigma_{ij} \, \partial_j\partial_i \varphi^6\\
 +\iint_{Q_t} \left(-\Delta f \cdot \dot u \varphi^6 -\partial_k f \cdot \dot u \partial_k \varphi^6  + \partial_k f_i \dot u_k \partial_i \varphi^6\right).
\end{multline}
Using the additive decomposition and the definition of $\xi$, we have
\begin{equation}\label{eq:useful-form}
E\dot u =\dot e +\dot p, \quad \dot p= \xi -e+\e\CC^{-1}\dot p + \frac{\kappa}{4\alpha K_0}{\rm Id}.
\end{equation}
Hence 
\begin{multline*}
\iint_{Q_t}  \partial_k \sigma: (E\partial_k \dot u) \varphi^6 =\iint_{Q_t}  \partial_k \sigma: \partial_k \dot e\, \varphi^6 + \e\iint_{Q_t}  \partial_k \dot p: \partial_k \dot p\, \varphi^6\\
+\iint_{Q_t}  (\partial_k \sigma-\e\partial_k\dot p): (\partial_k \xi - \partial_k e+\e\CC^{-1} \partial_k \dot p) \varphi^6,
\end{multline*}
and inserting inside \eqref{eq:stim1} yields
\begin{multline}\label{eq:stim10}
\iint_{Q_t}  \partial_k \sigma: \partial_k \dot e\, \varphi^6 + \e\iint_{Q_t} \partial_k \dot p: \partial_k \dot p\,  \varphi^6+ \iint_{Q_t}(\partial_k \sigma-\e\partial_k\dot p): \partial_k \xi\, \varphi^6\\
=- 2 \iint_{Q_t} \partial_k  \sigma_{ij} \, (E_{kj} \dot u)\, \partial_i \varphi^6
- \iint_{Q_t}  \dot u_k \,\partial_k  \sigma_{ij} \, \partial_j\partial_i \varphi^6\\
+\iint_{Q_t}   \CC^{-1}(\partial_k \sigma-\e\partial_k\dot p):(\partial_k \sigma-\e\partial_k\dot p)\varphi^6\\
+\iint_{Q_t} \left(-\Delta f \cdot \dot u \varphi^6 -\partial_k f \cdot \dot u \partial_k \varphi^6  + \partial_k f_i \dot u_k \partial_i \varphi^6\right).
\end{multline}

\medskip

{\bf Step 1.} We first bound from below the left hand side of \eqref{eq:stim10}. First, by Corollary~\ref{cor:form-sigma}
we have that $\partial_k \sigma-\e\partial_k\dot p =\mathcal G(\xi,\partial_k \xi)$, and thus
\begin{equation}\label{eq:sigmakxik}
(\partial_k \sigma-\e\partial_k\dot p):\partial_k\xi = 
\begin{cases}
\ds\sum_{k=1}^2 K_0({\rm tr}\partial_k \xi)^2 +2\mu |\partial_k \xi_D|^2  & \text{a.e.\ in } A^{\xi}_1,\smallskip\\
\ds 0 & \text{a.e.\ in }  A^{\xi}_2,\smallskip\\
\ds\sum_{k=1}^2  2\Dc \left(\frac{\xi_D:\partial_k\xi_D}{|\xi_D|}-\frac{{\rm tr}\partial_k \xi}{2\alpha}  \right)^2\smallskip\\
\hspace{0.2cm} \ds + \sum_{k=1}^22\Dc \left(|\xi_D| - \frac{{\rm tr}\xi}{2\alpha} \right) \frac{1}{| \xi_D|}\left(|\partial_k \xi_D|^2- \frac{(\xi_D : \partial_k\xi_D)^2}{|\xi_D|^2} \right) & \text{a.e.\ in }  A^{\xi}_3.
\end{cases}
\end{equation}
On the other hand, according to Remark~\ref{rem:CS}, we get that
\begin{multline*}
(\partial_k \sigma-\e\partial_k\dot p):(\partial_k \sigma-\e\partial_k\dot p)=\mathcal G(\xi,\partial_k \xi):(\partial_k \sigma-\e\partial_k\dot p)\\
 \leq \sqrt{\mathcal G(\xi,\partial_k \xi):\partial_k \xi}\sqrt{\mathcal G(\xi,\partial_k \sigma-\e\partial_k\dot p):(\partial_k \sigma-\e\partial_k\dot p)}\\
 \leq \sqrt{(\partial_k \sigma-\e\partial_k\dot p):\partial_k \xi}\sqrt{ C_*(\partial_k \sigma-\e\partial_k\dot p):(\partial_k \sigma-\e\partial_k\dot p)},
\end{multline*}
so that
$$(\partial_k \sigma-\e\partial_k\dot p):(\partial_k \sigma-\e\partial_k\dot p) \leq C_*(\partial_k \sigma-\e\partial_k\dot p): \partial_k \xi.$$
Thus, applying this inequality in \eqref{eq:stim10}, we get that
\begin{multline}\label{eq:stim11}
\iint_{Q_t}  \partial_k \sigma: \partial_k \dot e\, \varphi^6 + \e\iint_{Q_t}  \partial_k \dot p: \partial_k \dot p\,  \varphi^6\\
+\frac12\iint_{Q_t}  (\partial_k \sigma-\e\partial_k\dot p): \partial_k \xi\, \varphi^6+\frac{1}{2 C_*}\iint_{Q_t}(\partial_k \sigma-\e\partial_k\dot p):(\partial_k \sigma-\e\partial_k\dot p)\varphi^6\\
\leq - 2 \iint_{Q_t} \partial_k  \sigma_{ij} \, (E_{kj} \dot u)\, \partial_i \varphi^6
-  \iint_{Q_t} \dot u_k\, \partial_k  \sigma_{ij}\, \partial_j\partial_i \varphi^6\\
+\iint_{Q_t}   \CC^{-1}(\partial_k \sigma-\e\partial_k\dot p):(\partial_k \sigma-\e\partial_k\dot p)\varphi^6\\
+\iint_{Q_t} \left(-\Delta f \cdot \dot u \varphi^6 -\partial_k f \cdot \dot u \partial_k \varphi^6  + \partial_k f_i \dot u_k \partial_i \varphi^6\right).
\end{multline}

\medskip

{\bf Step 2.} We now bound from above the right hand side of \eqref{eq:stim11}. Since $\partial_i \varphi^6=\varphi^3 (6 \varphi^2 \partial_i \varphi)$, and $\partial_j\partial_i \varphi^6=\varphi^3(30\varphi\partial_j \varphi \partial_i \varphi + 6 \varphi^2 \partial_j\partial_i\varphi)$, we deduce from the Cauchy-Schwarz and Young inequalities that
$$
\iint_{Q_t} \left(-\Delta f \cdot \dot u \varphi^6 -\partial_k f \cdot \dot u \partial_k \varphi^6  + \partial_k f_i \dot u_k \partial_i \varphi^6\right)
\leq C \left(\|f\|^2_{L^2(0,T;H^2(\O;\R^2))} + \|\dot u\|^2_{L^2(Q_T;\R^2)} \right),
$$
$$-\iint_{Q_t} \dot u_k \, \partial_k  \sigma_{ij} \, \partial_j\partial_i \varphi^6  \leq C\left(\frac12 \iint_{Q_t}  \partial_k  \sigma:\partial_k  e\, \varphi^6\, dx\, ds + \|\dot u\|^2_{L^2(Q_T;\R^2)}\right),$$
and
$$\iint_{Q_t}   \CC^{-1}(\partial_k \sigma-\e\partial_k\dot p):(\partial_k \sigma-\e\partial_k\dot p)\varphi^6
\leq C \left(\frac12 \iint_{Q_t}  \partial_k  \sigma : \partial_k  e\, \varphi^6+\e^2\iint_{Q_t} \partial_k \dot p: \partial_k \dot p\, \varphi^6\right).$$
If $C\e<1/2$, the last integral can be absorbed by the second one at the left hand side of \eqref{eq:stim11}. We then replace in \eqref{eq:stim11} to get
\begin{multline}\label{eq:stim111}
\iint_{Q_t}  \partial_k \sigma : \partial_k \dot e\, \varphi^6 + \frac{\e}{2} \iint_{Q_t}  \partial_k \dot p: \partial_k \dot p\,  \varphi^6\\
+\frac12\iint_{Q_t} (\partial_k \sigma-\e\partial_k\dot p):  \partial_k \xi\, \varphi^6+\frac{1}{2C_*} \iint_{Q_t}(\partial_k \sigma-\e\partial_k\dot p):(\partial_k \sigma-\e\partial_k\dot p)\varphi^6\\
\leq C\left(\frac12 \iint_{Q_t}  \partial_k  \sigma :\partial_k  e\, \varphi^6+ \|\dot u\|^2_{L^2(Q_T;\R^2)} +\|f\|^2_{L^2(0,T;H^2(\O;\R^2))} \right)\\
- 2 \iint_{Q_t}\partial_k  \sigma_{ij}  (E_{kj} \dot u) \partial_i \varphi^6.
\end{multline}
Using again \eqref{eq:useful-form}, we have
\begin{multline*}
\iint_{Q_t} \partial_k  \sigma_{ij} (E_{kj} \dot u) \partial_i \varphi^6=\iint_{Q_t} \dot e_{kj} \, \partial_k  \sigma_{ij} \, \partial_i \varphi^6
+ \e \iint_{Q_t} \dot p_{kj} \,  \partial_k \dot p_{ij}\, \partial_i \varphi^6\\
 + \iint_{Q_t} (\partial_k  \sigma_{ij} -\e \partial_k \dot p_{ij})\left(\xi_{kj} -e_{kj}+\e(\CC^{-1}\dot p)_{kj} + \frac{\kappa}{4\alpha K_0}\delta_{kj}\right) \partial_i \varphi^6.
\end{multline*}
Applying again the Cauchy-Schwarz and Young inequalities, we get that
\begin{multline*}
-2\iint_{Q_t} \partial_k  \sigma_{ij} (E_{kj} \dot u) \partial_i \varphi^6 \leq -2\iint_{Q_t} (\partial_k  \sigma_{ij} -\e \partial_k \dot p_{ij})\xi_{kj} \, \partial_i\varphi^6\\
+C \Bigg(1+\frac12\iint_{Q_t}  \partial_k  \sigma : \partial_k  e\, \varphi^6 + \|e\|^2_{H^1([0,T];L^2(\O;\Ms))} 
+ \e \|\dot p\|^2_{L^2(Q_T;\Ms)}\Bigg) \\
 + \frac{\e}{4} \iint_{Q_t} \partial_k \dot p: \partial_k \dot p\,  \varphi^6+\frac{1}{4C_*}\iint_{Q_t}(\partial_k \sigma-\e\partial_k\dot p):(\partial_k \sigma-\e\partial_k\dot p)\varphi^6.
\end{multline*}
Inserting inside \eqref{eq:stim111} yields
\begin{multline}\label{eq:stim2}
\iint_{Q_t}  \partial_k \sigma : \partial_k \dot e\, \varphi^6 + \frac{\e}{4} \iint_{Q_t} \partial_k \dot p: \partial_k \dot p\,  \varphi^6\\
+\frac12 \iint_{Q_t}  (\partial_k \sigma-\e\partial_k\dot p): \partial_k \xi\, \varphi^6+\frac{1}{4C_*}\iint_{Q_t}(\partial_k \sigma-\e\partial_k\dot p):(\partial_k \sigma-\e\partial_k\dot p)\varphi^6\\
\leq - 2\iint_{Q_t} (\partial_k  \sigma_{ij} -\e \partial_k \dot p_{ij})\xi_{kj}\, \partial_i\varphi^6+ C\Bigg(1+\frac12 \iint_{Q_t}  \partial_k  \sigma : \partial_k  e\, \varphi^6 + \|\dot u\|^2_{L^2(Q_T;\R^2)}\\
+ \|e\|^2_{H^1([0,T];L^2(\O;\Ms))} + \e \|\dot p\|^2_{L^2(Q_T;\Ms)} +\|f\|^2_{L^2(0,T;H^2(\O;\R^2))}\Bigg).
\end{multline}
The remaining of the proof consists in absorbing the integral
$$\iint_{Q_t} (\partial_k  \sigma_{ij} -\e \partial_k \dot p_{ij})\xi_{kj}\, \partial_i\varphi^6$$
 by the left hand side of \eqref{eq:stim2}.

\medskip

{\bf Step 3.} We consider the following decomposition:
$$\iint_{Q_t} (\partial_k  \sigma_{ij}-\e \partial_k \dot p_{ij}) \xi_{kj}\, \partial_i \varphi^6=\sum_{r=1}^3 \iint_{A_r^\xi \cap Q_t} (\partial_k  \sigma_{ij}-\e \partial_k \dot p_{ij}) \xi_{kj}\, \partial_i \varphi^6.$$
We first observe that, according to Corollary~\ref{cor:form-sigma}, $\partial_k \sigma-\e\partial_k\dot p=0$ a.e.\ in $A_2^\xi$, so that 
\begin{equation}\label{eq:Omega2}
\iint_{A_2^\xi \cap Q_t} (\partial_k  \sigma_{ij}-\e \partial_k \dot p_{ij}) \xi_{kj}\, \partial_i \varphi^6=0.
\end{equation}
For $r=1$, using that $\partial_i \varphi^6=\varphi^3 (6 \varphi^2 \partial_i \varphi)$, together with the Cauchy-Schwarz and Young inequalities, yields
\begin{multline}\label{eq:Omega1}
\left|\iint_{A_1^\xi \cap Q_t} (\partial_k  \sigma_{ij}-\e \partial_k \dot p_{ij}) \xi_{kj}\, \partial_i \varphi^6\right|\\
 \leq \frac{1}{8C_*} \iint_{Q_t}(\partial_k \sigma-\e\partial_k\dot p):(\partial_k \sigma-\e\partial_k\dot p)\varphi^6 + C\|\xi\|^2_{L^2(A_1^\xi;\Ms)}.
\end{multline}

By formula \eqref{form-sigma} we have that
$$\sigma-\e\dot p=Dg\left(\xi + \frac{\kappa}{4\alpha K_0}{\rm Id}\right).$$
By convexity of $g$ this implies that
$$0= g(0)  \geq g\left(\xi + \frac{\kappa}{4\alpha K_0}{\rm Id}\right) - (\sigma-\e\dot p):\left(\xi + \frac{\kappa}{4\alpha K_0}{\rm Id}\right)$$
a.e.\ in $\O \times (0,T)$.
Integrating the previous inequality over $A_1^\xi$ and using the coercivity property \eqref{eq:coerc-g} of $g$ yield
$$a_0\iint_{A_1^\xi}\left|\xi + \frac{\kappa}{4\alpha K_0}{\rm Id}\right|^2  \leq \iint_{A_1^\xi}g\left(\xi + \frac{\kappa}{4\alpha K_0}{\rm Id}\right)  \leq \iint_{A_1^\xi}(\sigma-\e\dot p):\left(\xi + \frac{\kappa}{4\alpha K_0}{\rm Id}\right).$$
Owing to the Cauchy-Schwarz inequality, we deduce that 
$$\iint_{A_1^\xi}\left|\xi + \frac{\kappa}{4\alpha K_0}{\rm Id}\right|^2 \leq \frac{1}{a_0^2} \iint_{Q_T} |\sigma-\e\dot p|^2,$$
and therefore,
\begin{equation}\label{eq:estim-xi}
\|\xi\|^2_{L^2(A_1^\xi;\Ms)} \leq C \left(1+\|e\|^2_{L^2(Q_T;\Ms)} +\e\|\dot p\|^2_{L^2(Q_T;\Ms)} \right).
\end{equation}

Inserting \eqref{eq:Omega2}, \eqref{eq:Omega1}, and \eqref{eq:estim-xi} inside \eqref{eq:stim2}, leads to 
\begin{multline*}
\iint_{Q_t}  \partial_k \sigma : \partial_k \dot e\, \varphi^6 + \frac{\e}{4}\iint_{Q_t} \partial_k \dot p: \partial_k \dot p\,  \varphi^6\\
+\frac12\iint_{Q_t}  (\partial_k \sigma-\e\partial_k\dot p): \partial_k \xi\, \varphi^6+\frac{1}{8C_*}\iint_{Q_t}(\partial_k \sigma-\e\partial_k\dot p):(\partial_k \sigma-\e\partial_k\dot p)\varphi^6\\
\leq  C\Bigg(1+\frac12 \iint_{Q_t}  \partial_k  \sigma : \partial_k  e\, \varphi^6 + \|\dot u\|^2_{L^2(Q_T;\R^2)}
 + \|e\|^2_{H^1([0,T];L^2(\O;\Ms))}\\ + \e \|\dot p\|^2_{L^2(Q_T;\Ms)} +\|f\|^2_{L^2(0,T;H^2(\O;\R^2))} +\sum_{i,k=1}^2 \iint_{ A_3^\xi\cap Q_t} |\partial_k  \sigma -\e \partial_k \dot p| |\xi| |\partial_i\varphi^6|\Bigg).
\end{multline*}

According to \eqref{eq:xiA3}, and by definition of the set $A_3^\xi$, we have $|\xi | \leq c|\xi_D|$ a.e.\ in $A_3^\xi$, hence
\begin{multline}\label{eq:stim6}
\iint_{Q_t}  \partial_k \sigma : \partial_k \dot e\, \varphi^6 + \frac{\e}{4}\iint_{Q_t}  \partial_k \dot p: \partial_k \dot p\,  \varphi^6\\
+\frac12\iint_{Q_t}  (\partial_k \sigma-\e\partial_k\dot p): \partial_k \xi\, \varphi^6+\frac{1}{8C_*}\iint_{Q_t}(\partial_k \sigma-\e\partial_k\dot p):(\partial_k \sigma-\e\partial_k\dot p)\varphi^6 \\
\leq  C\Bigg(1+\frac12 \iint_{Q_t}  \partial_k  \sigma : \partial_k  e\, \varphi^6 + \|\dot u\|^2_{L^2(Q_T;\R^2)}
 + \|e\|^2_{H^1([0,T];L^2(\O;\Ms))}\\ + \e \|\dot p\|^2_{L^2(Q_T;\Ms)} +\|f\|^2_{L^2(0,T;H^2(\O;\R^2))}+\sum_{i,k=1}^2 \iint_{A_3^\xi \cap Q_t} |\partial_k  \sigma -\e \partial_k \dot p| |\xi_D| |\partial_i\varphi^6|\Bigg).
\end{multline}

\medskip

{\bf Step 4.} We next focus on the last term
$$\sum_{i,k=1}^2 \iint_{A_3^\xi \cap Q_t} |\partial_k  \sigma -\e \partial_k \dot p| |\xi_D| |\partial_i\varphi^6|$$
in the right hand side of \eqref{eq:stim6}. We will show that this term can be absorbed by the left hand side of  \eqref{eq:stim6} by using the explicit expression \eqref{form-sigma} of $\sigma-\e\dot p$. Let us consider the functions
$$h:=2\Dc\left(|\xi_D|-\frac{{\rm tr} \xi }{2\alpha}  \right), \quad B:= \frac{\xi_D}{|\xi_D|}-\frac{1}{2\alpha}{\rm Id} .$$
According to \eqref{form-sigma} and \eqref{eq:G}, we have
\begin{equation}\label{eq:hat-sigma}
 \sigma-\e\dot p = \frac{\kappa}{2\alpha} {\rm Id} + h B \quad  \text{a.e.\ in } A_3^\xi,
\end{equation}
and by Corollary~\ref{cor:form-sigma} we obtain
\begin{equation}\label{eq:dk-hat-sigma}
\partial_k  \sigma-\e\partial_k\dot p=(\partial_k h) B+ \frac{h}{| \xi_D|}\left(\partial_k \xi_D- \frac{\xi_D : \partial_k\xi_D}{|\xi_D|^2}\xi_D \right)\hfill \text{ a.e.\ in }  A^{\xi}_3.
\end{equation}
As a consequence, since $|B|\leq 1+\alpha^{-1}$, we deduce that
\begin{multline}\label{eq:avant-derniere-est}
\iint_{A_3^\xi \cap Q_t} |\partial_k  \sigma-\e\partial_k\dot p| |\xi_D|  |\nabla \varphi^6|\\
\leq (1+\alpha^{-1}) \iint_{A_3^\xi \cap Q_t} |\nabla h| |\xi_D|  |\nabla \varphi^6| + \iint_{A_3^\xi \cap Q_t} h\left| \partial_k \xi_D- \frac{\xi_D : \partial_k\xi_D}{|\xi_D|^2}\xi_D \right|  |\nabla \varphi^6|.
\end{multline}
We start by estimating the first integral in the right hand side of \eqref{eq:avant-derniere-est}. Let us show that there exists a constant $c_\alpha>0$ (only depending on $\alpha$) such that
\begin{equation}\label{nabla h}
|\nabla h|\leq c_\alpha \e (\partial_k \dot p : \partial_k \dot p)^{1/2}
+\frac{c_\alpha h}{|\xi_D|}\left[ \sum_{l=1}^2\left( | \partial_l \xi_D|^2 - \frac{(\xi_D : \partial_l \xi_D)^2}{|\xi_D|^2}\right) \right]^{1/2}  + c_\alpha |f|
\quad \text{ a.e.\ in }  A^{\xi}_3.
\end{equation}
Writing componentwise the equilibrium equation \eqref{eq:div} yields  $-\partial_j  \sigma_{ij}=f_i$ in $\O \times (0,T)$, and using \eqref{eq:dk-hat-sigma},
$$ \e\partial_j \dot p_{ij}+ (\partial_j h) B_{ij}+ \frac{h}{| \xi_D|}\left(\partial_j (\xi_D)_{ij}- \frac{\xi_D : \partial_j\xi_D}{|\xi_D|^2}(\xi_D)_{ij}\right)  +f_i=0\quad \text{ a.e.\ in }  A^{\xi}_3.$$
According to \cite[Lemma~3.1]{Se}, the matrix $B$ is invertible if and only if $\alpha \neq 1/\sqrt 2$, and in that case, $|B^{-1}|\leq c_\alpha$ for some constant $c_\alpha>0$. Hence
$$\partial_i h=-\e (B^{-1})_{ik}\,\partial_j \dot p_{kj}
-\frac{h}{|\xi_D|}(B^{-1})_{ik}\left(\partial_j (\xi_D)_{kj}- \frac{\xi_D : \partial_j\xi_D}{|\xi_D|^2}(\xi_D)_{kj}\right) -(B^{-1})_{ik}f_k \quad \text{ a.e.\ in }  A^{\xi}_3,$$
and
\begin{multline*}
|\nabla h| \leq 
c_\alpha \e (\partial_k \dot p : \partial_k \dot p)^{1/2}
+ \frac{c_\alpha h}{|\xi_D|}\left[\sum_{j,k,l=1}^2\left| \partial_l (\xi_D)_{kj}- \frac{\xi_D : \partial_l\xi_D}{|\xi_D|^2}(\xi_D)_{kj}\right|^2\right]^{1/2}  + c_\alpha |f|
\\
= c_\alpha \e (\partial_k \dot p : \partial_k \dot p)^{1/2}
+ \frac{c_\alpha h}{|\xi_D|}\left[ \sum_{l=1}^2 \left(| \partial_l \xi_D|^2 - \frac{(\xi_D : \partial_l \xi_D)^2}{|\xi_D|^2}\right) \right]^{1/2}  + c_\alpha |f|
\quad \text{ a.e.\ in }  A^{\xi}_3,
 \end{multline*}
which proves \eqref{nabla h}. According to this estimate, the fact that  $\nabla \varphi^6=\varphi^3 (6 \varphi^2 \nabla \varphi)$, and the Cauchy-Schwarz inequality, we get that
\begin{eqnarray}
\lefteqn{\iint_{A_3^\xi \cap Q_t}  |\nabla h| |\xi_D|  |\nabla \varphi^6|  \leq 
c_\alpha\e \iint_{A_3^\xi \cap Q_t}  (\partial_k \dot p : \partial_k \dot p)^{1/2} |\xi_D| |\nabla \varphi^6|}
\nonumber \\
& & {}+c_\alpha\iint_{A_3^\xi \cap Q_t}  h\left[ \sum_{l=1}^2\left( | \partial_l \xi_D|^2 - \frac{(\xi_D : \partial_l \xi_D)^2}{|\xi_D|^2} \right)\right]^{1/2}|\nabla \varphi^6| +c_\alpha \iint_{Q_t}|f| |\xi_D| |\nabla \varphi^6|
\nonumber \\ 
&\leq &
6 c_\alpha \e \left(\iint_{A_3^\xi \cap Q_t}  \partial_k \dot p : \partial_k \dot p \, \varphi^6\right)^{1/2}
\left(\iint_{A_3^\xi \cap Q_t}  |\xi_D|^2\varphi^4|\nabla \varphi|^2\right)^{1/2}
\nonumber \\
& &+ 6 c_\alpha \left(\iint_{A_3^\xi \cap Q_t}  \frac{h}{|\xi_D|} \left[ \sum_{l=1}^2 \left(| \partial_l \xi_D|^2 - \frac{(\xi_D : \partial_l \xi_D)^2}{|\xi_D|^2}\right) \right] \varphi^6\right)^{1/2}\left(\iint_{A_3^\xi \cap Q_t}  h|\xi_D|\varphi^4|\nabla \varphi|^2\right)^{1/2}
\nonumber \\
& &  + 6c_\alpha \|f\|_{L^\infty(Q_T;\R^2)} \|\xi_D\|_{L^1(Q_T;\Ms)}\nonumber\\
& \leq & 
6 c_\alpha \left( \e \iint_{Q_t} \partial_k \dot p : \partial_k \dot p \, \varphi^6\right)^{1/2}
\left( \e\iint_{Q_t} |\xi_D|^2\varphi^4|\nabla \varphi|^2\right)^{1/2}
\nonumber \\
&& 
{}+ 6 c_\alpha \left(\iint_{A_3^\xi \cap Q_t}  (\partial_k  \sigma-\e\partial_k\dot p): \partial_k\xi\, \varphi^6 \right)^{1/2}\left(\iint_{A_3^\xi \cap Q_t}  h|\xi_D|\varphi^4|\nabla \varphi|^2\right)^{1/2}\nonumber\\
& & {} + 6c_\alpha \|f\|_{L^\infty(Q_T;\R^2)} \|\xi_D\|_{L^1(Q_T;\Ms)},
\label{eq:avant-derniere-est2}
\end{eqnarray}
where we used \eqref{eq:sigmakxik} in the last inequality. 

We next estimate the second integral in the right hand side of \eqref{eq:avant-derniere-est}. Using again that $\nabla \varphi^6=\varphi^3 (6 \varphi^2 \nabla \varphi)$, together with the Cauchy-Schwarz inequality, we infer that
\begin{multline}\label{eq:avant-derniere-est3}
\sum_{k=1}^2\iint_{A_3^\xi \cap Q_t}  h\left| \partial_k \xi_D- \frac{\xi_D : \partial_k\xi_D}{|\xi_D|^2}\xi_D \right|  |\nabla \varphi^6|\\
 \leq 6 \left(\iint_{A_3^\xi \cap Q_t}  \frac{h}{|\xi_D|} \left[ \sum_{k=1}^2 \left(| \partial_k \xi_D|^2 - \frac{(\xi_D : \partial_k \xi_D)^2}{|\xi_D|^2}\right) \right] \varphi^6 \right)^{1/2}\left(\iint_{A_3^\xi \cap Q_t}  h|\xi_D|\varphi^4|\nabla \varphi|^2\right)^{1/2}\\
\leq  6\left(\iint_{A_3^\xi \cap Q_t}  (\partial_k  \sigma-\e\partial_k\dot p): \partial_k\xi\, \varphi^6\right)^{1/2}\left(\iint_{A_3^\xi \cap Q_t}  h|\xi_D|\varphi^4|\nabla \varphi|^2\right)^{1/2},
\end{multline}
where, once more, we used \eqref{eq:sigmakxik} in the last inequality. Gathering \eqref{eq:avant-derniere-est}, \eqref{eq:avant-derniere-est2}, \eqref{eq:avant-derniere-est3}, inserting inside \eqref{eq:stim6}, and using Young's inequality yield
\begin{multline}\label{eq:stim7bis}
\iint_{Q_t}  \partial_k \sigma : \partial_k \dot e\, \varphi^6 + \frac{\e}{8} \iint_{Q_t}   \partial_k \dot p:  \partial_k \dot p\, \varphi^6\\
+\frac14 \iint_{Q_t}  (\partial_k \sigma-\e\partial_k\dot p): \partial_k \xi\, \varphi^6+\frac{1}{8C_*}\iint_{Q_t} (\partial_k \sigma-\e\partial_k\dot p):(\partial_k \sigma-\e\partial_k\dot p)\varphi^6 \\
\leq  C\Bigg(1+\frac12\iint_{Q_t}  \partial_k  \sigma: \partial_k  e\, \varphi^6+ \|\dot u\|^2_{L^2(Q_T;\R^2)} + \|e\|^2_{H^1([0,T];L^2(\O;\Ms))}\\
 + \e \|\dot p\|^2_{L^2(Q_T;\Ms)}+\e \|\xi_D\|^2_{L^2(Q_T;\Ms)}+ \|\xi_D\|^2_{L^1(Q_T;\Ms)}\\
 +\|f\|^2_{L^2(0,T;H^2(\O;\R^2))} +\|f\|^2_{L^\infty(Q_T;\R^2)}+ \iint_{A_3^\xi \cap Q_t}  h|\xi_D|\varphi^4|\nabla \varphi|^2\Bigg).
\end{multline}
By the definition of $\xi$ we obtain
\begin{eqnarray}
\sqrt \e \|\xi_D\|_{L^2(Q_T;\Ms)} & \leq & C\sqrt\e\|\dot p\|_{L^2(Q_T;\Ms)}+ \|e\|_{L^2(Q_T;\Ms))},\label{est xiD}\\
\|\xi_D\|_{L^1(Q_T;\Ms)} & \leq & C\left(\|\dot p\|_{L^1(Q_T;\Ms)}+ \|e\|_{L^2(Q_T;\Ms))}\right).\nonumber
\end{eqnarray}
Thus, equation \eqref{eq:stim7bis} can be rewritten as
\begin{multline}\label{eq:stim7}
\iint_{Q_t}  \partial_k \sigma : \partial_k \dot e\, \varphi^6 + \frac{\e}{8} \iint_{Q_t}   \partial_k \dot p:  \partial_k \dot p\, \varphi^6\\
+\frac14 \iint_{Q_t}  (\partial_k \sigma-\e\partial_k\dot p): \partial_k \xi\, \varphi^6+\frac{1}{8C_*}\iint_{Q_t} (\partial_k \sigma-\e\partial_k\dot p):(\partial_k \sigma-\e\partial_k\dot p)\varphi^6 \\
\leq  C\Bigg(1+\frac12\iint_{Q_t}  \partial_k  \sigma: \partial_k  e\, \varphi^6 + \|\dot u\|^2_{L^2(Q_T;\R^2)} + \|e\|^2_{H^1([0,T];L^2(\O;\Ms))}\\
 + \e \|\dot p\|^2_{L^2(Q_T;\Ms)} +\|\dot p\|^2_{L^1(Q_T;\Ms)} +\|f\|^2_{L^2(0,T;H^2(\O;\R^2))} +\|f\|^2_{L^\infty(Q_T;\R^2)}\\
+ \iint_{A_3^\xi \cap Q_t}  h|\xi_D|\varphi^4|\nabla \varphi|^2\Bigg).
\end{multline}

\medskip

{\bf Step 5.} We now estimate the last term
$$\iint_{A_3^\xi \cap Q_t}  h|\xi_D|\varphi^4|\nabla \varphi|^2$$
in the right hand side of \eqref{eq:stim7}. To simplify notation, let us denote by $\psi:=\varphi^4 |\nabla \varphi|^2$. Firstly, according to \eqref{eq:hat-sigma}, we have that $ \sigma_D-\e\dot p_D=h \xi_D/|\xi_D|$ a.e.\ in $A_3^\xi$, and thus $h|\xi_D|=(\sigma_D-\e\dot p_D):\xi_D$ a.e.\ in $A_3^\xi$. This implies that
\begin{equation}\label{eq:estim71}
\iint_{A_3^\xi \cap Q_t}  h|\xi_D|\psi =\iint_{A_3^\xi \cap Q_t}   (\sigma_D-\e \dot p_D) :\xi_D \psi.
\end{equation}
According to \eqref{form-sigma} and \eqref{eq:G}, we have that $\sigma_D-\e \dot p_D=0$ a.e.\ in $A_2^\xi$, and thus the Cauchy-Schwarz inequality yields
$$\iint_{A_3^\xi \cap Q_t}  h|\xi_D|\psi  \leq \iint_{Q_t}  (\sigma_D-\e \dot p_D) :\xi_D\psi+C\|\sigma-\e \dot p\|_{L^2(Q_T;\Ms)} \|\xi\|_{L^2(A_1^\xi;\Ms)}.$$
On the other hand, the definition of $\xi$ and the additive decomposition \eqref{eq:useful-form} yield
\begin{multline}\label{eq:estim72}
\iint_{Q_t}   (\sigma_D-\e \dot p_D) :\xi_D\psi\leq\iint_{Q_t}    \sigma :(E\dot u)\psi - \frac{1}{2}\iint_{Q_t}  ( {\rm tr}\sigma) ({\rm div}\dot u)\psi\\
+\|\sigma\|_{L^2(Q_T:\Ms)}\left(\|e\|_{H^1([0,T];L^2(\O:\Ms))}+ \e \|\dot p\|_{L^2(Q_T;\Ms)} \right)\\
+ \e \|\dot p\|_{L^2(Q_T;\Ms)} \|\xi_D\|_{L^2(Q_T;\Ms)}.
\end{multline}
Inserting \eqref{eq:estim71} and \eqref{eq:estim72} into \eqref{eq:stim7}, and using \eqref{est xiD} lead to 
\begin{multline}\label{eq:stim8}
\iint_{Q_t}  \partial_k \sigma: \partial_k \dot e \, \varphi^6 + \frac{\e}{8}\iint_{Q_t} \partial_k \dot p: \partial_k \dot p\, \varphi^6\\
+\frac14\iint_{Q_t}    (\partial_k \sigma-\e\partial_k\dot p): \partial_k \xi\, \varphi^6+\frac{1}{8C_*} \iint_{Q_t}  (\partial_k \sigma-\e\partial_k\dot p):(\partial_k \sigma-\e\partial_k\dot p)\varphi^6\\
\leq  C\Bigg(1+\frac12\iint_{Q_t}  \partial_k  \sigma:\partial_k  e\, \varphi^6 + \|\dot u\|^2_{L^2(Q_T;\R^2)}
 + \|e\|^2_{H^1([0,T];L^2(\O;\Ms))}\\
 + \e \|\dot p\|^2_{L^2(Q_T;\Ms)} +\|\dot p\|^2_{L^1(Q_T;\Ms)} +\|f\|^2_{L^2(0,T;H^2(\O;\R^2))} +\|f\|^2_{L^\infty(Q_T;\R^2)}\\
 +\iint_{Q_t}    \sigma :(E\dot u)\psi  - \frac{1}{2}\iint_{Q_t}  ( {\rm tr}\sigma) ({\rm div}\dot u)\psi\Bigg).
\end{multline}

{\bf Step 6.} We finally estimate both last integrals in the right hand side of \eqref{eq:stim8}. Integrating by parts, and using again the Cauchy-Schwarz and Young inequalities yield
\begin{multline*}
\iint_{Q_t}  \sigma :(E\dot u)\psi  - \frac{1}{2}\iint_{Q_t}( {\rm tr}\sigma) ({\rm div}\dot u)\psi
= -\iint_{Q_t} ({\rm div }\sigma) \cdot \dot u \psi -\iint_{Q_t} \sigma : (\dot u \otimes \nabla \psi)\\
 + \frac{1}{2} \iint_{Q_t} \nabla ( {\rm tr}\sigma) \cdot\dot u\psi +\frac{1}{2} \iint_{Q_t} {\rm tr}\sigma (\dot u \cdot \nabla \psi)\\
 \leq C\left(\frac12\iint_{Q_t}  \partial_k  \sigma : \partial_k  e\, \varphi^6+\|e\|^2_{L^2(Q_T;\Ms)}+\|\dot  u\|^2_{L^2(Q_T;\Ms)}\right).
\end{multline*}
Inserting this ultimate relation inside \eqref{eq:stim8} leads to
\begin{multline}\label{eq:stim9}
\iint_{Q_t}  \partial_k \sigma : \partial_k \dot e\, \varphi^6+ \frac{\e}{8} \iint_{Q_t} \partial_k \dot p : \partial_k \dot p\, \varphi^6\\
+\frac14 \iint_{Q_t}  (\partial_k \sigma-\e\partial_k\dot p): \partial_k \xi\, \varphi^6+\frac{1}{8C_*} \iint_{Q_t}(\partial_k \sigma-\e\partial_k\dot p):(\partial_k \sigma-\e\partial_k\dot p)\varphi^6\\
\leq  C\bigg(1+\frac12\iint_{Q_t}  \partial_k  \sigma: \partial_k  e\, \varphi^6 + \|\dot u\|^2_{L^2(Q_T;\R^2)} + \|e\|^2_{H^1([0,T];L^2(\O;\Ms))} + \e \|\dot p\|^2_{L^2(Q_T;\Ms)}\\
+\|\dot p\|^2_{L^1(Q_T;\Ms)} +\|f\|^2_{L^2(0,T;H^2(\O;\R^2))} +\|f\|^2_{L^\infty(Q_T;\R^2)}\bigg).
\end{multline}

{\bf Step 7.} The final step rests on an application of Gronwall's Lemma. For all $t \in [0,T]$ let us denote 
$$q(t):=\frac12\int_{\O} \partial_k  \sigma(t):\partial_k  e(t)\, \varphi^6\, dx.$$
Neglecting nonnegative terms at the left hand side of \eqref{eq:stim9} (note that the third integral at the left hand side of \eqref{eq:stim9}
is nonnegative by \eqref{eq:sigmakxik}), we can write \eqref{eq:stim9} as
$$q(t) \leq  q(0)+ a \left(\int_0^t q(s)\, ds + b\right),$$
where $a:=C$ and
\begin{multline*}b:=1+\|\dot u\|^2_{L^2(Q_T;\R^2)} + \|e\|^2_{H^1([0,T];L^2(\O;\Ms))} + \e \|\dot p\|^2_{L^2(Q_T;\Ms)}\\
 +\|\dot p\|^2_{L^1(Q_T;\Ms)} +\|f\|^2_{L^2(0,T;H^2(\O;\R^2))} +\|f\|^2_{L^\infty(Q_T;\R^2)}.
\end{multline*}
By applying Gronwall's inequality we deduce that
$$q(t) \leq (q(0) + ab)e^{at} \quad \text{ for all }t \in [0,T].$$
Using \eqref{eq:coerc-Q}, together with the fact that $\varphi=1$ on $\O'$,  $\varphi=0$ on $\O\setminus \overline{\O''}$ and $\varphi\geq 0$, 
the previous inequality leads to
\begin{multline}\label{eq:endly}
\int_{\O'}  \partial_k  \sigma(t):\partial_k  \sigma(t)\, dx \leq  C\left( 1+ \|e_0\|^2_{H^1(\O'';\Ms)} +\|\dot u\|^2_{L^2(Q_T;\R^2)}+ \|e\|^2_{H^1([0,T];L^2(\O;\Ms))}\right.\\
\left.  + \e \|\dot p\|^2_{L^2(Q_T;\Ms)}+\|\dot p\|^2_{L^1(Q_T;\Ms)}+\|f\|^2_{L^2(0,T;H^2(\O;\R^2))} +\|f\|^2_{L^\infty(Q_T;\R^2)}\right).
\end{multline}
Using this inequality, the {\it a priori} estimates \eqref{eq:first-estim}, \eqref{eq:second-estim}, and \eqref{eq:third-estim-bis}, and the continuous embedding of $BD(\O)$ into $L^2(\O;\R^2)$, we conclude that the right hand side of \eqref{eq:endly} is uniformly bounded with respect to $\e$.
\end{proof}

We are now in position to complete the proof of our regularity result.

\begin{proof}[Proof of Theorem~\ref{thm:main}]
According to Propositions~\ref{prop:perzyna}, \ref{prop:convergence}, and \ref{prop:unif-bound}, the solution $\sigma_\e$ to the viscoplastic model is uniformly bounded in $L^\infty(0,T;H^1_{\rm loc}(\O;\Ms))$ and converges to $\sigma$ weakly in $H^1([0,T];L^2(\O;\Ms))$,
where $\sigma$ is the solution of the perfectly plastic model. By uniqueness of weak limits, we infer that $\sigma \in L^\infty(0,T;H^1_{\rm loc}(\O;\Ms))$.
\end{proof}

\section{Strong form of the flow rule}

In this section we use the Sobolev regularity property established in Theorem~\ref{thm:main}  
to define a suitable intrisic representative of the stress, for which the flow rule holds in a pointwise form, and not only in the measure theoretical sense of \eqref{eq:fr} (see \cite{A2,DMDSM,FGM} for analogous results). To this aim we define the averages of the stress as follows: for all $(x,t) \in \overline \O \times [0,T]$, and all $r>0$, let 
$$\sigma_r(x,t):=\frac{1}{\LL^2(B_r(x) \cap \O)} \int_{B_r(x) \cap \O}\sigma(y,t)\, dy.$$
From Theorem~\ref{thm:main}, we know that $\sigma(t) \in H^1_{\rm loc}(\O;\Ms)$ for a.e.\ $t \in [0,T]$. Therefore, for a.e.\ $t \in [0,T]$ there exists a quasi-continuous representative of $\sigma(t)$, denoted by $\tilde \sigma(t)$, such that
$$\sigma_r(t) \to \tilde \sigma(t) \quad \text{ in }\O \setminus Z_t,$$
where $Z_t \subset \O$ is a Borel set of zero capacity in $\O$. 

\begin{thm}\label{thm:strong-form}
For a.e.\ $t \in [0,T]$ the quasi-continuous representative of the stress $\tilde \sigma(t)$ is $|\dot p(t)|$-measurable and 
$$H\left(\frac{d\dot p(t)}{d|\dot p(t)|} \right) = \tilde \sigma(t) : \frac{d\dot p(t)}{d|\dot p(t)|} \quad |\dot p(t)|\text{-a.e.\ in }\O.$$
\end{thm}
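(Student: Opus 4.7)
The plan is to upgrade the measure-theoretic flow rule \eqref{eq:fr} to a pointwise statement by identifying, inside $\O$, the distributional duality $[\sigma(t){\,:\,}\dot p(t)]$ with the pointwise product $\tilde \sigma(t){\,:\,}\dot p(t)$. This identification requires showing that $|\dot p(t)|$ does not charge Borel subsets of $\O$ of zero $H^1$-capacity, which in turn makes $\tilde\sigma(t)$ well-defined $|\dot p(t)|$-a.e.\ and $|\dot p(t)|$-measurable.

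The crucial preliminary step is thus to show that, for a.e.\ $t \in [0,T]$, $|\dot p(t)| \res \O$ is absolutely continuous with respect to the $H^1$-capacity. Since $\dot p(t) = E\dot u(t) - \dot e(t)$ in $\O$ and $\dot e(t) \in L^2(\O;\Ms)$ gives no contribution on capacity-zero sets, the matter reduces to controlling the singular part of $E\dot u(t)$. The plastic admissibility $|\dot p_D(t)| \leq {\rm tr}\,\dot p(t)/(2\alpha)$ -- a consequence of the finiteness of $\HH(\dot p(t))$ combined with Lemma~\ref{lem:H} -- forces ${\rm tr}\,\dot p(t) \geq 0$, and this signed constraint, characteristic of the Drucker-Prager / Mohr-Coulomb geometry, is what enables one to adapt the capacitary arguments of \cite{DMDSM}. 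This step is the main obstacle of the proof.

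Once this is at hand, we fix $U \subset\subset \O$ and regularize $\sigma(t) \in H^1(U;\Ms)$, which is afforded by Theorem~\ref{thm:main}, by mollification: $\sigma_n \in \C^\infty(\overline U;\Ms)$ with $\sigma_n \to \sigma(t)$ in $H^1(U;\Ms)$ and $\div \sigma_n \to -f(t)$ in $L^2(U;\R^2)$. Up to a subsequence, $\sigma_n \to \tilde\sigma(t)$ quasi-everywhere in $U$ (see \cite[Section~6.1]{AH}), hence $|\dot p(t)|$-a.e.\ by the previous step. Remark~\ref{product} yields
$$
\langle [\sigma_n{\,:\,}\dot p(t)], \varphi\rangle = \int_{\overline \O} \varphi\, \sigma_n{\,:\,}d\dot p(t) \qquad \text{ for all } \varphi \in \C^\infty_c(U),
$$
and the continuity of the pairing in Definition~\ref{def:duality} in the topology of $L^2\cap\{\div\,\cdot \in L^2\}$ -- meaningful in dimension $2$ through $BD(\O)\hookrightarrow L^2(\O;\R^2)$ -- lets one pass to the limit on the left-hand side. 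On the right-hand side, first replacing $\sigma_n$ by its projection $P_K\sigma_n$ onto $\K$ (which preserves the $H^1$-convergence since $P_K$ is $1$-Lipschitz and $\sigma(t)\in\K$), a dominated convergence argument based on the uniform local bound on $(P_K\sigma_n)_D$ and on the sign property ${\rm tr}\,\dot p(t)\ge 0$ yields $\int_\O \varphi\, \tilde\sigma(t){\,:\,}d\dot p(t)$ in the limit. Consequently $[\sigma(t){\,:\,}\dot p(t)]\res\O = \tilde\sigma(t){\,:\,}\dot p(t)$ as measures in $\O$.

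The conclusion is then immediate: combining the previous identity with \eqref{eq:fr} gives $H(\dot p(t)) = \tilde\sigma(t){\,:\,}\dot p(t)$ as measures in $\O$. Since $H$ is positively $1$-homogeneous, $H(d\dot p(t)/d|\dot p(t)|)$ is the Radon-Nikodym derivative of $H(\dot p(t))$ with respect to $|\dot p(t)|$, and differentiating the measure identity against $|\dot p(t)|$ recovers the pointwise formula.
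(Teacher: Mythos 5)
Your overall architecture coincides with the paper's: regularize the stress, use quasi-everywhere convergence of the regularizations to $\tilde\sigma(t)$, show that $|\dot p(t)|$ does not charge capacity-null sets, pass to the limit in the duality pairing, and conclude via \eqref{eq:fr}. However, two of your key steps do not hold as stated.

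First, the step you single out as ``the main obstacle'' --- the absolute continuity of $|\dot p(t)|$ with respect to capacity --- is left unproven, and your diagnosis of what it requires is off: the sign constraint ${\rm tr}\,\dot p(t)\ge 0$ plays no role here. The argument is short and purely kinematic: in $\R^2$ a Borel set of zero $H^1$-capacity is $\HH^1$-negligible, the measure $|E\dot u(t)|$ vanishes on $\HH^1$-negligible sets by the fine properties of $BD$ functions (\cite[Remark~3.3]{ACDM}), and $\dot p(t)=E\dot u(t)-\dot e(t)$ in $\O$ with $\dot e(t)\in L^2$. No adaptation of the capacitary arguments of \cite{DMDSM} and no use of the Drucker--Prager geometry is needed.

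Second, your passage to the limit in $\int_{\overline\O}\varphi\,\sigma_n{:}\,d\dot p(t)$ rests on a ``uniform local bound on $(P_K\sigma_n)_D$'' which is false: for the cone \eqref{eq:K} one only has $|\tau_D|\le\kappa-\alpha\,{\rm tr}\,\tau$ for $\tau\in K$, so the deviatoric part of elements of $\K$ is unbounded (this is precisely the feature of soil models emphasized in the introduction), and no $L^1(|\dot p(t)|)$-dominating function is available. The correct tool is one-sided: since $P_K\sigma_n\in K$ pointwise and $H$ is the support function of $K$, the integrand $H\bigl(\tfrac{d\dot p(t)}{d|\dot p(t)|}\bigr)-P_K\sigma_n{:}\tfrac{d\dot p(t)}{d|\dot p(t)|}$ is nonnegative, so Fatou's lemma applies without domination; combined with the weak* convergence of $[\sigma_n{:}\dot p(t)]$ and the flow rule \eqref{eq:fr} this yields $\int_\O[H-\tilde\sigma(t){:}\,\cdot\,]\varphi\,d|\dot p(t)|\le 0$, while the reverse pointwise inequality is immediate from $P_K\sigma_n\in K$. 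This squeeze is exactly how the paper concludes. Two further technical remarks: the paper regularizes by ball averages $\sigma_r(x,t)=\fint_{B_r(x)\cap\O}\sigma(y,t)\,dy$, which lie in $K$ automatically by Jensen's inequality (no projection needed) and are defined on all of $\overline\O$; your mollifications live only on $U\subset\subset\O$, whereas the pairing of Definition~\ref{def:duality} and Remark~\ref{product} is global in $\O$, so you would additionally need to localize the duality or extend $\sigma_n$ before invoking its continuity in $L^2\cap\{\div\,\cdot\in L^2\}$.
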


\begin{proof}

Since for a.e.\ $t \in [0,T]$, $\sigma(t) \in L^2(\O;\Ms)$ with $\div \sigma(t) \in L^2(\O;\R^2)$, we have that
\begin{eqnarray*}
&&\sigma_r(t) \to \sigma(t)  \text{ in }  L^2(\O;\Ms),\\
&& \div \sigma_r(t) \to \div \sigma(t)  \text{ in }  L^2(\O;\R^2),
\end{eqnarray*}
as $r \to 0^+$. As a consequence of the definition \eqref{eq:duality} of the stress/strain duality, we infer that, 
\begin{equation}\label{eq:Dprime}
[ \sigma_r(t):\dot p(t)] \wto[ \sigma(t):\dot p(t)] \quad \text{ weakly* in }\mathcal D'(\R^2).
\end{equation}

Since $\Cap(Z_t)=0$, we have $\HH^1(Z_t)=0$. Thus, since $\dot u(t) \in BD(\O)$, by \cite[Remark 3.3]{ACDM} we get that $|E\dot u(t)|(Z_t)=0$, and thus $|\dot p(t)|(Z_t)=0$. It thus follows that for a.e.\ $t \in [0,T]$,
$$\sigma_r(t) \to \tilde \sigma(t) \quad |\dot p(t)|\text{-a.e.\ in }\O,$$
and since $\sigma_r(t)\in \mathcal C(\overline \O;\Ms)$, it results that $\tilde \sigma(t)$ is $|\dot p(t)|$-measurable. Using next that $\sigma_r(x,t) \in K$ for all $t \in [0,T]$ and all $x \in \O$, we get that
$$H\left(\frac{d\dot p(t)}{d|\dot p(t)|}(x) \right) \geq \sigma_r(x,t) : \frac{d\dot p(t)}{d|\dot p(t)|}(x) \quad \text{ for $|\dot p(t)|$-a.e.\ $x \in \O$,}$$
and, passing to the limit as $r \to 0^+$,
\begin{equation}\label{eq:Hgeq2}
H\left(\frac{d\dot p(t)}{d|\dot p(t)|}(x) \right) \geq \tilde \sigma(x,t) : \frac{d\dot p(t)}{d|\dot p(t)|}(x) \quad \text{ for $|\dot p(t)|$-a.e.\ $x \in \O$.}
\end{equation}
According to Fatou's Lemma, we infer that for all $\varphi \in \C^\infty_c(\O)$ with $\varphi \geq 0$,
\begin{multline*}
\int_\O \left[H\left(\frac{d\dot p(t)}{d|\dot p(t)|} \right) - \tilde \sigma(t) : \frac{d\dot p(t)}{d|\dot p(t)|} \right] \varphi\,  d|\dot p(t)|\\
=\int_\O \liminf_{r \to 0^+}\left[H\left(\frac{d\dot p(t)}{d|\dot p(t)|} \right) - \sigma_r(t) : \frac{d\dot p(t)}{d|\dot p(t)|} \right]\varphi\, d|\dot p(t)|\\
\leq \liminf_{r \to 0^+}\int_\O \left[H\left(\frac{d\dot p(t)}{d|\dot p(t)|} \right) - \sigma_r(t) : \frac{d\dot p(t)}{d|\dot p(t)|} \right]\varphi\, d|\dot p(t)|.
\end{multline*}
Since $\sigma_r(t) \in  \mathcal C(\overline \O;\Ms) \cap \mathcal S$, we can use Remark \ref{product} together with the convergence \eqref{eq:Dprime} to get that
\begin{multline}\label{eq:1030}
\int_\O \left[H\left(\frac{d\dot p(t)}{d|\dot p(t)|} \right) - \tilde \sigma(t) : \frac{d\dot p(t)}{d|\dot p(t)|} \right] \varphi\,  d|\dot p(t)|\\
\leq \int_\O \varphi \, d H(\dot p(t)) -  \limsup_{r \to 0^+}\langle[ \sigma_r(t):\dot p(t)],\varphi\rangle \\
=  \int_\O \varphi \, d H(\dot p(t))-\langle[ \sigma(t):\dot p(t)] ,\varphi\rangle=0,
\end{multline}
where we used the flow rule \eqref{eq:fr} in the last equality. Finally, gathering \eqref{eq:Hgeq2} and \eqref{eq:1030}, we obtain that for a.e.\ $t \in [0,T]$,
$$H\left(\frac{d\dot p(t)}{d|\dot p(t)|} \right) = \tilde \sigma(t) : \frac{d\dot p(t)}{d|\dot p(t)|} \quad |\dot p(t)|\text{-a.e.\ in }\O,$$
as required.
\end{proof}

\section*{Acknowledgements}

\noindent The authors would like to thank the hospitality of SISSA, where a large part of this work has been carried out. The research has been supported by the ERC under Grant No.~290888 ``Quasistatic and Dynamic Evolution Problems in Plasticity and Fracture'', 
by GNAMPA under Project 2016 ``Multiscale analysis of complex systems with variational methods",
and by the International Associate Laboratory, Laboratorio Ypazia delle Scienze Matematiche (LYSM), between CNRS and INdAM.


\begin{thebibliography}{99}

\bibitem{AH}
{\sc D.R. Adams, L.I. Hedberg}: {\it Function spaces and potential theory\/}, Springer-Verlag, Berlin (1996).

\bibitem{ACDM}
{\sc L. Ambrosio, A. Coscia, G. Dal Maso}: Fine properties of functions with bounded deformation, {\it Arch. Rational Mech. Anal.} {\bf 139} (1997), 201--238.

\bibitem{ADM}
{\sc L. Ambrosio, G. Dal Maso}: A general chain rule for distributional derivatives, {\it Proc. Amer. Math. Soc.} {\bf 108} (1990), 691--702. 

\bibitem{AFP}
{\sc L. Ambrosio, N. Fusco, D. Pallara}: {\it Functions of bounded variation and free discontinuity problems\/}, Oxford Mathematical Monographs. The Clarendon Press, Oxford University Press, New York (2000).

\bibitem{A}
{\sc G. Anzellotti}: On the existence of the rates of stress and displacements for Prandtl-Reuss plasticity, {\it Quart. Appl. Math.} {\bf 41} (1984), 181--208.

\bibitem{A2}
{\sc G. Anzellotti}: On the extremal stress and displacement in Hencky plasticity, {\it Duke Math. J.} {\bf 51} (1984), 133--147.

\bibitem{AG}
{\sc G. Anzellotti, M. Giaquinta}: Existence of the displacement field for an elasto-plastic body subject to Hencky's law and von Mises yield condition, {\it Manuscr. Math.} {\bf 32} (1980), 101--136.

\bibitem{B}
{\sc J.-F. Babadjian}: Traces of functions of bounded deformation, {\it Indiana Univ. Math. J.} {\bf 64} (2015), 1271--1290.

\bibitem{BM}
{\sc J.-F. Babadjian, M.G. Mora}: Approximation of dynamic and quasi-static evolution problems in elasto-plasticity by cap models, {\it Quart. Appl. Math.} {\bf 73} (2015), 265--316. 

\bibitem{BF}
{\sc A. Bensoussan, J. Frehse}: Asymptotic behaviour of the time dependent Norton-Hoff law in plasticity theory and $H^1$ regularity, 
{\it Comment. Math. Univ. Carolin.} {\bf 37} (1996), 285--304. 

\bibitem{Br}
{\sc H. Br\'ezis}: {\it Operateurs maximaux monotones et semi groupes de contractions dans les espaces de Hilbert\/}, North-Holland, Amsterdam-London; American Elsevier, New York (1973).

\bibitem{DMDSM} 
{\sc G. Dal Maso, A. De Simone, M.G. Mora}: Quasistatic evolution problems for linearly elastic--perfectly plastic materials, {\it Arch. Rational Mech. Anal.} {\bf 180} (2006), 237--291.

\bibitem{DMDSS}
{\sc G. Dal Maso, A. De Simone, F. Solombrino}: Quasistatic evolution for Cam-Clay plasticity: a weak formulation via viscoplastic regularization and time rescaling, {\it Calc. Var. PDEs} {\bf 40} (2011), 125--181.

\bibitem{DM}
{\sc E. Davoli, M.G. Mora}: Stress regularity for a new quasistatic evolution model of perfectly plastic plates,
{\it Calc. Var. PDEs} {\bf 54} (2015), 2581--2614.

\bibitem{DT1}
{\sc F. Demengel, R. Temam}: Convex function of a measure, {\it Indiana Univ. Math. J.} {\bf 33} (1984), 673--709.

\bibitem{DT2}
{\sc F. Demengel, R. Temam}: Convex function of a measure: the unbounded case, FERMAT days 1985: mathematics for optimization, 103--134, North-Holland (1986).

\bibitem{Dem}
{\sc A. Demyanov}: Regularity of stresses in Prandtl-Reuss perfect plasticity, {\it Calc. Var. PDEs} {\bf 34} (2009), 23--72.

\bibitem{DP}
{\sc D.C. Drucker, W. Prager}: Soil mechanics and plastic analysis of rock and concrete, {\it Quart. Appl. Math.} {\bf 10} (1952), 157--175.

\bibitem{FGM}
{\sc G.A. Francfort, A. Giacomini, J.-J. Marigo}: The taming of plastic slips in von Mises elasto-plasticity, {\it Interfaces Free Bound.} {\bf 17} (2015), 497--516.

\bibitem{GM}
{\sc A. Giacomini, M.G. Mora}: In preparation.

\bibitem{GS}
{\sc C. Goffman, J. Serrin}: Sublinear functions of measures and variational integrals, {\it Duke Math. J.} {\bf 31} (1964), 159--178.

\bibitem{KT}
{\sc R.V.  Kohn, R. Temam}: Dual spaces of stresses and strains, with applications to Hencky plasticity,
{\it Appl. Math. Optim.} {\bf 10} (1983), 1--35.

\bibitem{M}
{\sc C. Mifsud}: Short-time regularity for dynamic evolution problems in perfect plasticity. 2016. hal-01370797

\bibitem{Mo}
{\sc M.G. Mora}: Relaxation of the Hencky model in perfect plasticity,
{\it J. Math. Pures Appl.} {\bf 106} (2016), 725--743.

\bibitem{ReSe}
{\sc S. Repin, G. Seregin}: Existence of weak solution of the minimax problem arising in Coulomb-Mohr plasticity, {\it Amer. Math. Soc. Transl.} {\bf 164} (1995), 189--220.

\bibitem{S1}
{\sc G. Seregin}: On the differentiability of extremals of variational problems of the mechanics of ideally elastoplastic media. (Russian) {\it Differentsial'nye Uravneniya} {\bf 23} (1987), 1981--1991.

\bibitem{Se}
{\sc G. Seregin}: On the differentiability of the stress tensor in the Coulomb-Mohr theory of plasticity, {\it St. Petersburg Math. J.} {\bf 4} (1993), 1257--1271.

\bibitem{Suquet}
{\sc P. Suquet}: Sur les \'equations de la plasticit\'e: existence et r\'egularit\'e des solutions, {\it J. M\'ecanique\/} {\bf 20} (1981), 3--39.

\bibitem{Suquet3}
{\sc P. Suquet}: Un espace fonctionnel pour les \'equations de la plasticit\'e, {\it Ann. Fac. Sci. Toulouse Math.} {\bf 1} (1979), 77--87.

\bibitem{T}
{\sc R. Temam}: {\it Mathematical problems in plasticity\/}, Gauthier-Villars, Paris (1985). Translation of {\it Probl\`emes math\'ematiques en plasticit\'e\/}, Gauthier-Villars, Paris (1983).

\end{thebibliography}
\end{document}